\DeclareMathAlphabet{\mathpzc}{OT1}{pzc}{m}{it}
\newcommand{\ep}[1]{\varepsilon_{#1}}
\newcommand{\End}{\mathrm{End}}
\newcommand{\car}{\mathrm{char}\,}
\newcommand{\ann}{\mathrm{ann}}
\newcommand{\kk}{\mathbbm{k}}
\newcommand{\FF}{\mathbbm{F}}
\newcommand{\psmash}[2]{\underline{{#1} \# {#2}}}
\newcommand{\modpar}[2]{({#1},{#2})}
\newcommand{\ppsmash}[2]{\left( \underline{{#1} \# {#2}} \right)}
\newcommand{\dual}[1]{{#1}^\varstar}
\newcommand{\br}[1]{\bar{#1}}
\newcommand{\pH}{\underline{H}}
\newcommand{\pcoH}{\underline{coH}}
\newcommand{\rz}[1]{\sqrt{#1}}
\newcommand{\Hrz}[1]{\sqrt[H]{#1}}
\newcommand{\comp}[2]{{#1} \backslash {#2}}
\newcommand{\mpzc}[1]{\mathpzc{#1}}
\newcommand{\mc}[1]{\mathcal{#1}}
\newcommand{\mf}[1]{\mathfrak{#1}}
\newcommand{\quoc}[2]{{#1} / {#2}}
\newcommand{\fun}[3]{{#1} : {#2} \rightarrow {#3}}
\newcommand{\ffun}[4]{{#1} & \rightarrow & {#2} \\ {#3} & \mapsto & {#4}}
\newcommand{\ffuni}[4]{{#1} & \hookrightarrow & {#2} \\ {#3} & \mapsto & {#4}}
\newcommand{\funn}[5]{{#1} : {#2} & \rightarrow & {#3} \\ {#4} & \mapsto & {#5}}
\newcommand{\conj}[2]{\left\{ {#1} \; : \; {#2} \right\}}
\newcommand{\paren}[1]{\left( {#1} \right)}
\newcommand{\ideal}{\unlhd}
\newcommand{\ts}{\otimes}
\newcommand{\tssp}{\, {\scriptstyle \otimes} \,}
\newcommand{\tsspp}{\, {\scriptscriptstyle \otimes} \,}
\newcommand{\ace}{\blacktriangleright}
\newcommand{\acd}{\blacktriangleleft}
\newcommand{\acind}{\smalltriangleright}
\newcommand{\itm}[1]{\textsf{#1}}
\newcommand{\esp}{\vspace{.2cm}}
\newtheorem{teo}{Theorem}[section]
\newtheorem{lema}[teo]{Lemma}
\newtheorem{cor}[teo]{Corollary}
\newtheorem{prop}[teo]{Proposition}
\newtheorem{defi}[teo]{Definition}
\newtheorem{ex}[teo]{Example}
\newtheorem{remark}[teo]{Remark}
\theoremstyle{definition}
\newtheorem{qt}{Question}
\title[Semiprimitivity and semiprimality of partial smash products]{On the semiprimitivity and the semiprimality problems for  partial smash products}
\author[R. Cavalheiro]{Rafael Cavalheiro}
\address{Instituto de Matem\'{a}tica, Universidade Federal do Rio Grande do Sul, Brazil}
\email{rafael.cavalheiro@ufrgs.br}
\author[A. Sant'Ana]{Alveri Sant'Ana}
\address{Instituto de Matem\'{a}tica, Universidade Federal do Rio Grande do Sul, Brazil}
\email{alveri@mat.ufrgs.br}
\thanks{ The first author was supported by CNPq and CAPES - Brazil. Most  of the results presented here were obtained during the doctoral studies of the first author at the Federal University of Rio Grande do Sul (UFRGS), Brazil}
\subjclass[2010]{16N20, 16N60, 16R99, 16S40, 16T99}
\date{September, 2014.}
\keywords{partial Hopf actions, partial smash product, semiprimitivity, semiprimality, $H$-radicals, partial $(A,H)$-modules }
\begin{document}

\maketitle

\begin{abstract}
 In this paper we discuss about the semiprimitivity and the semiprimality of  partial smash products. Let $H$ be a semisimple Hopf algebra over a field $\kk$ and let $A$ be a left partial $H$-module algebra. We study the $H$-prime and the $H$-Jacobson radicals of $A$ and its relations with the prime and the Jacobson radicals of $\psmash{A}{H}$, respectively. In particular, we prove that if $A$ is $H$-semiprimitive, then $\psmash{A}{H}$ is semiprimitive provided that all irreducible representations of $A$ are finite-dimensional, or $A$ is an affine PI-algebra over $\kk$ and $\kk$ is a perfect field, or $A$ is locally finite. Moreover, we prove that $\psmash{A}{H}$ is semiprime provided that $A$ is an $H$-semiprime PI-algebra, generalizing for the setting of partial actions, the main results of \cite{LMS} and \cite{LM}.
\end{abstract}


\section{Introduction}

Let $H$ be a finite-dimensional Hopf algebra over a field $\kk$ and let $A$ be a left $H$-module algebra. An important question in the theory of Hopf algebra actions is to know when the smash product $A \# H$ is semiprime. It is well known that if $A$ is semiprime, then $A \# H$ is semiprime in the following cases: if $H = \kk G$ and $|G|^{-1} \in \kk$ \cite{FM}; or if $H = \dual{(\kk G)}$ \cite{CM}. In both cases $H$ is semisimple Artinian. This suggests the following question (raised by Cohen and Fishman in \cite{CF}):

\begin{qt} \label{qsemiprimal1}
\emph{Let $H$ be a semisimple Hopf algebra and let $A$ be a semiprime $H$-module algebra. Is the smash product $A \# H$ also semiprime?}
\end{qt}

Many special cases of the Question \ref{qsemiprimal1} have been answered by adding hypotheses on $H$ or on $A$ (e.g. \cite{MS}, \cite{Lo1}, \cite{LMS} and \cite{LM}). In \cite{LMS} the authors tackled this question by studying the stability of the Jacobson radical. From this approach, naturally new related questions arisen about the semiprimitivity and the semiprimality of the smash product:

\begin{qt} \label{qsemiprimit}
\emph{Let $H$ be a  semisimple Hopf algebra over a field $\kk$ and $A$ an $H$-semiprimitive (resp. $H$-semiprime) $H$-module algebra. Is the smash product $A \# H$  semiprimitive (resp. semiprime)?}
\end{qt}

In \cite{LMS} the authors showed that if $\kk$ has characteristic $0$, then the Question \ref{qsemiprimit}  has an affirmative answer for the semiprimitivity case provided that $A$ is a PI-algebra which is either affine or algebraic over $\kk$, or all irreducible representations of $A$ are finite-dimensional, or $A$ is locally finite; if $\kk$ has positive characteristic then additional hypotheses were assumed. In \cite{LM} the authors showed that the answer for the case of semiprimality in Question \ref{qsemiprimit} is `yes' provided that $A$ is a PI-algebra.

Partial group actions were first defined by R. Exel in the context of operator algebras in the study of $\dual{C}$-algebras generated by partial isometries on a Hilbert space \cite{E}. In \cite{DE} partial group actions were defined axiomatically and in \cite{CJ} the authors extended these concepts for the context of partial Hopf actions. Since then many papers were published in this subject and the partial actions became an independent area of research in ring theory. Significant advances were made in this area as, for example, Galois theory (\cite{DFP}, \cite{CJ}), Morita theory (\cite{AB2}, \cite{ADES}) and partial representations (\cite{DE}, \cite{AB}).

In this work we generalize the main results of \cite{LMS} and \cite{LM} for the context of partial Hopf actions. More precisely, we consider a semisimple Hopf algebra $H$ over a field $\kk$ and a left partial $H$-module algebra $A$. We prove that if $A$ is $H$-semiprimitive then $\psmash{A}{H}$ is semiprimitive provided that all irreducible representations of $A$ are finite-dimensional (Theorem \ref{Semiprimitiv1}); or $A$ is an affine PI-algebra over $\kk$ and $\kk$ is a perfect field (Theorem \ref{Semiprimitiv2}); or $A$ is locally finite (Theorem \ref{Semiprimitiv3}). Differently of \cite{LMS} we not suppose additional hypotheses when $\kk$ has positive characteristic (except in the second case before). We also prove that $\psmash{A}{H}$ is semiprime when $A$ is $H$-semiprime and satisfies a polynomial identity (Theorem \ref{semiprimal1}).

For our purposes we study the concepts of $H$-prime and $H$-Jacobson radicals of a partial $H$-module algebra, which arise naturally from the concepts of $H$-stable ideal and partial $\modpar{A}{H}$-modules as a generalization of the concepts of prime and Jacobson radicals of an any algebra. This concepts of $H$-stable ideal and partial $\modpar{A}{H}$-modules are adaptations, for the case of partial actions, of the correspondent concepts of $H$-stable ideal and $\modpar{A}{H}$-modules studied, for example, in \cite{F}, \cite{MS} and \cite{Z} for global actions.


\section{Partial (co)actions and partial smash products}

Throughout this paper $H$ will denote a Hopf algebra over a field $\kk$ (and, unless mentioned otherwise, all algebras will be over the same field $\kk$). Also we use the {\it Sweedler's notation} to denote the comultiplication of an element $h\in H$, that is, $\Delta(h) = \sum h_1 \otimes h_2$. We start recalling some well-known concepts which are fundamentals for this paper. For more details we refer \cite{CJ}.

\esp
\begin{defi} \label{defHmodAlgPar}
A (left) partial action of a Hopf algebra $H$ on an algebra $A$ is a $\kk$-linear map $\xi: H \ts A \to A$, $h \ts a \mapsto h \cdot a$, such that, for any $a,b \in A$ and $h,g \in H$, we have
\begin{itemize}
\item[\itm{(PA1)}] $1_H \cdot a = a$
\item[\itm{(PA2)}] $h \cdot (a(g \cdot b)) = \sum (h_1 \cdot a)((h_2 g) \cdot b).$
\end{itemize}
In this case, $A$ is called a (left) partial $H$-module algebra.
\end{defi}
\esp

Of course, we can define right partial action of $H$ on $A$, but in this paper we will consider only left actions of $H$ on $A$ so that the expression {\it partial $H$-module algebra} will means {\it left partial $H$-module algebra}. The same occur later for coactions, but on the right.

In this paper we will consider only unital algebras. In such a case, the condition $\itm{(PA2)}$ of the Definition \ref{defHmodAlgPar} can be replaced for the following two conditions: for any $a,b \in A$ and $h,g \in H$,
\begin{itemize}
\item[\itm{(PA3)}] $h \cdot (ab) = \sum (h_1 \cdot a)(h_2 \cdot b)$
\item[\itm{(PA4)}] $h \cdot (g \cdot b) = \sum (h_1 \cdot 1_A)((h_2 g) \cdot b).$
\end{itemize}

It is easy to verify that if $A$ is a partial $H$-module algebra, then
$$h \cdot 1_A = \varepsilon(h) 1_A \, , \; \forall \, h \in H \quad \Longleftrightarrow \quad h \cdot (g \cdot a) = (h g) \cdot a \, , \; \forall \, a \in A, \, \forall \, h,g \in H.$$
 When this is the case, $A$ is called an {\it $H$-module algebra} (see \cite[Definition 6.1.1]{DNR}) and $\xi$ is called {\it global action}. On the other hand, is immediate to see that any $H$-module algebra is a partial $H$-module algebra.


\esp
\begin{ex} \label{AcParcInduz1}   \emph{\cite[Proposition 1]{AB}}
Let $B$ be an $H$-module algebra (global action) and let $A$ be a right ideal of $B$ with identity element $1_A$. Then $A$ becomes a partial $H$-module algebra by
$$h \cdot a := 1_A(h \acind a), \quad \forall \, a \in A, \, \forall \, h \in H$$
where $\acind$ indicates the action of $H$ on $B$.
\end{ex}
\esp

As a particular case of this example, the authors in  \cite[p. 5]{AB2}   have considered a finite group $G$ and the Hopf algebra $H = \dual{(\kk G)}$. Then $B = \kk G$ is an $H$-module algebra by
$$p_g \acind h = \delta_{g,h} h \, , \; \forall \, g, h \in G.$$
Now consider a normal subgroup $N \unlhd G$, $N \neq \{ 1_G \}$, such that $\car \kk \notdivides |N|$. Thus, the element
$$e_N = \frac{1}{|N|} \sum_{n \in N} n$$
is a central idempotent in $G$ as it is easy to see.
Hence, the ideal $A = e_N B$ is a unital $\kk$-algebra with $1_A = e_N$ and the action induced of $H$ on $A$ as in Example \ref{AcParcInduz1} is such that, for every $g \in N$,
$$p_g \cdot e_N = e_N (p_g \acind e_N) = (1/|N|) e_N g = (1/|N|) e_N \neq \delta_{1,g} e_N = \varepsilon(p_g) e_N.$$
In particular, the partial action of $H$ on $A$ is not a global action.

\esp
\begin{ex} \label{AcParc1} \emph{\cite[Example 6.1]{DFP}}
Let $B$ be an algebra and let
$$A = B \times B \times B = Be_1 \oplus Be_2 \oplus Be_3,$$
where $e_1 = (1_B, 0, 0)$, $e_2 = (0, 1_B, 0)$ and $e_3 = (0, 0, 1_B)$. If $G = \{ 1_G, g, g^2, g^3 \}$ is the cyclic group of order $4$, then $H = \kk G$ acts partially on $A$ by
$$g \cdot e_1 = 0 \, , \quad g \cdot e_2 = e_1 \, , \quad g \cdot e_3 = e_2,$$
$$g^2 \cdot e_1 = e_3 \, , \quad g^2 \cdot e_2 = 0 \, , \quad g^2 \cdot e_3 = e_1,$$
$$g^3 \cdot e_1 = e_2 \, , \quad g^3 \cdot e_2 = e_3 \, , \quad g^3 \cdot e_3 = 0.$$
\end{ex}
\esp


In the study of partial $H$-module algebras an important subalgebra is the so called \emph{invariant subalgebra}.

\esp
\begin{defi}
Let $A$ be a partial $H$-module algebra. The subalgebra of the invariant elements of $A$ is defined as the set
$$A^{\pH} = \conj{a \in A}{h \cdot a = a(h \cdot 1_A), \, \forall \, h \in H}.$$
\end{defi}
\esp

It is easy to see that $A^{\pH}$ is actually a subalgebra of $A$. When the action of $H$ on $A$ is global
then we denote the invariant subalgebra by $A^H$. If it is the case, we have
$$A^H = \conj{a \in A}{h \cdot a = \varepsilon(h) a, \, \forall \, h \in H}.$$

\esp
\begin{prop} \label{inverso}
Let $A$ be a partial $H$-module algebra. If $a \in A^{\pH}$ is invertible in $A$, then $a^{-1} \in A^{\pH}$.
In particular, we have $J(A)\cap A^{\pH} \subseteq J(A^{\pH})$.
\end{prop}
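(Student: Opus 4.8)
The plan is to establish the two assertions in order, the second being a quick formal consequence of the first together with the usual unit-theoretic description of the Jacobson radical.

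For the first statement, let $a \in A^{\pH}$ be invertible in $A$; the goal is $h \cdot a^{-1} = a^{-1}(h \cdot 1_A)$ for every $h \in H$. I would compute $h \cdot 1_A = h \cdot (a a^{-1})$ and expand by $\itm{(PA3)}$ to obtain $\sum (h_1 \cdot a)(h_2 \cdot a^{-1})$. Since $a \in A^{\pH}$, the identity $g \cdot a = a(g \cdot 1_A)$ holds for every $g \in H$ and therefore survives application of $\Delta$; replacing $h_1 \cdot a$ accordingly gives $\sum a(h_1 \cdot 1_A)(h_2 \cdot a^{-1})$. Now $a$ does not depend on the summation, so it factors to the left, and a second use of $\itm{(PA3)}$ read backwards collapses $\sum (h_1 \cdot 1_A)(h_2 \cdot a^{-1}) = h \cdot (1_A a^{-1}) = h \cdot a^{-1}$. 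Hence $h \cdot 1_A = a(h \cdot a^{-1})$, and left multiplication by $a^{-1}$ yields $h \cdot a^{-1} = a^{-1}(h \cdot 1_A)$, i.e.\ $a^{-1} \in A^{\pH}$.

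For the inclusion $J(A) \cap A^{\pH} \subseteq J(A^{\pH})$, I would first record that $1_A \in A^{\pH}$ (its invariance condition is trivially satisfied) and that $A^{\pH}$ is a unital subalgebra of $A$, so ``unit of $A^{\pH}$'' is meaningful relative to $1_A$. I will invoke the standard description $J(R) = \conj{x \in R}{1_R - rx \text{ is a unit of } R \ \forall\, r \in R}$. So fix $x \in J(A) \cap A^{\pH}$ and an arbitrary $b \in A^{\pH}$. Then $bx \in A^{\pH}$, hence $1_A - bx \in A^{\pH}$; at the same time $b \in A$ and $x \in J(A)$ force $1_A - bx$ to be invertible in $A$. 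By the first part of the proposition its inverse again lies in $A^{\pH}$, so $1_A - bx$ is a unit of $A^{\pH}$. Since $b$ ranges over all of $A^{\pH}$, the quoted description gives $x \in J(A^{\pH})$.

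I do not expect a serious obstacle. The two points deserving a little care are the legitimacy of manipulating the Sweedler sum term by term (justified because the relation defining $A^{\pH}$ is an identity valid for every element of $H$, hence is preserved under $\Delta$ and under composition with $\kk$-linear maps) and the use of the two-sided form of the characterization of $J$, which is the standard one for unital rings. The conceptual content is simply that $A^{\pH}$ is closed under taking inverses of those of its elements that happen to be units of $A$, and that membership in a Jacobson radical is detected purely through units.
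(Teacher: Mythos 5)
Your proof is correct and follows essentially the same route as the paper: both parts hinge on the identical computation using \itm{(PA3)} twice together with the invariance of $a$ (the paper just runs the chain of equalities starting from $h\cdot a^{-1}$ rather than from $h\cdot 1_A$), and the paper's second step likewise observes that $1-\bigl(J(A)\cap A^{\pH}\bigr)$ consists of units of $A^{\pH}$ by the first part. No gaps.
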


\begin{proof}
For any $h \in H$, we have
\begin{eqnarray*}
h \cdot a^{-1} & = & a^{-1} \sum{a (h_{1} \cdot 1_A) (h_2 \cdot a^{-1})} \,\, = \,\, a^{-1} \sum{(h_1 \cdot a)(h_2 \cdot a^{-1})} \\
&=& a^{-1} (h \cdot (a a^{-1})) \,\, = \,\, a^{-1} (h \cdot 1_A)
\end{eqnarray*}
and so $a^{-1} \in A^{\pH}$. As a consequence, $I := J(A) \cap A^{\pH} \,$ is an ideal of $A^{\pH}$ such that $1-I$ is contained in the set of units of $A^{\pH}$. Hence $J(A)\cap A^{\pH} \, \subseteq J(A^{\pH})$.
\end{proof}
\esp

Dualizing the concept of (left) partial $H$-module algebra we obtain the definition of (right) partial $H$-comodule algebra as follows.

\esp
\begin{defi} \label{defHcomodAlgPar}
A (right) partial coaction of a Hopf algebra $H$ on an algebra $R$ is a $\kk$-linear map $\rho: R \to R \ts H$, $x \mapsto \sum x_0 \ts x_1$ such that, for any $x,y \in R$,
\begin{itemize}
\item[\itm{(PC1)}] $(id_R \otimes \varepsilon)(\rho(x)) = x$
\item[\itm{(PC2)}] $\rho(xy) = \rho(x) \rho(y)$
\item[\itm{(PC3)}] $(\rho \otimes id_H)(\rho(x)) = [\rho(1_R) \otimes 1_H][(id_R \otimes \Delta)(\rho(x))]$.
\end{itemize}
In this case, $R$ is called (right) partial $H$-comodule algebra.
\end{defi}
\esp

In the Sweedler's notation,  the above conditions can be written as follows
\begin{itemize}
\item[{\itm{(PC1)}}] $\sum x_0 \varepsilon(x_1) = x$
\item[{\itm{(PC2)}}] $\sum (xy)_0 \ts (xy)_1 = \sum x_0 y_0 \ts x_1 y_1$
\item[{\itm{(PC3)}}] $\sum x_{00} \ts x_{01} \ts x_1 = \sum 1_0 x_0 \ts 1_1 x_{11} \ts x_{12}$.
\end{itemize}
for any $x,y \in R$.

Analogous to the case of partial actions, if $R$ is a partial $H$-comodule algebra and
$$\rho(1_R) = 1_R \ts 1_H,$$
then $R$ is an {\it $H$-comodule algebra} (see \cite[Definition 6.2.1]{DNR}). In this case $\rho$ is called {\it global coaction}. Also, it is immediate that any $H$-comodule algebra is a partial $H$-comodule algebra.

\esp
\begin{defi}
Let $R$ be a partial $H$-comodule algebra. The subalgebra of the coinvariant elements of $R$ is defined as the set
$$R^{\pcoH} = \conj{x \in R}{\rho(x) = (x \otimes 1_H) \rho(1_R) = \sum x 1_0 \ts 1_1}.$$
\end{defi}
\esp

The fact that $R^{\pcoH}$ is a subalgebra of $R$ is an immediate consequence of $\itm{(PC2)}$.When the coaction is
global we denote the coinvariant subalgebra by $R^{coH}$, that is
$$R^{coH} = \conj{x \in R}{\rho(x) = (x \otimes 1_H)}.$$

In the same sense of Proposition \ref{inverso}, we can prove the following result.

\esp
\begin{prop} \label{coinverso}
Let $R$ be a partial $H$-comodule algebra. If $x \in R^{\pcoH}$ is invertible in $R$, then $x^{-1} \in R^{\pcoH}$.
In particular,  we have $J(R)\cap R^{\pcoH} \subseteq J(R^{\pcoH})$.
\end{prop}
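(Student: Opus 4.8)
The plan is to dualize the proof of Proposition \ref{inverso}. Let $x \in R^{\pcoH}$ be invertible in $R$, with inverse $x^{-1}$; the goal is to verify that $\rho(x^{-1}) = (x^{-1} \otimes 1_H)\rho(1_R)$, which is exactly the defining condition for $x^{-1}$ to lie in $R^{\pcoH}$.

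I would argue as follows. Using that $\rho$ is multiplicative (condition \itm{(PC2)}) together with $1_R x^{-1} = x^{-1}$, one gets $\rho(x^{-1}) = \rho(1_R)\rho(x^{-1})$. Now insert the identity $1_R \otimes 1_H = (x^{-1} \otimes 1_H)(x \otimes 1_H)$ of $R \otimes H$ on the left, obtaining $\rho(x^{-1}) = (x^{-1} \otimes 1_H)(x \otimes 1_H)\rho(1_R)\rho(x^{-1})$. Since $x \in R^{\pcoH}$ we have $(x \otimes 1_H)\rho(1_R) = \rho(x)$, so the right-hand side equals $(x^{-1} \otimes 1_H)\rho(x)\rho(x^{-1})$; a final use of \itm{(PC2)} collapses $\rho(x)\rho(x^{-1}) = \rho(xx^{-1}) = \rho(1_R)$, whence $\rho(x^{-1}) = (x^{-1} \otimes 1_H)\rho(1_R)$, as required. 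I do not anticipate a genuine obstacle here: contrary to the partial action case — where one must rewrite $h \cdot a^{-1}$ through \itm{(PA3)}, insert $a^{-1}a$, and only then invoke invariance — the multiplicativity of the partial coaction makes this computation essentially immediate, and the only point to be careful about is tracking on which side the factors $(x \otimes 1_H)$ and $(x^{-1}\otimes 1_H)$ are introduced.

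For the final assertion, set $I := J(R) \cap R^{\pcoH}$. Because $J(R)$ is an ideal of $R$ and $R^{\pcoH}$ is a subalgebra of $R$, the set $I$ is an ideal of $R^{\pcoH}$. If $y \in I$, then $1_R - y \in R^{\pcoH}$ (since $1_R \in R^{\pcoH}$) and $1_R - y$ is a unit of $R$ (since $y \in J(R)$), so by the first part $(1_R - y)^{-1} \in R^{\pcoH}$; hence every element of $1 - I$ is a unit of $R^{\pcoH}$. Since an ideal $I$ of a unital ring for which $1 - I$ is contained in the group of units is always contained in the Jacobson radical, we conclude $J(R) \cap R^{\pcoH} \subseteq J(R^{\pcoH})$.
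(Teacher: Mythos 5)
Your argument is correct and coincides with the paper's own proof: the same chain $\rho(x^{-1}) = (x^{-1}\otimes 1_H)(x\otimes 1_H)\rho(1_R)\rho(x^{-1}) = (x^{-1}\otimes 1_H)\rho(x)\rho(x^{-1}) = (x^{-1}\otimes 1_H)\rho(1_R)$, followed by the standard ``$1-I$ consists of units'' criterion for the radical inclusion. Nothing to add.
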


\begin{proof}
In fact,
\begin{eqnarray*}
\rho(x^{-1}) & = & (1_R \ts 1_H) \rho(1_R x^{-1}) \,\, = \,\, (x^{-1} \ts 1_H)(x \ts 1_H) \rho(1_R) \rho(x^{-1}) \\
& = & (x^{-1} \ts 1_H) \rho(x) \rho(x^{-1}) \,\, = \,\, (x^{-1} \ts 1_H) \rho(x x^{-1}) \,\, = \,\, (x^{-1} \ts 1_H) \rho(1_R).
\end{eqnarray*}
The rest is similar to the proof of Proposition \ref{inverso}.
\end{proof}
\esp

We finish this section recalling the definition of the partial smash product. Let $A$ be a partial $H$-module algebra. On the vector space $A \otimes H$ we consider the following multiplication:
$$(a \ts h)(b \ts g) = \sum{a(h_{1} \cdot b) \ts h_{2}g}, \quad \forall \, a,b \in A, \, \forall \, h,g \in H.$$
With this multiplication (and the usual addition) $A \ts H$ becomes an associative algebra. This algebra is denoted by $A \# H$ and their (generators) elements by $a \# h$ instead of $a \ts h$. In general, $A \# H$ has not identity element unless the action of $H$ on $A$ is global (see \cite[Proposition 6.1.7]{DNR}). Therefore we consider the following subspace of $A \# H$:
$$\psmash{A}{H} := (A \# H)(1_A \# 1_H),$$
generated by elements of the form
$$\psmash{a}{h} := (a \# h)(1_A \# 1_H) = \sum a(h_1 \cdot 1_A) \# h_2.$$
Thus, $\psmash{A}{H}$ is an associative algebra with identity element $\psmash{1_A}{1_H} = 1_A \# 1_H$.

\esp
\begin{defi}
Let $A$ be a partial $H$-module algebra. The algebra $\psmash{A}{H}$ is called the partial smash product of $A$ by $H$.
\end{defi}
\esp

Clearly, if $A$ is an $H$-module algebra (global action) then $\psmash{A}{H} = A \# H$. Moreover,
if $A$ is a partial $H$-module algebra then there is a natural monomorphism of algebras given by
\begin{eqnarray*}
\funn{f}{A}{\psmash{A}{H} \subseteq A \# H}{a}{\psmash{a}{1_H} = a \# 1_H,}
\end{eqnarray*}
and we will identify $A$ with the subalgebra $f(A) = \psmash{A}{1_H} = A \# 1_H$ of $\psmash{A}{H}$.


We further note that the partial smash product $\psmash{A}{H}$ has a structure of (right) $H$-comodule algebra (global coaction) given by
\begin{eqnarray*}
\funn{\rho}{\psmash{A}{H}}{\psmash{A}{H} \otimes H}{\psmash{a}{h}}{\psmash{a}{h_1} \ts h_2,}
\end{eqnarray*}
and that $\paren{\psmash{A}{H}}^{coH} = A$.

The next remark will be useful later.

\esp
\begin{remark} \label{JacA}
Suppose that $A$ is a partial $H$-module algebra. Then the above observation and \emph{Proposition \ref{coinverso}} give us the following interesting result
$$J\!\ppsmash{A}{H} \cap A \,\, \subseteq \,\, J(A).$$
\end{remark}
\esp

It is well known that if $H$ is finite-dimensional, then its dual $\dual{H}$ is a Hopf algebra. In this case, $\psmash{A}{H}$ has a structure of
left $\dual{H}$-module algebra induced by its right $H$-comodule algebra structure, where the (global) action is given by
$$\varphi \acind \paren{\psmash{a}{h}} := \sum \psmash{a}{h_1 \varphi(h_2)}, \qquad a \in A, \, h \in H, \, \varphi \in \dual{H}.$$
Moreover, $\paren{\psmash{A}{H}}^{\dual{H}} = \paren{\psmash{A}{H}}^{coH} = A$ (see \cite[Proposition 6.2.4]{DNR}).
\esp


\section{On $H$-stable ideals and partial $\modpar{A}{H}$-modules} \label{IdHesteAHmodpar}

Hereafter, unless mentioned otherwise, $A$ will denote a partial $H$-module algebra. In \cite{F} and \cite{Z} the authors investigated the concepts of $H$-stable ideal of $A$ and $\modpar{A}{H}$-module in the study of $H$-radicals when the action of $H$ on $A$ was global. These concepts can be adapted for the case of partial actions as we will see later. In the next section, we will explore these concepts in the study of the $H$-prime and $H$-Jacobson radicals of $A$ in the setting of partial actions. In particular, we will establish a link between the $H$-prime radical of $A$ and the $\dual{H}$-prime radical of $\psmash{A}{H}$ and also a link between the $H$-Jacobson radical of $A$ and the $\dual{H}$-Jacobson radical of $\psmash{A}{H}$, when $H$ is finite-dimensional.

We begin with the concept of $H$-stable ideal. The aim here is to establish a link between the set of all $H$-stable ideals of $A$ and the set of all ideals of $\psmash{A}{H}$ which are $H$-subcomodules (Theorem \ref{idProdSmashHcom}). When $H$ is finite-dimensional this last set is just the set of all $\dual{H}$-stable ideals of $\psmash{A}{H}$ (Corollary \ref{idHest2}).

\esp
\begin{defi} \label{defidHest}
Let $A$ be a partial $H$-module algebra. An ideal (resp. left, right ideal) $I$ of $A$ is said to be $H$-stable if $H \cdot I \subseteq I$.
\end{defi}

\vspace{0.1cm}

\begin{remark} \label{obsHestglob}
If the action of $H$ on $A$ is a global one, then the $H$-stable ideals of $A$ are exactly the ideals of $A$ which are $H$-submodules.
\end{remark}
\esp


Let $I$ be an $H$-stable ideal of $A$. Then the partial action of $H$ on $A$ induces a partial action of $H$ on the quotient $\quoc{A}{I}$ given by
$$h \cdot (a + I) := (h \cdot a) + I \, , \qquad a \in A, \, h \in H.$$
Moreover, we have the following algebra isomorphism:
$$\quoc{\paren{\psmash{A}{H}}}{\paren{\psmash{I}{H}}} \cong \psmash{(\quoc{A}{I})}{H} \, ,$$
where $\psmash{I}{H} := \conj{\sum \psmash{x_i}{h_i}}{x_i \in I, \, h_i \in H}$. As a consequence we have the following result.

\esp
\begin{prop} \label{ProdSubdirSmash}
Suppose that the partial $H$-module algebra $A$ is a subdirect product of partial $H$-module algebras $A_{\alpha} \cong A/I_{\alpha}$, where $\{ I_{\alpha} \}$ is a family of $H$-stable ideals of $A$ such that $\bigcap I_{\alpha} = 0$. Then $R := \psmash{A}{H}$ is a subdirect product of the algebras $R_{\alpha} := \psmash{A_{\alpha}}{H}$.
\end{prop}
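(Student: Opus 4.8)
The plan is to exploit the isomorphism $\quoc{\paren{\psmash{A}{H}}}{\paren{\psmash{I}{H}}} \cong \psmash{(\quoc{A}{I})}{H}$ stated just before the proposition, applied to each member of the family $\{I_\alpha\}$. Recall that $A$ being a subdirect product of the $A_\alpha \cong A/I_\alpha$ means precisely that the canonical map $A \to \prod_\alpha A/I_\alpha$ is an injection, equivalently $\bigcap_\alpha I_\alpha = 0$, together with the fact that each projection $A \to A/I_\alpha$ is surjective (which is automatic). So to prove that $R = \psmash{A}{H}$ is a subdirect product of the $R_\alpha = \psmash{A_\alpha}{H}$, it suffices to show two things: first, that for each $\alpha$ there is a surjective algebra homomorphism $\pi_\alpha \colon R \to R_\alpha$, and second, that $\bigcap_\alpha \ker \pi_\alpha = 0$.

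First I would record that, since each $I_\alpha$ is an $H$-stable ideal of $A$, the quotient $A/I_\alpha$ carries the induced partial $H$-action $h \cdot (a + I_\alpha) = (h\cdot a) + I_\alpha$, and hence $\psmash{I_\alpha}{H}$ is a (two-sided) ideal of $\psmash{A}{H}$ with $\quoc{R}{\paren{\psmash{I_\alpha}{H}}} \cong \psmash{(A/I_\alpha)}{H} = R_\alpha$. Composing the quotient map with this isomorphism gives the desired surjection $\pi_\alpha \colon R \twoheadrightarrow R_\alpha$, whose kernel is $\psmash{I_\alpha}{H} = \conj{\sum \psmash{x_i}{h_i}}{x_i \in I_\alpha, \, h_i \in H}$. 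The main point that remains is therefore the computation $\bigcap_\alpha \psmash{I_\alpha}{H} = 0$.

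For that final step I would argue directly on elements. An element of $R$ can be written as $\sum_j \psmash{a_j}{h_j}$ where $\{h_j\}$ can be taken to be part of a $\kk$-basis of $H$; using the explicit form $\psmash{a}{h} = \sum a(h_1\cdot 1_A)\# h_2$ and that $\psmash{A}{H} = (A\# H)(1_A\# 1_H)$ sits inside $A\# H = A\ts H$, one sees that membership of such an element in $\psmash{I_\alpha}{H}$ forces the relevant ``coefficients in $A$'' to lie in $I_\alpha$ — more carefully, $\psmash{I_\alpha}{H} = (I_\alpha \# H)(1_A\#1_H) = (I_\alpha\ts H)\cap \psmash{A}{H}$ as a subspace of $A\ts H$, because $I_\alpha$ is $H$-stable. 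Hence $\bigcap_\alpha \psmash{I_\alpha}{H} = \bigl(\bigcap_\alpha (I_\alpha\ts H)\bigr)\cap \psmash{A}{H} = \bigl((\bigcap_\alpha I_\alpha)\ts H\bigr)\cap \psmash{A}{H} = (0\ts H)\cap\psmash{A}{H} = 0$, using that tensoring over a field with the fixed space $H$ commutes with arbitrary intersections of subspaces of $A$. This yields that $R$ embeds into $\prod_\alpha R_\alpha$ via the $\pi_\alpha$, and since each $\pi_\alpha$ is onto, $R$ is a subdirect product of the $R_\alpha$.

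I expect the only genuine obstacle to be the bookkeeping in identifying $\psmash{I_\alpha}{H}$ with $(I_\alpha\ts H)\cap\psmash{A}{H}$ inside $A\ts H$: one must check that an arbitrary finite sum $\sum_i\psmash{x_i}{h_i}$ with $x_i\in I_\alpha$ really does coincide with the intersection, which uses $H$-stability of $I_\alpha$ (so that $x_i(h_{i1}\cdot 1_A)\in I_\alpha$) in one direction and, in the other, the surjectivity of the multiplication-by-$(1_A\#1_H)$ map together with the fact that $I_\alpha\# H$ is the inverse image of $I_\alpha\ts H$. Once this identification is in place, the intersection computation is immediate from the flatness (indeed freeness) of $H$ as a $\kk$-module. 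Everything else is a formal consequence of the quotient isomorphism already established in the excerpt.
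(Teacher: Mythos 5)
Your proposal is correct and follows essentially the same route as the paper: reduce to showing $\bigcap_\alpha \paren{\psmash{I_\alpha}{H}} = 0$ via the quotient isomorphism, then compute that intersection inside $A \ts H$ using $\bigcap_\alpha (I_\alpha \ts H) = \paren{\bigcap_\alpha I_\alpha} \ts H$. The only cosmetic difference is that you establish the equality $\psmash{I_\alpha}{H} = (I_\alpha\ts H)\cap\psmash{A}{H}$, whereas the paper gets by with the easier inclusion $\psmash{I_\alpha}{H}\subseteq I_\alpha\# H$ and argues directly on a representation $u=\sum x_i\# h_i$ with $\{h_i\}$ linearly independent.
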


\begin{proof}
As above, for every $\alpha$, there is an algebra isomorphism $R/\paren{\psmash{I_{\alpha}}{H}} \cong R_{\alpha}$, thus it is enough to show that $\bigcap \paren{\psmash{I_{\alpha}}{H}} = 0$. On the other hand, for every $\alpha$ we have that $\psmash{I_{\alpha}}{H} \subseteq I_{\alpha} \# H$. Thus, we will have achieved our goal if we show that $\bigcap (I_{\alpha} \# H) = 0$.

Suppose that $u \in \bigcap (I_{\alpha} \# H)$ and write $u = \sum x_i \# h_i$, where $\{ x_i \} \subseteq A$ and $\{ h_i \} \subseteq H$, with $\{ h_i \}$ linearly independent. Then $x_i \in I_{\alpha}$ for every $i$ and every $\alpha$. Thus, $x_i \in \bigcap I_{\alpha} = 0$, for every $i$, and so $u = 0$. Hence $\bigcap (I_{\alpha} \# H) = 0$, as desired.
\end{proof}
\esp

The above proposition will be useful in the study of the semiprimitivity of the partial smash product because a subdirect product of semiprimitive algebras is also a semiprimitive algebra (see \cite[Proposition 2.3.4]{GW}).


Given a subspace $X \subseteq A$, we will denote by
$$(X:H) := \conj{x \in X}{h \cdot x \in X, \, \forall \, h \in H}.$$

\esp
\begin{prop} \label{maioridealHest}
Let $I$ be an ideal of $A$. Then $(I:H)$ is the largest $H$-stable ideal of $A$ contained in $I$. In particular, $I$ is $H$-stable if and only if $(I:H) = I$.
\end{prop}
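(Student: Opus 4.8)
The plan is to verify directly that $(I:H)$ is an $H$-stable ideal, that it is contained in $I$, and that it contains every $H$-stable ideal lying inside $I$; the final ``in particular'' clause is then immediate, since if $I$ itself is $H$-stable it is the largest such ideal contained in $I$, whence $(I:H)=I$, and conversely if $(I:H)=I$ then $I$ is $H$-stable because $(I:H)$ always is.

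First I would check that $(I:H)$ is a two-sided ideal of $A$. It is clearly a $\kk$-subspace, and it is contained in $I$ by definition. To see it absorbs multiplication, take $x\in(I:H)$ and $a\in A$; I must show $ax\in(I:H)$ and $xa\in(I:H)$. For any $h\in H$, using \itm{(PA3)} we get $h\cdot(ax)=\sum(h_1\cdot a)(h_2\cdot x)$. Now $h_2\cdot x\in I$ since $x\in(I:H)$, and $I$ is an ideal of $A$, so each term $(h_1\cdot a)(h_2\cdot x)$ lies in $I$; hence $h\cdot(ax)\in I$. As this holds for every $h$, and $ax\in I$ already (as $x\in I$ and $I$ is an ideal), we conclude $ax\in(I:H)$; the argument for $xa$ is symmetric. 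Thus $(I:H)$ is an ideal of $A$ contained in $I$.

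Next I would show $(I:H)$ is $H$-stable, i.e. $H\cdot(I:H)\subseteq(I:H)$. Let $x\in(I:H)$ and $g\in H$; I must show $g\cdot x\in(I:H)$, that is, $h\cdot(g\cdot x)\in(I:H)$ for all $h\in H$ — but actually it suffices to show $h\cdot(g\cdot x)\in I$ for all $h$, together with $g\cdot x\in I$. The latter holds because $x\in(I:H)$. For the former, apply \itm{(PA4)}: $h\cdot(g\cdot x)=\sum(h_1\cdot 1_A)((h_2 g)\cdot x)$. Since $x\in(I:H)$ and $h_2 g\in H$, we have $(h_2 g)\cdot x\in I$; and $I$ is an ideal, so $(h_1\cdot 1_A)((h_2 g)\cdot x)\in I$, hence the sum is in $I$. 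This shows $g\cdot x\in(I:H)$, so $(I:H)$ is $H$-stable. The key point here — and the only place requiring a little care — is that one must invoke \itm{(PA4)} rather than na\"ively using associativity of the action, precisely because the action is only partial: $h\cdot(g\cdot x)\neq(hg)\cdot x$ in general. This is the step I expect to be the main (mild) obstacle, in the sense that it is where the partial setting genuinely differs from the global one.

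Finally I would show maximality: if $J$ is any $H$-stable ideal of $A$ with $J\subseteq I$, then $J\subseteq(I:H)$. Indeed, for $x\in J$ and any $h\in H$ we have $h\cdot x\in H\cdot J\subseteq J\subseteq I$, so $x$ satisfies the defining condition of $(I:H)$; hence $x\in(I:H)$. This gives $J\subseteq(I:H)$, completing the proof that $(I:H)$ is the largest $H$-stable ideal contained in $I$.
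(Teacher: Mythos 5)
Your proof is correct and follows essentially the same route as the paper: use \itm{(PA3)} to show $(I:H)$ absorbs multiplication by $A$, use \itm{(PA4)} to get $H$-stability (this is exactly the step the paper performs, and your remark that one cannot use associativity of the action in the partial setting is well taken), and conclude maximality by the trivial observation that any $H$-stable ideal inside $I$ satisfies the defining condition of $(I:H)$. No gaps; nothing further needed.
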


\begin{proof}
For every $x \in (I:H)$, $a \in A$ and $h \in H$, we have
$$h \cdot (xa) = \sum (h_1 \cdot x)(h_2 \cdot a) \, \in IA \subseteq I,$$
so $xa \in (I:H)$. Analogously $A(I:H) \subseteq (I:H)$ and therefore, $(I:H)$ is an ideal of $A$.

Moreover, if $x \in (I:H)$ and $g \in H$ then
$$h \cdot (g \cdot x) = \sum (h_1 \cdot 1_A)((h_2 g) \cdot x) \, \in AI \subseteq I, \quad \forall \, h \in H,$$
so $g \cdot x \in (I:H)$. Thus $(I:H)$ is $H$-stable.

Now we observe that $(I:H)$ contains every $H$-stable ideal of $A$ which is contained in $I$ and so the proof is complete.
\end{proof}
\esp


\begin{prop} \label{idHest1}
If $\mc{I}$ is an ideal of $\psmash{A}{H}$, then $\mc{I} \cap A$ is an $H$-stable ideal of $A$.
\end{prop}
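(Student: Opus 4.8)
The plan is to verify the two required properties separately: that $\mc{I}\cap A$ is an ideal of $A$, and that it is $H$-stable.

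The first is immediate. Recall that $A$ is identified with the unital subalgebra $\psmash{A}{1_H}=A\#1_H$ of $\psmash{A}{H}$, and that $1_A\#1_H$ is a common identity. Hence, for $a\in\mc{I}\cap A$ and $b\in A$, the products $(a\#1_H)(b\#1_H)$ and $(b\#1_H)(a\#1_H)$ lie in $\mc{I}$ (because $\mc{I}$ is an ideal of $\psmash{A}{H}$) and in $A\#1_H$ (because $A\#1_H$ is a subalgebra); under the identification this says $ab,ba\in\mc{I}\cap A$. So $\mc{I}\cap A$ is an ideal of $A$.

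For $H$-stability, fix $a\in\mc{I}\cap A$, so that $a\#1_H\in\mc{I}$, and fix $h\in H$. The idea is to ``conjugate'' $a\#1_H$ inside $\psmash{A}{H}$ by the elements $\psmash{1_A}{h_1}$ and $\psmash{1_A}{S(h_2)}$, which both lie in $\psmash{A}{H}=(A\#H)(1_A\#1_H)$. Concretely, set
$$\Phi(h):=\sum \psmash{1_A}{h_1}\,(a\#1_H)\,\psmash{1_A}{S(h_2)}.$$
Since $\mc{I}$ is a two-sided ideal of $\psmash{A}{H}$ and $a\#1_H\in\mc{I}$, each summand lies in $\mc{I}$, hence $\Phi(h)\in\mc{I}$. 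I claim that $\Phi(h)=(h\cdot a)\#1_H$; granting this, $(h\cdot a)\#1_H\in\mc{I}\cap(A\#1_H)$, i.e. $h\cdot a\in\mc{I}\cap A$, which gives $H\cdot(\mc{I}\cap A)\subseteq\mc{I}\cap A$ as desired. To compute $\Phi(h)$ one expands $\psmash{1_A}{k}=\sum(k_1\cdot 1_A)\#k_2$, multiplies out using the product of $A\#H$, and simplifies with \itm{(PA3)}, \itm{(PA4)}, the antipode identity $\sum k_1S(k_2)=\varepsilon(k)1_H$, the fact that $S$ is an anti-morphism of coalgebras, and coassociativity together with the counit axiom. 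After the cancellations coming from $\sum k_1S(k_2)=\varepsilon(k)1_H$ one is left with $\Phi(h)=\bigl(\sum(h_1\cdot 1_A)(h_2\cdot a)(h_3\cdot 1_A)\bigr)\#1_H$, and two applications of \itm{(PA3)} (using $1_Aa=a1_A=a$) collapse the inner sum to $h\cdot a$.

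The only real obstacle is bookkeeping: carrying the several nested Sweedler indices correctly through the evaluation of $\Phi(h)$, and in particular making sure the antipode cancels exactly the intended legs after the coassociativity relabelings. It is reassuring (and a useful sanity check before writing the general computation) to first test the identity on a grouplike element $h=g$, where $\psmash{1_A}{g}=(g\cdot 1_A)\#g$, $S(g)=g^{-1}$, and the computation reduces to $\psmash{1_A}{g}(a\#1_H)\psmash{1_A}{g^{-1}}=(g\cdot a)\#1_H$ using only \itm{(PA3)} and \itm{(PA4)}.
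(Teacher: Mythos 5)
Your proposal is correct and follows essentially the same route as the paper: the paper likewise establishes $H$-stability by showing $\psmash{h\cdot x}{1_H}=\sum\paren{\psmash{1_A}{h_1}}\paren{\psmash{x}{1_H}}\paren{\psmash{1_A}{S(h_2)}}\in\mc{I}$, i.e.\ exactly your conjugation by $\psmash{1_A}{h_1}$ and $\psmash{1_A}{S(h_2)}$. The Sweedler computation you sketch does close up as claimed, so there is nothing to add.
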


\begin{proof}
Clearly $\mc{I} \cap A$ is an ideal of $A$. Moreover, for any $x \in \mc{I} \cap A$ and $h \in H$,
\begin{eqnarray*}
\psmash{h \cdot x}{1_H} &=& \sum \psmash{h_1 \cdot x}{\varepsilon(h_2) 1_H} \\
&=& \sum \psmash{(h_1 \cdot x)(h_2 \cdot 1_A)}{h_3 S(h_4)} \\
&=& \sum (\psmash{h_1 \cdot x}{h_2})(\psmash{1_A}{S(h_3)}) \\
&=& \sum (\psmash{1_A}{h_1})(\psmash{x}{1_H})(\psmash{1_A}{S(h_2)}) \; \in \, \mc{I},
\end{eqnarray*}
so $h \cdot x \, \in \mc{I} \cap A$. Hence $\mc{I} \cap A$ is an $H$-stable ideal of $A$.
\end{proof}
\esp

The next result generalizes, for the setting of partial actions, a known result about global actions (which is a consequence of \cite[Lemma 1.3]{MS}, because if the action of $H$ on $A$ is global, then $A \subseteq A \# H$ is a faithfully flat $H$-Galois extension).

\esp
\begin{teo} \label{idProdSmashHcom}
Let $H$ be a Hopf algebra and let $A$ be a partial $H$-module algebra. Then there exists an one-to-one correspondence between the sets
$$\xymatrix@1{
\{ H \text{-stable ideals of } A \} \ar@<0.5ex>[r]^-{\Phi} & \{ \text{Ideals of } \psmash{A}{H} \text{ which are } H \text{-subcomodules} \} \ar@<0.5ex>[l]^-{\Psi}
}$$
given by $\Phi(I) = \psmash{I}{H}$, where $I$ is an $H$-stable ideal of $A$, and $\Psi(\mc{I}) = \mc{I} \cap A$, where $\mc{I}$ is an ideal of $\psmash{A}{H}$ which is $H$-subcomodule. Moreover, $\Psi = \Phi^{-1}$ and these maps preserve inclusions, sums, (finite) products and intersections.
\end{teo}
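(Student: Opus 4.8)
The plan is to verify that $\Phi$ and $\Psi$ are well-defined, mutually inverse, and order-preserving, treating the four structural properties (sums, products, intersections, inclusions) at the end by routine arguments. First I would check that $\Phi$ lands in the target set: given an $H$-stable ideal $I$ of $A$, the set $\psmash{I}{H}$ is an ideal of $\psmash{A}{H}$ because $(\psmash{a}{h})(\psmash{x}{k}) = \sum \psmash{a(h_1\cdot x)}{h_2 k}$ and $h_1 \cdot x \in I$ by $H$-stability (with the symmetric computation on the other side using that $I$ is a two-sided ideal), and it is an $H$-subcomodule since $\rho(\psmash{x}{h}) = \sum \psmash{x}{h_1}\ts h_2 \in \psmash{I}{H}\ts H$. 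That $\Psi$ is well-defined is precisely Proposition \ref{idHest1} (the hypothesis that $\mc{I}$ be an $H$-subcomodule is not even needed for this direction).

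Next I would prove $\Psi\circ\Phi = \mathrm{id}$, i.e. $\psmash{I}{H}\cap A = I$ for every $H$-stable ideal $I$. The inclusion $\supseteq$ is clear from $\psmash{x}{1_H} = x\#1_H \in \psmash{I}{H}$. For $\subseteq$, note $\psmash{I}{H} \subseteq I\# H$ (each generator $\psmash{x}{h} = \sum x(h_1\cdot 1_A)\#h_2$ lies in $I\#H$ since $I$ is a right ideal), and $(I\#H)\cap(A\#1_H) = I\#1_H$ by comparing coefficients against a basis of $H$ containing $1_H$; identifying $A\#1_H$ with $A$ gives the claim. The reverse composite $\Phi\circ\Psi = \mathrm{id}$, i.e. $\psmash{(\mc{I}\cap A)}{H} = \mc{I}$ for every ideal $\mc{I}$ that is an $H$-subcomodule, is the crux of the argument and I expect it to be the main obstacle. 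The inclusion $\subseteq$ is easy: $\mc{I}\cap A \subseteq \mc{I}$ and $\mc{I}$ is a right ideal, so $(\mc{I}\cap A)\#H$ multiplied into $(1_A\#1_H)$ stays in $\mc{I}$. For $\supseteq$ one must recover, from an element $u\in\mc{I}$, enough ``scalar'' pieces lying in $\mc{I}\cap A$. Here is where the $H$-subcomodule hypothesis is essential and where the finite-dimensionality of $H$ would normally be invoked in the global case via the $\dual{H}$-action: applying the $\dual{H}$-action $\varphi\acind(\psmash{a}{h}) = \sum\psmash{a}{h_1\varphi(h_2)}$ to $u\in\mc{I}$ keeps us in $\mc{I}$ (since $\mc{I}$ being an $H$-subcomodule makes it a $\dual{H}$-submodule), and for suitable choices of $\varphi$ we can project $u$ onto $(\psmash{A}{H})^{coH} = A$, landing in $\mc{I}\cap A$; then an averaging/trace argument over a basis of $H$ reconstitutes $u$ from such projections. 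For general (possibly infinite-dimensional) $H$ one argues directly with the comodule structure: writing $\rho(u) = \sum u_0\ts u_1$ with the $u_1$ linearly independent, the components $u_0$ can be shown to lie in $\mc{I}$, and iterating/combining with $(id\ts\varepsilon)\rho = id$ extracts the required elements of $\mc{I}\cap A$.

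Finally, once the bijection is established, preservation of inclusions is immediate from the definitions of $\Phi$ and $\Psi$; preservation of intersections follows since $\Psi$ visibly commutes with intersections and $\Phi = \Psi^{-1}$; preservation of sums and of (finite) products reduces, via the bijection, to the identities $\psmash{(I+J)}{H} = \psmash{I}{H} + \psmash{J}{H}$ and $\psmash{(IJ)}{H} = (\psmash{I}{H})(\psmash{J}{H})$, which are checked on generators using the multiplication rule of $\psmash{A}{H}$ together with the $H$-stability of $I$ and $J$ (the product identity uses that $h\cdot 1_A$ absorbs into $I$ or $J$ appropriately). The hard part, as noted, is the surjectivity clause $\Phi\circ\Psi=\mathrm{id}$; everything else is bookkeeping with the partial smash product relations.
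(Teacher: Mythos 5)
Your proposal follows the paper's architecture almost everywhere: the well-definedness checks, the proof of $\Psi\circ\Phi=\mathrm{id}$ via a basis of $H$ containing $1_H$, and the reduction of the four structural properties (direct verification for $\Phi$ on sums and for $\Psi$ on intersections, transferred to the other map through the bijection; products checked on generators, equivalently via $\psmash{I}{H}=I\paren{\psmash{A}{H}}$) all coincide with what the paper does. However, there is a genuine gap at the step you yourself flag as the crux, the inclusion $\mc{I}\subseteq\psmash{(\mc{I}\cap A)}{H}$, and neither of your two proposed mechanisms closes it. The identity $(id\ts\varepsilon)\rho=\mathrm{id}$ only returns $u=\sum u_0\varepsilon(u_1)$, i.e.\ $u$ itself; it produces no element of $\paren{\psmash{A}{H}}^{coH}=A$, so it cannot ``extract the required elements of $\mc{I}\cap A$.'' The integral-averaging route presupposes $H$ finite-dimensional (the theorem is stated for an arbitrary Hopf algebra $H$), and even in that case you do not explain how $u$ is reconstituted from the projections $t\acind(u\,r)$ --- a Galois-type surjectivity assertion that is not obviously available in the partial setting.

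The missing idea is an explicit antipode factorization. For $u=\sum\psmash{x_i}{h_i}\in\mc{I}$ one writes
$$\sum \psmash{x_i}{h_i} \;=\; \sum \paren{\psmash{x_i(h_{i1}\cdot 1_A)}{1_H}}\paren{\psmash{1_A}{h_{i2}}} \;=\; \sum\paren{\psmash{x_i}{h_{i11}}}\paren{\psmash{1_A}{S(h_{i12})}}\paren{\psmash{1_A}{h_{i2}}}.$$
The subcomodule hypothesis gives $\sum\psmash{x_i}{h_{i11}}\ts h_{i12}\ts h_{i2}\in\mc{I}\ts H\ts H$; multiplying the first two legs yields $\sum\psmash{x_i(h_{i1}\cdot 1_A)}{1_H}\ts h_{i2}\in(\mc{I}\cap A)\ts H$ (the first leg lands in $A$ because the $h$-part collapses to $1_H$, and in $\mc{I}$ because $\mc{I}$ is a right ideal), and multiplying in the last leg exhibits $u$ as an element of $(\mc{I}\cap A)\paren{\psmash{A}{H}}=\psmash{(\mc{I}\cap A)}{H}$. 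This single computation is what replaces both your $\dual{H}$-averaging and your $(id\ts\varepsilon)$ argument, and it works uniformly for arbitrary $H$. Everything else in your write-up is correct and essentially identical to the paper's proof.
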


\begin{proof}
We will denote by $R := \psmash{A}{H}$. Let $I$ be an $H$-stable ideal of $A$ and $\mc{I}$ an ideal of $R$ which is $H$-subcomodule. For any $a,b \in A$, $x \in I$ and $h,g,k \in H$, we have
$$(\psmash{a}{h})(\psmash{x}{g})(\psmash{b}{k}) \, = \, \sum \psmash{a(h_1 \cdot x)((h_2 g_1) \cdot b)}{h_3 g_2 k} \,\, \in \, \psmash{(A(H \cdot I)A)}{H} \subseteq \psmash{I}{H}.$$
Thus $R \! \ppsmash{I}{H} \!\! R \subseteq \psmash{I}{H}$ and so $\psmash{I}{H}$ is an ideal of $R$. Moreover, by Proposition \ref{idHest1} and using that $\psmash{I}{H}$ is an $H$-subcomodule of $R$ we can deduce that $\Phi$ and $\Psi$ are well defined.

Now consider a basis $\{ h_i \}$ of $H$ containing $1_H$. If $u \in \psmash{I}{H} \cap A \subseteq (I \# H) \cap A$, write $u = \sum x_i \# h_i$ with $\{ x_i \} \subseteq A$. Since $\{ h_i \}$ is a basis of $H$,
it follows that $\{ x_i \} \subseteq I$. On the other hand, $1_H \in \{ h_i \}$ and $u \in A$, so $x_i = 0$ if $h_i \neq 1_H$ and therefore $u \in I$. This shows that $\psmash{I}{H} \cap A \subseteq I$. The other inclusion is clear and so it follows that
$\Psi(\Phi(I)) = I$.

To prove that $\Phi (\Psi (\mc{I})) = \mc{I}$, fix  $\sum \psmash{x_i}{h_i} \in \mc{I}$. Then we have
\begin{eqnarray*}
\sum \psmash{x_i}{h_i} & = & \sum \big( \psmash{x_i(h_{i1} \cdot 1_A)}{1_H} \big) \paren{\psmash{1_A}{h_{i2}}} \\
& = & \sum \paren{\psmash{x_i}{h_{i11}}} \big( \psmash{1_A}{S(h_{i12})} \big) \paren{\psmash{1_A}{h_{i2}}}.
\end{eqnarray*}
Since $\mc{I}$ is an $H$-subcomodule of $R$, $\sum \psmash{x_i}{h_{i11}} \ts h_{i12} \ts h_{i2} \; \in \, \mc{I} \ts H \ts H$, so $\sum \psmash{x_i}{h_i} = \sum \big( \psmash{x_i(h_{i1} \cdot 1_A)}{1_H} \big) \paren{\psmash{1_A}{h_{i2}}} \; \in \, (\mc{I} \cap A) \paren{\psmash{A}{H}} = \psmash{(\mc{I} \cap A)}{H}.$
This shows that $\mc{I} \subseteq \psmash{(\mc{I} \cap A)}{H}$ and the other inclusion is clear. Hence $\mc{I} = \psmash{(\mc{I} \cap A)}{H} = \Phi(\Psi(\mc{I}))$ and so $\Phi$ and $\Psi$ are bijections with $\Psi = \Phi^{-1}$.

Now we need to prove that $\Phi$ and $\Psi$ preserve inclusions, sums, (finite) products and intersections. For inclusions, this statement is clear from definitions of $\Phi$ and $\Psi$. With regard to the others, firstly note that sums, finite products and intersections of $H$-stable ideals of $A$ (resp. $H$-subcomodules of $\psmash{A}{H}$) are $H$-stable ideals  of $A$ (resp. $H$-subcomodules of $\psmash{A}{H}$).   Now fix a family $\{ I_{\alpha} \}_{\alpha \in \Lambda}$ of $H$-stable ideals of $A$ and a family $\{ \mc{I}_{\beta} \}_{\beta \in \Gamma}$ of ideals of $\psmash{A}{H}$ which are $H$-subcomodules. Clearly
$$\Phi \paren{\sum I_{\alpha}} = \psmash{ \paren{{\textstyle \sum} I_{\alpha}}}{H} = \sum \psmash{I_{\alpha}}{H} = \sum \Phi(I_{\alpha})$$
and  consequently,
$$\Psi \paren{\sum \mc{I}_{\beta}} = \Psi \paren{\sum \Phi(\Psi(\mc{I}_{\beta}))} = \Psi \paren{\Phi \paren{\sum \Psi(\mc{I}_{\beta})}} = \sum \Psi(\mc{I}_{\beta}).$$
Therefore $\Phi$ and $\Psi$ preserve sums.

Now observe that if $I$ is a $H$-stable ideal of $A$, then $\psmash{I}{H} = IR$ and so $IR$ is an ideal of $R$. In fact, the inclusion $IR \subseteq \psmash{I}{H}$ is clear. On the other hand, for any $x \in I$ and $h \in H$, $\psmash{x}{h} = (\psmash{x}{1_H})(\psmash{1_A}{h}) \; \in IR$ and also $\psmash{I}{H} \subseteq IR$. Thus, for any $\alpha_1, \alpha_2 \in \Lambda$,
\begin{eqnarray*}
\Phi(I_{\alpha_1} I_{\alpha_2}) & = & \psmash{(I_{\alpha_1} I_{\alpha_2})}{H} \, = \, (I_{\alpha_1} I_{\alpha_2}) R \, = \, I_{\alpha_1} (I_{\alpha_2} R) \, = \, I_{\alpha_1} (R I_{\alpha_2} R) \\
& = & (I_{\alpha_1} R)(I_{\alpha_2} R) \, = \, (\psmash{I_{\alpha_1}}{H})(\psmash{I_{\alpha_2}}{H}) \, = \, \Phi(I_{\alpha_1}) \Phi(I_{\alpha_2})
\end{eqnarray*}
Consequently, for any $\beta_1, \beta_2 \in \Gamma$,
$$\Psi(\mc{I}_{\beta_1} \mc{I}_{\beta_2}) \, = \, \Psi \Big( \Phi \big( \Psi(\mc{I}_{\beta_1}) \big) \, \Phi \big( \Psi(\mc{I}_{\beta_2}) \big) \Big) \, = \, \Psi \Big( \Phi \Big( \Psi(\mc{I}_{\beta_1}) \, \Psi(\mc{I}_{\beta_2}) \Big) \Big) \, = \,  \Psi(\mc{I}_{\beta_1}) \Psi(\mc{I}_{\beta_2}).$$
Hence, for finite products of $H$-stable ideals of $A$ or ideal of $R$ which are $H$-subcomodules, the result follows by induction.

Finally, we show that $\Phi$ and $\Psi$ preserve intersections. Clearly
$$\Psi \paren{\bigcap \mc{I}_{\beta}} = \paren{\bigcap \mc{I}_{\beta}} \cap A = \bigcap (\mc{I}_{\beta} \cap A) = \bigcap \Psi(\mc{I}_{\beta}),$$
and it follows that
$$\Phi \paren{\bigcap I_{\alpha}} = \Phi \paren{\bigcap \Psi(\Phi(I_{\alpha}))} = \Phi \paren{\Psi \paren{\bigcap \Phi(I_{\alpha})}} = \bigcap \Phi(I_{\alpha}).$$
The proof is complete now.
\end{proof}
\esp

If $H$ is finite-dimensional, since the action of $\dual{H}$ on $\psmash{A}{H}$ is global, then a subspace of $\psmash{A}{H}$ is an $H$-subcomodule if and only if it is an $\dual{H}$-submodule (see \cite[Lemma 1.6.4]{M}). Thus, our next result follows by Remark \ref{obsHestglob} .

\esp
\begin{cor} \label{idHest2}
Let $H$ be a finite-dimensional Hopf algebra and let $A$ be a partial $H$-module algebra. There exists an one-to-one correspondence between the sets
$$\xymatrix@1{
\{ H \text{-stable ideals of } A \} \ar@<0.5ex>[r]^-{\Phi} & \{ \dual{H} \text{-stable ideals of } \psmash{A}{H} \} \ar@<0.5ex>[l]^-{\Psi}
}$$
given by $\Phi(I) = \psmash{I}{H}$, where $I$ is an $H$-stable ideal of $A$, and $\Psi(\mc{I}) = \mc{I} \cap A$, where $\mc{I}$ is an $\dual{H}$-stable ideal of $\psmash{A}{H}$. Moreover $\Psi = \Phi^{-1}$ and these bijections preserve inclusions, sums, (finite) products and intersections.
\end{cor}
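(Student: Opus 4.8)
The plan is to deduce this corollary directly from Theorem \ref{idProdSmashHcom}, by identifying the target set appearing there (ideals of $\psmash{A}{H}$ which are $H$-subcomodules) with the set of $\dual{H}$-stable ideals of $\psmash{A}{H}$.

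First I would invoke the observations made at the end of Section 2: since $H$ is finite-dimensional, $\dual{H}$ is a Hopf algebra, and the global right $H$-comodule algebra structure on $\psmash{A}{H}$ induces a \emph{global} left $\dual{H}$-module algebra structure on $\psmash{A}{H}$, with $\paren{\psmash{A}{H}}^{\dual{H}} = A$. The standard duality between finite-dimensional comodules and modules (see \cite[Lemma 1.6.4]{M}) then tells us that a subspace $\mc{I} \subseteq \psmash{A}{H}$ is an $H$-subcomodule if and only if it is an $\dual{H}$-submodule. Hence an ideal $\mc{I}$ of $\psmash{A}{H}$ is an $H$-subcomodule precisely when it is an ideal of $\psmash{A}{H}$ that is also an $\dual{H}$-submodule.

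Next, because the $\dual{H}$-action on $\psmash{A}{H}$ is global, Remark \ref{obsHestglob} applies verbatim: the $\dual{H}$-stable ideals of $\psmash{A}{H}$ are exactly the ideals of $\psmash{A}{H}$ which are $\dual{H}$-submodules. Combining this with the previous identification, the set of ideals of $\psmash{A}{H}$ which are $H$-subcomodules coincides with the set of $\dual{H}$-stable ideals of $\psmash{A}{H}$. Substituting this equality of sets into Theorem \ref{idProdSmashHcom} gives at once the asserted bijection $\Phi \colon \{ H\text{-stable ideals of } A \} \to \{ \dual{H}\text{-stable ideals of } \psmash{A}{H} \}$, with inverse $\Psi = \Phi^{-1}$, defined by the same formulas $\Phi(I) = \psmash{I}{H}$ and $\Psi(\mc{I}) = \mc{I} \cap A$; and the preservation of inclusions, sums, finite products, and intersections is inherited directly from that theorem.

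There is essentially no real obstacle here, since all the substantive work is already contained in Theorem \ref{idProdSmashHcom}. The only points requiring care are that the two translations used — comodule $\leftrightarrow$ module, and ($\dual{H}$-submodule ideal) $\leftrightarrow$ ($\dual{H}$-stable ideal) — are valid precisely because $H$ is finite-dimensional and because the $\dual{H}$-action on $\psmash{A}{H}$ is global, respectively, both of which are in force in the present setting.
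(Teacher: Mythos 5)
Your proposal is correct and follows essentially the same route as the paper: the authors also deduce the corollary from Theorem \ref{idProdSmashHcom} by noting that, since the $\dual{H}$-action on $\psmash{A}{H}$ is global, the $H$-subcomodule ideals are exactly the $\dual{H}$-submodule ideals (via \cite[Lemma 1.6.4]{M}) and then applying Remark \ref{obsHestglob}. No issues.
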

\esp


Now we present the definition of partial $\modpar{A}{H}$-modules. This concept is, in a certain sense, a generalization of the concept of $A$-module and it allows us to better understand the relation between $A$ and $\psmash{A}{H}$.

\esp
\begin{defi} \label{defAHmod}
Let $A$ be a (left) partial $H$-module algebra. A right (resp. left) partial $\modpar{A}{H}$-module is a right (resp. left) $A$-module $M$ with a $\kk$-linear map $M \ts H \rightarrow M$, $m \tssp h \mapsto m \acd h$ (resp. $H \ts M \rightarrow M$, $h \tssp m \mapsto h \ace m$) such that, for any $m \in M$, $a \in A$ and $h,g \in H$, we have
\begin{itemize}
\item[\itm{(PM1)}] $m \acd 1_H = m \qquad (\text{resp.} \;\; 1_H \ace m = m)$
\item[\itm{(PM2)}] $((m \acd h)a) \acd g = \sum (m(h_1 \cdot a)) \acd (h_2 g) \qquad (\text{resp.} \;\; h \ace (a(g \ace m)) = \sum (h_1 \cdot a)((h_2 g) \ace m))$.
\end{itemize}
\end{defi}
\esp

Clearly, the conditions $\itm{(PM1)}$ and $\itm{(PM2)}$ are equivalent to $\itm{(PM1)}$ and the following two conditions: for any $m \in M$, $a \in A$ and $h,g \in H$,
\begin{itemize}
\item[\itm{(PM3)}] $(m \acd h)a = \sum (m(h_1 \cdot a)) \acd h_2 \qquad(\text{resp.} \;\; h \ace (am)) = \sum (h_1 \cdot a)(h_2 \ace m))$
\item[\itm{(PM4)}] $(m \acd h) \acd g = \sum (m(h_1 \cdot 1_A)) \acd (h_2 g) \qquad (\text{resp.} \;\; h \ace (g \ace m) = \sum (h_1 \cdot 1_A)((h_2 g) \ace m))$.
\end{itemize}

\esp
\begin{remark}
It is worth noting here that the apparent lack of symmetry between the definitions of right and left partial $\modpar{A}{H}$-module occurs because $H$ acts on $A$ by the left. For this reason, in some times, the argumentations in the proofs of our results about left and right partial $\modpar{A}{H}$-modules are quite different. For example, if $V$ is an $A$-submodule of a right partial $\modpar{A}{H}$-module $M$, then the partial $\modpar{A}{H}$-submodule of $M$ generated by $V$ is $(VA) \acd H = V \acd H$, but if $M$ is a left partial $\modpar{A}{H}$-module then the partial $\modpar{A}{H}$-submodule of $M$ generated by $V$ is $A(H \ace V)$.
\end{remark}
\esp

The Proposition \ref{AHmod1} will justify our interest in these objects and provides examples of right partial $\modpar{A}{H}$-modules. If $A$ is a (left) partial $H$-module algebra, then $A$ itself is a left partial $\modpar{A}{H}$-module. More generally, if $I$ is a $H$-stable left ideal of $A$ then $I$ and $A/I$ are left partial $\modpar{A}{H}$-modules. In the next section we will see how it is possible to build a partial $\modpar{A}{H}$-module from a given $A$-module.


As usual, we denote by $\ann_A(M)$ (or simply $\ann(M)$) the annihilator of $M$. We recall that $M$ is called faithful if $\ann(M) = 0$. The next result says about the annihilator of a partial $\modpar{A}{H}$-module.

\esp
\begin{prop} \label{anulHideal}
If $M$ is a partial $\modpar{A}{H}$-module then $\ann(M)$ is an $H$-stable ideal of $A$.
\end{prop}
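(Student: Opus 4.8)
The plan is to verify directly that $\ann(M)$ is a left ideal, a right ideal, and $H$-stable, using the defining identities of a partial $\modpar{A}{H}$-module together with the axioms \itm{(PA3)}--\itm{(PA4)} of the partial action. I will treat the case of a right partial $\modpar{A}{H}$-module; the left case is entirely analogous (using \itm{(PM3)}, \itm{(PM4)} in their ``resp.'' forms) and can be dispatched with a single remark.

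First I would recall that $\ann(M) = \{ a \in A : Ma = 0 \}$ is automatically an ideal of $A$, since $M$ is in particular a right $A$-module: if $Ma = 0$ then $M(ba) = (Mb)a \subseteq Ma = 0$ and $M(ab) = (Ma)b = 0$ for all $b \in A$. So the only real content is $H$-stability, i.e. that $h \cdot a \in \ann(M)$ whenever $a \in \ann(M)$ and $h \in H$.

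For this, fix $a \in \ann(M)$, $h \in H$, and $m \in M$; I want $m(h \cdot a) = 0$. The key computation uses \itm{(PM3)} backwards. Apply \itm{(PM3)} with the element $a$ and $h$: $(m \acd h)a = \sum (m(h_1 \cdot a)) \acd h_2$. The left-hand side is $(m \acd h)a$, and since $m \acd h \in M$ and $a \in \ann(M)$, this is $0$. Hence $\sum (m(h_1 \cdot a)) \acd h_2 = 0$ in $M$. Now apply the counit trick: acting by $\varepsilon$ (i.e. using \itm{(PM1)} after applying $\mathrm{id}\ts\varepsilon$, which is legitimate because $\acd$ is $\kk$-linear and $m \acd 1_H = m$) collapses $\sum (m(h_1 \cdot a)) \acd h_2$ to $m(h \cdot a)$. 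More precisely, $m(h \cdot a) = m \big( (\sum h_1 \varepsilon(h_2)) \cdot a \big) = \sum \varepsilon(h_2)\, m(h_1 \cdot a) = \sum (m(h_1 \cdot a)) \acd (\varepsilon(h_2) 1_H)$; but by \itm{(PM4)} or simply by linearity and \itm{(PM1)}, $\sum (m(h_1 \cdot a)) \acd (\varepsilon(h_2)1_H)$ equals the image of $\sum (m(h_1 \cdot a)) \acd h_2$ under $\mathrm{id}_M \ts \varepsilon$ composed with $(- \acd 1_H)$, which is $0$. Therefore $m(h \cdot a) = 0$ for all $m$, i.e. $h \cdot a \in \ann(M)$, and $\ann(M)$ is $H$-stable.

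I do not expect a serious obstacle here; the statement is essentially bookkeeping. The one subtlety worth getting right is the ``counit collapse'' step: one must be a little careful to justify that applying $\varepsilon$ to the $H$-slot of $\sum (m(h_1 \cdot a)) \acd h_2$ really does recover $m(h\cdot a)$, which is where \itm{(PM1)} (and implicitly the coassociativity/counit axioms of $H$) enter. After that, I would add a one-line remark that for a left partial $\modpar{A}{H}$-module the same argument runs with \itm{(PM3)} in its ``resp.'' form: from $h \ace (am) = \sum (h_1 \cdot a)(h_2 \ace m)$ with $a \in \ann(M)$ one gets $\sum (h_1 \cdot a)(h_2 \ace m) = 0$, and applying $\varepsilon$ to the first tensor factor yields $(h\cdot a)m = 0$.
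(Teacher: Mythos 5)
Your reduction to the $H$-stability claim is right, and the ideal part is indeed routine, but the ``counit collapse'' step at the heart of your argument is a genuine gap. From \itm{(PM3)} and $a \in \ann(M)$ you correctly obtain that the element $\xi(u) = 0$, where $\xi : M \ts H \to M$ is the action map and $u = \sum (m(h_1 \cdot a)) \ts h_2 \in M \ts H$. You then want to conclude that $(\mathrm{id}_M \ts \varepsilon)(u) = \sum \varepsilon(h_2)\, m(h_1 \cdot a) = m(h \cdot a)$ also vanishes. But $\xi$ and $\mathrm{id}_M \ts \varepsilon$ are two different linear maps $M \ts H \to M$, and the vanishing of one of them on a particular element of $M \ts H$ says nothing about the other; you only know that the \emph{image of $u$ under the action} is zero, not that $u$ itself is zero in $M \ts H$. (For instance, with a global action of $\kk G$, the element $u = m \ts g - (m \acd g) \ts 1_G$ satisfies $\xi(u) = 0$ while $(\mathrm{id}_M \ts \varepsilon)(u) = m - m \acd g$ is generally nonzero.) So the step ``acting by $\varepsilon$ collapses $\sum (m(h_1 \cdot a)) \acd h_2$ to $m(h \cdot a)$'' is a non sequitur, and nothing in \itm{(PM1)} or linearity rescues it.

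The repair is to bring in the antipode \emph{before} invoking the module axioms, which is what the paper does. Write
$$m(h \cdot a) \;=\; \sum \big( m(h_1 \cdot a) \big) \acd \big( \varepsilon(h_2) 1_H \big) \;=\; \sum \big( m(h_1 \cdot a) \big) \acd \big( h_2 S(h_3) \big),$$
and now read \itm{(PM2)} from right to left with $g = S(h_2)$ to get $\sum \big( (m \acd h_1) a \big) \acd S(h_2)$, which lies in $(Ma) \acd H = 0$ because $a$ annihilates $M$. The same device handles the left case: $(h \cdot a)m = \sum (h_1 \cdot a)\big( (h_2 S(h_3)) \ace m \big) = \sum h_1 \ace \big( a (S(h_2) \ace m) \big) \in H \ace (aM) = 0$. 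Your instinct that \itm{(PM3)} kills $(m \acd h)a$ is the right raw material, but the passage back to $m(h \cdot a)$ must go through $\varepsilon = \mu \circ (\mathrm{id} \ts S) \circ \Delta$ rather than through an illegitimate application of $\varepsilon$ inside the action.
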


\begin{proof}
It is clear that $\ann(M)$ is an ideal of $A$. Moreover, for any $m \in M$, $x \in \ann(M)$ and $h \in H$, we have
\begin{eqnarray*}
m(h \cdot x) & = & \sum (m(h_1 \cdot x)) \acd (\varepsilon(h_2) 1_H) \\
& = & \sum (m(h_1 \cdot x)) \acd (h_2 S(h_3)) \\
& = & \sum ((m \acd h_1)x) \acd S(h_2) \; \in \; (Mx) \acd H = 0
\end{eqnarray*}
if $M$ is a right partial $\modpar{A}{H}$-module and also
\begin{eqnarray*}
(h \cdot x)m & = & \sum (h_1 \cdot x)((\varepsilon(h_2) 1_H) \ace m) \\
& = & \sum (h_1 \cdot x)((h_2 S(h_3)) \ace m) \\
& = & \sum h_1 \ace (x(S(h_2) \ace m)) \; \in \; H \ace (xM) = 0
\end{eqnarray*}
if $M$ is a left partial $\modpar{A}{H}$-module. Thus, in any case, $h \cdot x \in \ann(M)$. Hence $\ann(M)$ is $H$-stable.
\end{proof}
\esp

It is clear that if $M$ is a right (resp. left) partial $\modpar{A}{H}$-module and $I$ is an $H$-stable ideal of $A$ such that $I \subseteq \ann(M)$, then $M$ has a natural structure of right (resp. left) partial $\modpar{A/I}{H}$-module.

\esp
\begin{prop} \label{AHmod1}
If $M$ is a right (resp. left) partial $\modpar{A}{H}$-module then $M$ is a right (resp. left) $\psmash{A}{H}$-module, with action given by
$$m(\psmash{a}{h}) := (ma) \acd h \quad \, (\text{resp.} \;\; (\psmash{a}{h})m := a(h \ace m)), \quad \, m \in M, \, a \in A, \, h \in H.$$
Conversely, if $M$ is a right (resp. left) $\psmash{A}{H}$-module then $M$ is a right (resp. left) partial $\modpar{A}{H}$-module, with actions given by
\begin{eqnarray*}
ma := m(\psmash{a}{1_H}) &  & (\text{resp. } am := (\psmash{a}{1_H})m), \quad \, m \in M, \, a \in A \\
m \acd h := m(\psmash{1_A}{h}) &  & (\text{resp. } h \ace m := (\psmash{1_A}{h})m), \quad \, m \in M, \, h \in H.
\end{eqnarray*}
Moreover, in both cases we have that $\ann_A(M) \, = \, \ann_{\psmash{A}{H}}(M) \, \cap \, A$.
\end{prop}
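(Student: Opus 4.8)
The plan is to recognize $M$, in the first assertion, as a module over the (generally non-unital) algebra $A\#H$ and then to restrict scalars to its unital subalgebra $\psmash{A}{H} = (A\#H)(1_A\#1_H)$; the converse will be obtained by setting $ma := m(\psmash{a}{1_H})$ and $m\acd h := m(\psmash{1_A}{h})$ (and the left analogues) and verifying the partial module axioms directly, the key observation being that $a\mapsto\psmash{a}{1_H}$ is precisely the algebra monomorphism $f$ fixed earlier, whereas $1_A\#1_H$ is only a \emph{left} unit of $A\#H$ but a genuine two-sided unit of $(A\#H)(1_A\#1_H)$.

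For the first assertion, assume $M$ is a right partial $\modpar{A}{H}$-module. Since $A\#H = A\ts H$ as a vector space, the rule $m\ts(a\ts h)\mapsto (ma)\acd h$ defines a $\kk$-bilinear map $M\times(A\#H)\to M$, and I would check, using the multiplication of $A\#H$ and axiom $\itm{(PM2)}$, the associativity $(m\,(a\#h))\,(b\#g) = m\,((a\#h)(b\#g))$; thus $M$ becomes a right module over the associative algebra $A\#H$. Next I would note that $e := 1_A\#1_H$ is idempotent, satisfies $e(b\#g) = b\#g$ (so $e$ is a left unit of $A\#H$), is a two-sided unit of the subalgebra $\psmash{A}{H} = (A\#H)e$, and that $m\cdot e = (m1_A)\acd 1_H = m$ for all $m\in M$ by $\itm{(PM1)}$. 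Hence restriction of scalars makes $M$ a \emph{unital} right $\psmash{A}{H}$-module, and since $\psmash{a}{h} = (a\#h)e$ one computes $m\cdot\psmash{a}{h} = (m\cdot(a\#h))\cdot e = (ma)\acd h$, the asserted formula. The left-module statement is completely analogous, working with $(\psmash{a}{h})\cdot m := a(h\ace m)$ and the left versions of $\itm{(PM1)}$--$\itm{(PM2)}$.

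For the converse, let $M$ be a unital right $\psmash{A}{H}$-module. From $e(b\#g) = b\#g$ one derives the multiplication rule $(\psmash{x}{k})(\psmash{y}{l}) = \sum \psmash{x(k_1\cdot y)}{k_2 l}$ inside $\psmash{A}{H}$; in particular $a\mapsto\psmash{a}{1_H}$ is the unital algebra homomorphism $f$, so $ma := m(\psmash{a}{1_H})$ makes $M$ a unital right $A$-module, and one defines $m\acd h := m(\psmash{1_A}{h})$. Axiom $\itm{(PM1)}$ holds since $\psmash{1_A}{1_H}$ is the unit of $\psmash{A}{H}$, and $\itm{(PM3)}$, $\itm{(PM4)}$ follow from the special cases $(\psmash{1_A}{h})(\psmash{a}{1_H}) = \sum\psmash{h_1\cdot a}{h_2}$, $(\psmash{1_A}{h})(\psmash{1_A}{g}) = \sum\psmash{h_1\cdot 1_A}{h_2 g}$ and $(\psmash{b}{1_H})(\psmash{1_A}{k}) = \psmash{b}{k}$ of the multiplication rule. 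A routine inspection then shows the two constructions are mutually inverse, and the left case is symmetric.

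For the annihilator identity, I would use that under $f$ the algebra $A$ is identified with $\psmash{A}{1_H}\subseteq\psmash{A}{H}$ and that in both directions $m(\psmash{a}{1_H}) = ma$; hence $f(a)$ annihilates $M$ over $\psmash{A}{H}$ exactly when $a$ annihilates $M$ over $A$, which gives $\ann_{\psmash{A}{H}}(M)\cap A = f(\ann_A(M)) = \ann_A(M)$ after identification. The only delicate point --- and the main, albeit mild, obstacle --- is keeping track of the fact that $\psmash{A}{H}$ is \emph{not} a unital subalgebra of $A\#H$, so one must consistently exploit that $1_A\#1_H$ is merely a left unit of $A\#H$ yet a true two-sided unit of $(A\#H)(1_A\#1_H)$; once this is in place, the rest is bookkeeping with the Sweedler notation.
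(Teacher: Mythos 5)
Your proposal is correct and follows essentially the same route as the paper: both constructions first realize the action on all of $A \# H$ (the paper via the auxiliary map $\kappa$, you via a non-unital $A\#H$-module structure) and then restrict to $\psmash{A}{H}$ using that $1_A \# 1_H$ is a left unit of $A\#H$ and the two-sided unit of $\psmash{A}{H}$, while the converse is read off from the products $(\psmash{1_A}{h})(\psmash{a}{1_H})(\psmash{1_A}{g}) = \sum \psmash{h_1\cdot a}{h_2 g}$. Your checking of \itm{(PM3)}--\itm{(PM4)} in place of the paper's direct verification of \itm{(PM2)} is an equivalent bookkeeping choice, given the stated equivalence of the two axiom sets.
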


\begin{proof}
Suppose that $M$ is a right partial $\modpar{A}{H}$-module. The map
\begin{eqnarray*}
\funn{\zeta}{M \ts \psmash{A}{H}}{M}{m \tssp \psmash{a}{h}}{(ma) \acd h}
\end{eqnarray*}
is well defined and $\kk$-linear. In fact, if $\varsigma$ and $\xi$ are the actions of $A$ and $H$ on $M$, respectively, then the $\kk$-linear map
$$\kappa : M \ts A \# H \cong M \ts A \ts H \xrightarrow{\varsigma \tsspp id_H} M \ts H \stackrel{\xi}{\longrightarrow} M$$
is such that, for any $m \in M$, $a \in A$ and $h \in H$,
$$\kappa \paren{m \tssp \psmash{a}{h}} \, = \, \kappa \big( m \tssp {\textstyle \sum} a(h_1 \cdot 1_A) \# h_2 \big) \, = \, \sum (ma(h_1 \cdot 1_A)) \acd h_2 \, = \, ((ma) \acd h)1_A \, = \, (ma) \acd h.$$
Thus $\zeta$ is the restriction of $\kappa$ to the subspace $M \ts \psmash{A}{H}$, and therefore $\zeta$ is a well defined $\kk$-linear map. Now, for any $m \in M$, $a,b \in A$ and $h,g \in H$, it is clear that $m \! \ppsmash{1_A}{1_H} = m$ and also we have
\begin{eqnarray*}
\paren{m \ppsmash{a}{h}} \ppsmash{b}{g} & = & (((ma) \acd h)b) \acd g \\
& = & \sum (ma(h_1 \cdot b)) \acd (h_2 g) \\
& = & m \paren{\sum \psmash{a(h_1 \cdot b)}{h_2 g}} \\
& = & m \paren{ \paren{\psmash{a}{h}} \paren{\psmash{b}{g}}},
\end{eqnarray*}
so $\zeta$ defines a right $\psmash{A}{H}$-module structure on $M$.

Conversely, suppose that $M$ is a right $\psmash{A}{H}$-module. Again we need verify that the maps
\begin{eqnarray*}
\ffun{M \ts A}{M}{m \tssp a}{m(\psmash{a}{1_H})}
\end{eqnarray*}
and
\begin{eqnarray*}
\ffun{M \ts H}{M}{m \tssp h}{m(\psmash{1_A}{h})}
\end{eqnarray*}
are well defined and $\kk$-linear. The first one follows by the algebra inclusion $A \subseteq \psmash{A}{H}$.  The second one is exactly the composition of the $\kk$-linear maps
\begin{eqnarray*}
M \ts H & \longrightarrow & M \ts A \ts H \xrightarrow{id_M \tsspp \pi} M \ts \psmash{A}{H} \stackrel{\zeta}{\longrightarrow} M \\
m \tssp h & \mapsto & m \tssp 1_A \tssp h
\end{eqnarray*}
where $\fun{\pi}{A \ts H}{\psmash{A}{H}}$, $a \tssp h \mapsto (a \# h)(1_A \# 1_H) = \psmash{a}{h}$, is the canonical projection and $\zeta$ is the action of $\psmash{A}{H}$ on $M$. The conditions $\itm{(PM1)}$ and $\itm{(PM2)}$ of the Definition \ref{defAHmod} follow from the $\psmash{A}{H}$-module structure of $M$: for any $m \in M$, $a \in A$ and $h,g \in H$, it is clear that $m \acd 1_H = m$ and also
\begin{eqnarray*}
\big( (m \acd h)a \big) \acd g & = & \Big( \paren{m \paren{\psmash{1_A}{h}}} \paren{\psmash{a}{1_H}} \Big) \paren{\psmash{1_A}{g}} \\
& = & m \paren{\paren{\psmash{1_A}{h}} \paren{\psmash{a}{1_H}} \paren{\psmash{1_A}{g}}} \\
& = & m \paren{\sum \psmash{h_1 \cdot a}{h_2 g}} \\
& = & \sum \big( m(h_1 \cdot a) \big) \acd (h_2 g)
\end{eqnarray*}
Hence $M$ is a right partial $\modpar{A}{H}$-module in this case.

To prove that  $M$ is a left $\modpar{A}{H}$-module if and only if $M$ is a left $\psmash{A}{H}$-module we proceed in the same way.
Finally, we observe that the equality $\ann_A(M) = \ann_{\psmash{A}{H}}(M) \cap A$ is clear.
\end{proof}
\esp

Note that the Proposition \ref{anulHideal} is also a consequence of the Proposition \ref{idHest1} and of the equality $\ann_A(M) \, = \, \ann_{\psmash{A}{H}}(M) \, \cap \, A$.
\esp


\section{The $H$-prime and the $H$-Jacobson radicals} \label{Hradicais}

In this section we study the concepts of $H$-prime and $H$-Jacobson radicals of a partial $H$-module algebra $A$. These structures are analogous to the prime and Jacobson radicals of any algebra, and they are useful to study the semiprimality and the semiprimitivity problems for the partial smash product.

\subsection{$H$-(semi)prime ideals and the $H$-prime radical}

We start by considering the (partial) $H$-prime radical. In \cite{MS} the authors studied the $H$-prime and the $H$-semiprime ideals of an $H$-module algebra $A$ in the context of the global actions. Such concepts can also be considered when the action of $H$ on $A$ is a partial action. In the sequel we introduce this new concepts with the purpose to define the $H$-prime radical of $A$, when $A$ is a partial $H$-module. After that, we establish a link between the $H$-prime radical of $A$ and the $\dual{H}$-prime radical of $\psmash{A}{H}$, when $H$ is finite-dimensional. Our approach for $H$-prime and $H$-semiprime ideals, in a natural way, closely follows the classical one which  can be found, for example, in \cite[Section 10]{L1} about prime and semiprime ideals. We will present here the sketches of our proofs for the convenience of the reader.

\esp
\begin{defi} \label{defHprimo}
An $H$-stable ideal $\mf{p}$ of $A$ is called $H$-prime if $\mf{p} \neq A$ and, for any $H$-stable ideals $I, J \subseteq A$,
$$IJ \subseteq \mf{p} \quad \Rightarrow \quad I \subseteq \mf{p} \quad \text{or} \quad J \subseteq \mf{p}.$$
\end{defi}
\esp

An example of $H$-prime ideal is given by a maximal $H$-stable ideal of $A$, whose existence is a consequence of Zorn's Lemma.
The next result gives other useful characterizations of $H$-prime ideals.

\esp
\begin{prop} \label{caracIdHprimos}
For any $H$-stable ideal $\mf{p} \varsubsetneq A$, the following statements are equivalent:
\begin{itemize}
\item[\itm{(1)}] $\mf{p}$ is $H$-prime;
\item[\itm{(2)}] For any $a, b \in A$, $A(H \cdot a)A(H \cdot b)A \subseteq \mf{p}$ implies that $a \in \mf{p}$ or $b \in \mf{p}$;
\item[\itm{(3)}] For any $a, b \in A$, $A(H \cdot a)A(H \cdot b) \subseteq \mf{p}$ implies that $a \in \mf{p}$ or $b \in \mf{p}$;
\item[\itm{(4)}] For any $a, b \in A$, $(H \cdot a)A(H \cdot b) \subseteq \mf{p}$ implies that $a \in \mf{p}$ or $b \in \mf{p}$;
\item[\itm{(5)}] For any $H$-stable left ideals $I, J \subseteq A$, $IJ \subseteq \mf{p}$ implies that $I \subseteq \mf{p}$ or $J \subseteq \mf{p}$;
\item[\itm{(5')}] For any $H$-stable right ideals $I, J \subseteq A$, $IJ \subseteq \mf{p}$ implies that $I \subseteq \mf{p}$ or $J \subseteq \mf{p}$.
\end{itemize}
\end{prop}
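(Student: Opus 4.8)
The strategy is to establish the chain of implications $\itm{(1)} \Rightarrow \itm{(5)} \Rightarrow \itm{(3)} \Rightarrow \itm{(4)} \Rightarrow \itm{(2)} \Rightarrow \itm{(1)}$ (together with the symmetric variant for $\itm{(5')}$), imitating the classical argument for prime ideals in \cite[Section 10]{L1} but systematically replacing ``ideal generated by $a$'' with ``smallest $H$-stable ideal containing $a$''. The first point to make, as a preliminary, is that for $a \in A$ the smallest $H$-stable (two-sided) ideal of $A$ containing $a$ is $A(H\cdot a)A$ (or rather the $\kk$-span of such products, together with $H\cdot a$ and absorbed scalars — because $1_H \in H$ and $1_A \in A$ we get $a$ itself inside), and the smallest $H$-stable left ideal containing $a$ is $A(H\cdot a)$; these facts follow from $\itm{(PA3)}$ and $\itm{(PA4)}$, which guarantee that $A(H\cdot a)A$ and $A(H\cdot a)$ are indeed $H$-stable. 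Note I must be slightly careful that $H\cdot a$ need not contain $a$ a priori, but $1_H\cdot a = a$ by $\itm{(PA1)}$, so it does.

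For $\itm{(1)} \Rightarrow \itm{(5)}$: given $H$-stable left ideals $I,J$ with $IJ\subseteq\mf p$, I would pass to the $H$-stable two-sided ideals $I' = IA$ and $J' = JA$ (these are $H$-stable by the same $\itm{(PA3)}$ argument used in Proposition \ref{maioridealHest}), observe $I'J' = IAJA \subseteq I(AJ)A \subseteq IJA \subseteq \mf p A \subseteq \mf p$ since $AJ\subseteq J$, apply $\itm{(1)}$ to conclude $I'\subseteq\mf p$ or $J'\subseteq\mf p$, and then use $I\subseteq I'$, $J\subseteq J'$. For $\itm{(5)}\Rightarrow\itm{(3)}$: if $A(H\cdot a)A(H\cdot b)\subseteq\mf p$, then taking the $H$-stable left ideals $I = A(H\cdot a)A$ and $J = A(H\cdot b)$ one has $IJ\subseteq\mf p$, hence $I\subseteq\mf p$ or $J\subseteq\mf p$; in the first case $a\in A(H\cdot a)A\subseteq\mf p$ and in the second $b\in A(H\cdot b)\subseteq\mf p$. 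The implication $\itm{(3)}\Rightarrow\itm{(4)}$ is trivial since $(H\cdot a)A(H\cdot b)\subseteq A(H\cdot a)A(H\cdot b)$, and $\itm{(4)}\Rightarrow\itm{(2)}$ is likewise immediate from containment of the relevant products. For $\itm{(2)}\Rightarrow\itm{(1)}$: given $H$-stable two-sided ideals $I,J$ with $IJ\subseteq\mf p$ and $I\not\subseteq\mf p$, pick $a\in I\setminus\mf p$; then for every $b\in J$ we have $A(H\cdot a)A(H\cdot b)A\subseteq IJ\subseteq\mf p$ (using $H$-stability of $I$ and $J$ to absorb the $H\cdot$ and the surrounding $A$'s), so by $\itm{(2)}$ and $a\notin\mf p$ we get $b\in\mf p$; hence $J\subseteq\mf p$. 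The $\itm{(5')}$ equivalence is obtained by the mirror-image argument using right ideals $AI$, $AJ$ and the left-handed version of the generated-ideal computation.

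The step I expect to require the most care is the bookkeeping in $\itm{(2)}\Rightarrow\itm{(1)}$ and in the preliminary claim that $A(H\cdot a)A$ is $H$-stable: one must check that $h\cdot\big(a'(g\cdot a)a''\big)$ lands back in $A(H\cdot a)A$, which uses $\itm{(PA3)}$ to distribute $h$ over the product and then $\itm{(PA4)}$ to rewrite $h_i\cdot(g\cdot a)$ as $\sum (h_{i1}\cdot 1_A)((h_{i2}g)\cdot a)$, with the $(h_{i1}\cdot 1_A)$ factors absorbed into the left $A$. None of this is deep, but the partial-action identities must be invoked in exactly the right order, and the non-unital subtlety ($h\cdot 1_A \ne \varepsilon(h)1_A$ in general) means one cannot take shortcuts that are automatic in the global case. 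Everything else is formal manipulation paralleling the classical proof.
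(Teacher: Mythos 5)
Your proof is correct and rests on essentially the same ideas as the paper's: both arguments reduce everything to the observation that $A(H\cdot a)A$ (resp.\ $A(H\cdot a)$) is the smallest $H$-stable ideal (resp.\ left ideal) containing $a$, the only cosmetic difference being the ordering of the cycle (the paper runs $\itm{(1)}\Rightarrow\itm{(2)}\Rightarrow\itm{(3)}\Rightarrow\itm{(4)}\Rightarrow\itm{(5)},\itm{(5')}\Rightarrow\itm{(1)}$ and proves $\itm{(4)}\Rightarrow\itm{(5)}$ by picking $a\in I\setminus\mf{p}$ directly, while you run $\itm{(1)}\Rightarrow\itm{(5)}\Rightarrow\itm{(3)}\Rightarrow\itm{(4)}\Rightarrow\itm{(2)}\Rightarrow\itm{(1)}$ and get $\itm{(1)}\Rightarrow\itm{(5)}$ by passing to the two-sided ideals $IA$, $JA$). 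One small slip: your justification of $\itm{(3)}\Rightarrow\itm{(4)}$ is stated backwards, since the containment $(H\cdot a)A(H\cdot b)\subseteq A(H\cdot a)A(H\cdot b)$ is what proves $\itm{(4)}\Rightarrow\itm{(3)}$; for $\itm{(3)}\Rightarrow\itm{(4)}$ you instead need that the hypothesis of $\itm{(4)}$ forces $A(H\cdot a)A(H\cdot b)=A\bigl((H\cdot a)A(H\cdot b)\bigr)\subseteq A\mf{p}\subseteq\mf{p}$, using that $\mf{p}$ is an ideal. This is trivially repaired and does not affect the rest of the argument.
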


\begin{proof}
$\itm{(1)} \Rightarrow \itm{(2)}$ follows because $A(H \cdot a)A$ and $A(H \cdot b)A$ are $H$-stable ideals and the implications $\itm{(2)} \Rightarrow \itm{(3)} \Rightarrow \itm{(4)}$ are easy. For $\itm{(4)} \Rightarrow \itm{(5)}$  we assume that $I$ and $J$ are $H$-stable left ideals of $A$ such that $IJ \subseteq \mf{p}$, but $I \nsubseteq \mf{p}$. Fix an element $a \in \comp{I}{\mf{p}}$. For any $b \in J$,
$$(H \cdot a)A(H \cdot b) \subseteq I(AJ) \subseteq IJ \subseteq \mf{p},$$
so by $\itm{(4)}$, $b \in \mf{p}$. It follows that $J \subseteq \mf{p}$. The implication $\itm{(4)} \Rightarrow \itm{(5')}$ is analogous and the implications $\itm{(5)} \Rightarrow \itm{(1)}$ and $\itm{(5')} \Rightarrow \itm{(1)}$ are trivial.
\end{proof}

Following the terminology used in \cite{L1}, we present our next definition.

\esp

\begin{defi}
We say that a nonempty subset $\mpzc{M} \subseteq A$ is an $Hm$-system if for any $x,y \in \mpzc{M}$, we have
$$A(H \cdot x)A(H \cdot y) \cap \mpzc{M} \neq \emptyset.$$
\end{defi}
\esp

From the above definition we can give some more precise characterizations of $H$-prime ideal. The first one is an immediate consequence of Proposition \ref{caracIdHprimos} $\itm{(3)}$.

\esp
\begin{prop} \label{HprimeHmsist1}
An $H$-stable ideal $\mf{p}$ of $A$ is $H$-prime if and only if $A \backslash \mf{p}$ is an $Hm$-system.
\end{prop}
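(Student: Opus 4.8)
The plan is to mimic the classical proof that an ideal $P$ in a ring is prime if and only if its complement is an $m$-system, adapting each step to work with $H$-stable ideals and the operator $A(H\cdot -)A$ in place of ordinary principal ideals. I would first establish the forward implication: assume $\mf{p}$ is $H$-prime and let $x,y \in A\backslash\mf{p}$. By Proposition~\ref{caracIdHprimos}, characterization \itm{(3)}, the contrapositive tells us that if $x\notin\mf{p}$ and $y\notin\mf{p}$ then $A(H\cdot x)A(H\cdot y)\nsubseteq\mf{p}$, hence there is some element $z \in A(H\cdot x)A(H\cdot y)$ with $z\notin\mf{p}$, i.e. $z\in A\backslash\mf{p}$. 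Thus $A(H\cdot x)A(H\cdot y)\cap(A\backslash\mf{p})\neq\emptyset$, which is exactly the defining condition for $A\backslash\mf{p}$ to be an $Hm$-system.

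For the converse, suppose $A\backslash\mf{p}$ is an $Hm$-system; I want to show $\mf{p}$ is $H$-prime. Since $\mf{p}$ is already assumed $H$-stable and proper, by Proposition~\ref{caracIdHprimos} it suffices to verify characterization \itm{(3)}: given $a,b\in A$ with $A(H\cdot a)A(H\cdot b)\subseteq\mf{p}$, I argue by contraposition. If $a\notin\mf{p}$ and $b\notin\mf{p}$, then $a,b\in A\backslash\mf{p}$, so by the $Hm$-system property $A(H\cdot a)A(H\cdot b)\cap(A\backslash\mf{p})\neq\emptyset$, i.e. there exists $z\in A(H\cdot a)A(H\cdot b)$ with $z\notin\mf{p}$. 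This contradicts $A(H\cdot a)A(H\cdot b)\subseteq\mf{p}$. Hence $a\in\mf{p}$ or $b\in\mf{p}$, so condition \itm{(3)} holds and $\mf{p}$ is $H$-prime.

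Since the statement itself is flagged in the text as ``an immediate consequence of Proposition~\ref{caracIdHprimos} \itm{(3)}'', I do not expect any genuine obstacle here; the whole content is unwinding the two definitions against the equivalence already proved. The only point requiring a sliver of care is making sure the hypotheses of Proposition~\ref{caracIdHprimos} are in place when invoking it in the converse direction --- namely that $\mf{p}$ is a proper $H$-stable ideal --- but this is part of the standing assumption in the proposition's statement (``For any $H$-stable ideal $\mf{p}\varsubsetneq A$''), so it costs nothing. The proof is therefore essentially two contrapositive restatements glued together by Proposition~\ref{caracIdHprimos}\,\itm{(3)}.
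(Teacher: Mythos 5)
Your proof is correct and is exactly the argument the paper intends: the paper dismisses this proposition as an immediate consequence of Proposition \ref{caracIdHprimos}\,\itm{(3)}, and your two contrapositive unwindings are precisely that. The only micro-observation worth adding is that in the converse direction the properness of $\mf{p}$ follows from the nonemptiness required of an $Hm$-system (rather than being a standing assumption of the statement being proved), but this changes nothing of substance.
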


\vspace{0.1cm}

\begin{prop} \label{HprimeHmsist2}
Let $\mpzc{M} \subseteq A$ be an $Hm$-system. If $\mf{p}$ is an $H$-stable ideal of $A$ maximal with respect to the property that $\mf{p} \cap \mpzc{M} = \emptyset$, then $\mf{p}$ is $H$-prime.
\end{prop}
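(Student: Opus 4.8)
The plan is to mimic the classical argument for prime ideals: a maximal element among the $H$-stable ideals missing a multiplicatively-closed-type set is prime, which I would translate here as "missing an $Hm$-system." By Proposition \ref{HprimeHmsist1}, it suffices to show that $A \backslash \mf{p}$ is an $Hm$-system; equivalently, using Proposition \ref{caracIdHprimos}~\itm{(3)}, it suffices to show that for $a,b \notin \mf{p}$ one has $A(H\cdot a)A(H\cdot b) \nsubseteq \mf{p}$. So I would argue by contraposition on the characterization \itm{(3)}.

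First I would fix $a, b \in A$ with $a \notin \mf{p}$ and $b \notin \mf{p}$, and form the $H$-stable ideals $I := \mf{p} + A(H\cdot a)A$ and $J := \mf{p} + A(H\cdot b)A$ (these are $H$-stable since $\mf{p}$ is $H$-stable and $A(H\cdot a)A$, $A(H\cdot b)A$ are $H$-stable ideals by \itm{(PA3)} and \itm{(PA4)}, as already used in the proof of Proposition \ref{caracIdHprimos}). Each strictly contains $\mf{p}$: if, say, $A(H\cdot a)A \subseteq \mf{p}$, then since $1_H \cdot a = a$ and $A$ is unital we would get $a = 1_A\, a\, 1_A \in A(H\cdot a)A \subseteq \mf{p}$, a contradiction. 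By maximality of $\mf{p}$ with respect to disjointness from $\mpzc{M}$, both $I$ and $J$ meet $\mpzc{M}$: pick $x \in I \cap \mpzc{M}$ and $y \in J \cap \mpzc{M}$.

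Now I would use the $Hm$-system property on $x, y$: there is an element $z \in A(H\cdot x)A(H\cdot y) \cap \mpzc{M}$. The key computation is to show that $A(H\cdot x)A(H\cdot y) \subseteq I J$, so that $z \in IJ \cap \mpzc{M}$. For this, since $x \in I$ and $I$ is an $H$-stable left and right ideal, $H \cdot x \subseteq I$ and hence $A(H\cdot x)A \subseteq I$; similarly $A(H\cdot y)A \subseteq J$, whence $A(H\cdot x)A(H\cdot y) \subseteq A(H\cdot x)A \cdot A(H\cdot y)A \subseteq IJ$ (using $A$ unital to absorb the trailing $A$). Therefore $z \in IJ$. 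On the other hand, expanding $I J = (\mf{p} + A(H\cdot a)A)(\mf{p} + A(H\cdot b)A) \subseteq \mf{p} + A(H\cdot a)A(H\cdot b)A$, since three of the four cross terms land in $\mf{p}$ (as $\mf{p}$ is an ideal). So $z \in \mf{p} + A(H\cdot a)A(H\cdot b)A$. If we had $A(H\cdot a)A(H\cdot b)A \subseteq \mf{p}$, this would force $z \in \mf{p}$, contradicting $z \in \mpzc{M}$ and $\mf{p} \cap \mpzc{M} = \emptyset$. Hence $A(H\cdot a)A(H\cdot b)A \nsubseteq \mf{p}$, and by Proposition \ref{caracIdHprimos}~\itm{(2)} (or directly \itm{(3)} after a trivial adjustment) we conclude $a \in \mf{p}$ or $b \in \mf{p}$ fails to be forced — i.e. the contrapositive gives $H$-primeness of $\mf{p}$.

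The only genuinely non-routine point is the inclusion $A(H\cdot x)A(H\cdot y) \subseteq IJ$, which rests on the observation that an $H$-stable ideal contains $H$-translates of its elements together with all left/right $A$-multiples; everything else is the standard prime-avoidance bookkeeping plus the unitality of $A$ (to guarantee $a \in A(H\cdot a)A$ via $1_H \cdot a = a$). I expect that step, and making sure the ideals $I, J$ are honestly $H$-stable, to be where the partial-action hypotheses \itm{(PA3)}--\itm{(PA4)} are actually invoked; no finite-dimensionality or semisimplicity of $H$ is needed here.
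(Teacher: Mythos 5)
Your proof is correct and follows essentially the same argument as the paper's: maximality forces the enlarged $H$-stable ideals to meet $\mpzc{M}$, the $Hm$-system condition puts an element of $\mpzc{M}$ into $\mf{p}+IJ$, and disjointness from $\mf{p}$ finishes. The only cosmetic difference is that you verify characterization \itm{(2)}/\itm{(3)} of Proposition \ref{caracIdHprimos} using the principal $H$-stable ideals $\mf{p}+A(H\cdot a)A$ and $\mf{p}+A(H\cdot b)A$, whereas the paper checks the definition of $H$-prime directly for arbitrary $H$-stable ideals $I,J \nsubseteq \mf{p}$.
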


\begin{proof}
Let $I, J$ be $H$-stable ideals of $A$ such that $I,J \nsubseteq \mf{p}$. Then $\mf{p} \varsubsetneq \mf{p} + I$ and $\mf{p} \varsubsetneq \mf{p} + J$, and it follows that there exist $x,y \in A$ such that
$$x \in \mpzc{M} \cap (\mf{p} + I) \quad \text{and} \quad y \in \mpzc{M} \cap (\mf{p} + J).$$
Since $\mpzc{M}$ is an $Hm$-system, $A(H \cdot x)A(H \cdot y) \cap \mpzc{M} \neq \emptyset$. On the other hand, we also have
$$A(H \cdot x)A(H \cdot y) \,\, \subseteq \,\, A \big( H \cdot (\mf{p} + I) \big) A \big( H \cdot (\mf{p} + J) \big) \,\, \subseteq \,\, (\mf{p} + I)(\mf{p} + J) \,\, \subseteq \,\, \mf{p} + IJ,$$
which implies that $(\mf{p} + IJ) \cap \mpzc{M} \neq \emptyset$. From $\mf{p} \cap \mpzc{M} = \emptyset$, it follows that $IJ \nsubseteq \mf{p}$. Hence $\mf{p}$ is $H$-prime.
\end{proof}

\esp

\begin{defi} \label{defHradical}
Let $I$ be an $H$-stable ideal of $A$. We define the $H$-radical of $I$ as the set
$$\Hrz{I} := \conj{x \in A}{\text{every} \; Hm\text{-system containing} \; x \; \text{meets} \; I}.$$
In the particular case when $I = 0$, we will  call $P_H(A) := \Hrz{0}$ the $H$-prime radical of $A$.
\end{defi}
\esp

Clearly $I \subseteq \Hrz{I}$ for any $H$-stable ideal $I$ of $A$. We say that an ideal $I$ of $A$ is an {\it $H$-radical ideal} if $I = \Hrz{I}$.

It is not clear from the Definition \ref{defHradical} that $\Hrz{I}$ is an $H$-stable ideal of $A$. This will be shown in the next result.

\esp
\begin{prop} \label{caracHradical}
For any $H$-stable ideal $I$ of $A$, $\Hrz{I}$ is the intersection of all  $H$-prime ideals of $A$ containing $I$. In particular, $\Hrz{I}$ is an $H$-stable ideal of $A$.
\end{prop}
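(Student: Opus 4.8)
The statement to prove, \textbf{Proposition \ref{caracHradical}}, says that for any $H$-stable ideal $I$, the set $\Hrz{I}$ equals the intersection of all $H$-prime ideals containing $I$. This is the classical "radical of an ideal equals the intersection of prime ideals over it" theorem transported to the $H$-stable setting, so the proof should follow the template of the commutative/noncommutative ring-theoretic argument (as in Lam's treatment cited as \cite{L1}), replacing ordinary ideals and $m$-systems by $H$-stable ideals and $Hm$-systems, and using the machinery already built up in Propositions \ref{HprimeHmsist1} and \ref{HprimeHmsist2}.

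The plan is to prove the two inclusions separately. For the inclusion $\Hrz{I} \subseteq \bigcap \{ \mf{p} : \mf{p} \text{ is } H\text{-prime}, I \subseteq \mf{p} \}$: let $\mf{p}$ be an $H$-prime ideal containing $I$ and let $x \in A \setminus \mf{p}$. By Proposition \ref{HprimeHmsist1}, $A \setminus \mf{p}$ is an $Hm$-system, and it contains $x$ but is disjoint from $I$ (since $I \subseteq \mf{p}$). Hence there is an $Hm$-system containing $x$ that does not meet $I$, so $x \notin \Hrz{I}$ by Definition \ref{defHradical}. Contrapositively, $\Hrz{I} \subseteq \mf{p}$, and intersecting over all such $\mf{p}$ gives the inclusion. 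For the reverse inclusion: suppose $x \notin \Hrz{I}$. Then by definition there is an $Hm$-system $\mpzc{M}$ with $x \in \mpzc{M}$ and $\mpzc{M} \cap I = \emptyset$. By Zorn's Lemma, choose an $H$-stable ideal $\mf{p}$ containing $I$ that is maximal with respect to being disjoint from $\mpzc{M}$ (the family is nonempty since $I$ itself qualifies, and unions of chains of $H$-stable ideals disjoint from $\mpzc{M}$ are again $H$-stable and disjoint from $\mpzc{M}$). By Proposition \ref{HprimeHmsist2}, $\mf{p}$ is $H$-prime, it contains $I$, and $x \notin \mf{p}$ (since $x \in \mpzc{M}$ and $\mf{p} \cap \mpzc{M} = \emptyset$). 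Hence $x$ is not in the intersection of all $H$-prime ideals over $I$. This proves both inclusions, and the "in particular" clause follows since an arbitrary intersection of $H$-stable ideals is $H$-stable (and each $H$-prime ideal is by definition $H$-stable), so $\Hrz{I}$, being such an intersection, is an $H$-stable ideal of $A$.

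I should also remark that when $I$ is an $H$-stable ideal with no $H$-prime ideal above it — which cannot happen here since $I \neq A$ guarantees a maximal $H$-stable ideal containing $I$ by Zorn, and such a maximal $H$-stable ideal is $H$-prime — the intersection convention would be all of $A$; but this edge case does not arise, so I need not dwell on it. One subtle point worth checking explicitly in the write-up: in the reverse inclusion, the singleton $\{x\}$ for $x \in \Hrz{I}^c$ need not itself be an $Hm$-system, which is exactly why Definition \ref{defHradical} quantifies over \emph{all} $Hm$-systems containing $x$ and why producing the explicit $Hm$-system $\mpzc{M}$ (given to us directly by the negation of membership in $\Hrz{I}$) is the right move rather than trying to build one from scratch.

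The main obstacle — really the only place requiring care — is verifying that Zorn's Lemma applies to the family of $H$-stable ideals containing $I$ and disjoint from $\mpzc{M}$: one must confirm that the union of a chain of such ideals is again an ideal (routine), is again $H$-stable (also routine, since $H \cdot (\bigcup I_\alpha) = \bigcup (H \cdot I_\alpha) \subseteq \bigcup I_\alpha$), still contains $I$, and still avoids $\mpzc{M}$. All of this is straightforward, so the proof is essentially a bookkeeping exercise assembling Propositions \ref{HprimeHmsist1} and \ref{HprimeHmsist2}; the genuine content was already packed into those two results and into the definition of $Hm$-system.
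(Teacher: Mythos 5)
Your proof is correct and follows essentially the same route as the paper: the forward inclusion via Proposition \ref{HprimeHmsist1} (that the complement of an $H$-prime ideal is an $Hm$-system) and the reverse inclusion via Zorn's Lemma together with Proposition \ref{HprimeHmsist2}. The extra remarks on the Zorn's Lemma verification and the ``in particular'' clause are accurate and consistent with what the paper leaves implicit.
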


\begin{proof}
Let $\mf{p}$ be an $H$-prime ideal of $A$ containing $I$ and let $x \in \Hrz{I}$. Since $\comp{A}{\mf{p}}$ is an $Hm$-system (Proposition \ref{HprimeHmsist1}) which does not meet $I$ it follows that $x\in \mf{p}$. Thus, $\Hrz{I}$ is contained in the intersection of all  $H$-prime ideals of $A$ containing $I$.

Conversely, if $x \notin \Hrz{I}$ then there exists an $Hm$-system $\mpzc{M} \subseteq A$ such that $x \in \mpzc{M}$ and $\mpzc{M} \cap I = \emptyset$. By Zorn's Lemma, there exists an $H$-stable ideal $\mf{p}$ of $A$ containing $I$ which is maximal with respect to being $\mf{p} \cap \mpzc{M} = \emptyset$. By Proposition \ref{HprimeHmsist2}, $\mf{p}$ is an $H$-prime ideal (which contains $I$ and $x\not\in \mf{p}$). The proof is complete now.
\end{proof}
\esp


\begin{defi} \label{defHsemiprimo}
An $H$-stable ideal $\mf{s}$ of $A$ is called $H$-semiprime if $\mf{s} \neq A$ and, for any $H$-stable ideal $I \subseteq A$,
$$I^2 \subseteq \mf{s} \quad \Rightarrow \quad I \subseteq \mf{s}.$$
\end{defi}
\esp

Clearly every $H$-prime ideal is $H$-semiprime. Moreover, every intersection of $H$-semiprime ideals is also $H$-semiprime.

The proof of the next proposition is similar to that for Proposition \ref{caracIdHprimos}.

\esp
\begin{prop} \label{caracIdHsemiprimos1}
For any $H$-stable ideal $\mf{s} \varsubsetneq A$, the following statements are equivalent:
\begin{itemize}
\item[\itm{(1)}] $\mf{s}$ is $H$-semiprime;
\item[\itm{(2)}] For any $a \in A$, $A(H \cdot a)A(H \cdot a)A \subseteq \mf{s}$ implies that $a \in \mf{s}$;
\item[\itm{(3)}] For any $a \in A$, $A(H \cdot a)A(H \cdot a) \subseteq \mf{s}$ implies that $a \in \mf{s}$;
\item[\itm{(4)}] For any $a \in A$, $(H \cdot a)A(H \cdot a) \subseteq \mf{s}$ implies that $a \in \mf{s}$;
\item[\itm{(5)}] For any $H$-stable left ideal $I \subseteq A$, $I^2 \subseteq \mf{s}$ implies that $I \subseteq \mf{s}$;
\item[\itm{(5')}] For any $H$-stable right ideal $I \subseteq A$, $I^2 \subseteq \mf{s}$ implies that $I \subseteq \mf{s}$.
\end{itemize}
\end{prop}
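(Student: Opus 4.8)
The plan is to mimic the classical proof of the corresponding characterization of semiprime ideals (as in \cite[Section 10]{L1}), carefully inserting $H$-stability and the action of $H$ wherever products of elements appear. The logical skeleton will be the cycle
\[
\itm{(1)} \Rightarrow \itm{(5)} \Rightarrow \itm{(5')} \Rightarrow \itm{(2)} \Rightarrow \itm{(3)} \Rightarrow \itm{(4)} \Rightarrow \itm{(1)},
\]
or some convenient rearrangement of it; since every $H$-prime ideal is $H$-semiprime and the proof of Proposition \ref{caracIdHprimos} already handles the two-element versions, most of the implications will be near-verbatim adaptations.

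First I would prove $\itm{(1)} \Rightarrow \itm{(5)}$: if $I$ is an $H$-stable left ideal with $I^2 \subseteq \mf{s}$, note that $AI$ is an $H$-stable (two-sided) ideal, because $H \cdot (AI) \subseteq (H \cdot A)(H \cdot I) \subseteq AI$ by $\itm{(PA3)}$ and $H$-stability of $I$; then $(AI)(AI) \subseteq A(I^2) \subseteq \mf{s}$ wait — more carefully $(AI)(AI) = A(IA)I \subseteq A I^2 \subseteq \mf{s}$ using $IA \subseteq I$? No: $I$ is only a left ideal, so instead use $(AI)^2 = AIAI$ and $IAI \subseteq I \cdot I = I^2$ since $AI \subseteq$ the left ideal generated, hmm; the clean move is: $I + I^2$ or simply observe $I \subseteq AI + I$ and argue with the $H$-stable ideal $\langle I \rangle = I + AI$ whose square lies in $\mf{s}$, so $I \subseteq \langle I\rangle \subseteq \mf{s}$ by $\itm{(1)}$'s semiprime consequence. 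The implication $\itm{(5)} \Leftrightarrow \itm{(5')}$ will follow by the left--right symmetry already noted for $H$-prime ideals (passing through condition $\itm{(4)}$). For $\itm{(5)} \Rightarrow \itm{(4)}$: given $(H\cdot a)A(H\cdot a) \subseteq \mf{s}$, take $I$ to be the $H$-stable left ideal generated by $a$, namely $I = A(H\cdot a) + \kk(H\cdot a)$ — actually the partial-action subtlety is that $H\cdot(H\cdot a)$ need not equal $H\cdot a$, so one should take $I$ to be the smallest $H$-stable left ideal containing $a$; since $I \subseteq A\cdot(\text{$H$-orbit closure of }a)$ one gets $I^2 \subseteq A(H\cdot a)A(H\cdot a)\cdots \subseteq \mf{s}$ after absorbing the $A$'s, hence $a \in I \subseteq \mf{s}$. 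The chain $\itm{(2)} \Rightarrow \itm{(3)} \Rightarrow \itm{(4)}$ is trivial (shrinking the expression), and $\itm{(4)} \Rightarrow \itm{(1)}$ runs exactly as in Proposition \ref{caracIdHprimos}: if $I$ is $H$-stable with $I^2 \subseteq \mf{s}$ but $a \in I \setminus \mf{s}$, then $(H\cdot a)A(H\cdot a) \subseteq I(AI) \subseteq I^2 \subseteq \mf{s}$ forces $a \in \mf{s}$, a contradiction.

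The main obstacle — and the one point deserving genuine care rather than a wave of the hand — is the correct description, in the partial-action setting, of the smallest $H$-stable left (resp.\ two-sided) ideal generated by a single element $a$, and checking that its square is captured by expressions of the form $A(H\cdot a)A(H\cdot a)$. Unlike the global case, where $H\cdot a$ is already "closed" under further action by $\itm{(PA4)}$ reducing to $h\cdot(g\cdot a) = (hg)\cdot a$, here one only has $h\cdot(g\cdot a) = \sum(h_1\cdot 1_A)((h_2 g)\cdot a) \in A(H\cdot a)$; so the $H$-stable left ideal generated by $a$ is $A(H\cdot a)$ together with $H\cdot a$ itself, and one must verify $H\cdot(A(H\cdot a)) \subseteq A(H\cdot a)$ via $\itm{(PA3)}$ and the computation just quoted. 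Once that bookkeeping is in place, each implication reduces to absorbing factors of $A$ into products and invoking $H$-stability, exactly as in the proof of Proposition \ref{caracIdHprimos}; accordingly I would only sketch the adaptations and refer the reader there for the details common to both statements.
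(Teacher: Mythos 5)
Your overall strategy (adapt the classical argument and the proof of Proposition \ref{caracIdHprimos}) is the intended one: the paper's omitted proof runs $\itm{(1)} \Rightarrow \itm{(2)} \Rightarrow \itm{(3)} \Rightarrow \itm{(4)} \Rightarrow \itm{(5)},\itm{(5')} \Rightarrow \itm{(1)}$, where $\itm{(1)} \Rightarrow \itm{(2)}$ uses the $H$-stable two-sided ideal $A(H \cdot a)A$ and $\itm{(4)} \Rightarrow \itm{(5)}$ uses $(H \cdot a)A(H \cdot a) \subseteq I(AI) \subseteq I^2$ for $a$ in an $H$-stable left ideal $I$. Your rearranged cycle, however, forces you to prove $\itm{(1)} \Rightarrow \itm{(5)}$ directly, and the construction you settle on there does not work: since $A$ is unital and $I$ is a left ideal, $AI = I$, so $\langle I \rangle = I + AI = I$ is still only a left ideal, and neither condition $\itm{(1)}$ nor the definition of $H$-semiprime can be applied to it. The object you need is $IA$: it is a two-sided ideal containing $I$ (as $1_A \in A$), it is $H$-stable because $h \cdot (xa) = \sum (h_1 \cdot x)(h_2 \cdot a) \in (H \cdot I)A \subseteq IA$, and $(IA)^2 = I(AI)A \subseteq I^2 A \subseteq \mf{s}$, so $\itm{(1)}$ gives $I \subseteq IA \subseteq \mf{s}$.

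Two smaller points. Your cycle asserts $\itm{(5')} \Rightarrow \itm{(2)}$ but you never argue it, and the implications you actually sketch do not close into the full equivalence unless you also supply $\itm{(1)} \Rightarrow \itm{(2)}$ (easy: $\big(A(H \cdot a)A\big)^2 \subseteq A(H \cdot a)A(H \cdot a)A \subseteq \mf{s}$) or a substitute. And in $\itm{(5)} \Rightarrow \itm{(4)}$ no ``orbit closure'' is needed: by $\itm{(PA2)}$ the set $A(H \cdot a)$ is already an $H$-stable left ideal containing $a = 1_A(1_H \cdot a)$, with $\big(A(H \cdot a)\big)^2 = A\big((H \cdot a)A(H \cdot a)\big) \subseteq A\mf{s} \subseteq \mf{s}$. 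The remaining steps you give ($\itm{(2)} \Rightarrow \itm{(3)} \Rightarrow \itm{(4)}$ by absorbing factors into the ideal $\mf{s}$, and $\itm{(4)} \Rightarrow \itm{(1)}$) are correct.
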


\vspace{0.1cm}

\begin{defi}
We say that a nonempty subset $\mpzc{N} \subseteq A$ is an $Hn$-system, if for any $x \in \mpzc{N}$, we have
$$A(H \cdot x)A(H \cdot x) \cap \mpzc{N} \neq \emptyset.$$
\end{defi}
\esp

As an immediate consequence of the Proposition \ref{caracIdHsemiprimos1} $\itm{(3)}$ we also can characterize $H$-semiprime ideals by $Hn$-system.

\esp
\begin{prop} \label{HsemiprimeHnsist1}
An $H$-stable ideal $\mf{s}$ of $A$ is $H$-semiprime if and only if $A \backslash \mf{s}$ is an $Hn$-system.
\end{prop}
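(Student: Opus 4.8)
The plan is to deduce this directly from the characterization of $H$-semiprime ideals already obtained in Proposition \ref{caracIdHsemiprimos1}, exactly mirroring the classical argument that an ideal is semiprime if and only if its complement is an $n$-system (as in \cite[Section 10]{L1}). Both implications amount to restating item \itm{(3)} of that proposition in terms of the definition of $Hn$-system, so the proof should be short.

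For the forward implication I would assume $\mf{s}$ is $H$-semiprime. Since $\mf{s} \neq A$, the set $\comp{A}{\mf{s}}$ is nonempty. Given $x \in \comp{A}{\mf{s}}$, I want $A(H \cdot x)A(H \cdot x) \cap \comp{A}{\mf{s}} \neq \emptyset$; if this intersection were empty, then $A(H \cdot x)A(H \cdot x) \subseteq \mf{s}$, and Proposition \ref{caracIdHsemiprimos1}\,\itm{(3)} would force $x \in \mf{s}$, contradicting $x \in \comp{A}{\mf{s}}$. Hence $\comp{A}{\mf{s}}$ is an $Hn$-system.

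For the converse, suppose $\comp{A}{\mf{s}}$ is an $Hn$-system. Since $Hn$-systems are nonempty by definition, $\mf{s} \neq A$. To apply Proposition \ref{caracIdHsemiprimos1}\,\itm{(3)}, take $a \in A$ with $A(H \cdot a)A(H \cdot a) \subseteq \mf{s}$. If $a \notin \mf{s}$ then $a \in \comp{A}{\mf{s}}$, so the $Hn$-system property produces an element of $A(H \cdot a)A(H \cdot a)$ lying in $\comp{A}{\mf{s}}$, which is impossible since $A(H \cdot a)A(H \cdot a) \subseteq \mf{s}$. Thus $a \in \mf{s}$, and Proposition \ref{caracIdHsemiprimos1} shows $\mf{s}$ is $H$-semiprime.

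There is no genuine obstacle here: once Proposition \ref{caracIdHsemiprimos1} is available the argument is a pure translation between the ``elementwise'' characterization of $H$-semiprimality and the combinatorial definition of an $Hn$-system. The only points requiring a moment's care are the bookkeeping of the nonemptiness conditions ($\mf{s} \neq A$ on one side, $\comp{A}{\mf{s}} \neq \emptyset$ on the other) and lining up the quantifiers correctly --- precisely as in the proof of Proposition \ref{HprimeHmsist1} for the $H$-prime case, which can be cited as a model.
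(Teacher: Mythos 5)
Your argument is correct and is exactly the route the paper takes: Proposition \ref{HsemiprimeHnsist1} is stated there as an immediate consequence of Proposition \ref{caracIdHsemiprimos1}\,\itm{(3)}, and your two paragraphs simply write out that translation in full, with the nonemptiness bookkeeping handled properly. Nothing to add.
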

\esp

The next result can be proved in a similar way as Proposition \ref{HprimeHmsist2}.

\esp
\begin{prop} \label{HsemiprimeHnsist2}
Let $\mpzc{N} \subseteq A$ be an $Hn$-system. If $\mf{s}$ is an $H$-stable ideal of $A$ maximal with respect to the property that $\mf{s} \cap \mpzc{N} = \emptyset$, then $\mf{s}$ is $H$-semiprime.
\end{prop}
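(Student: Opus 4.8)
The plan is to mimic, essentially verbatim, the proof of Proposition \ref{HprimeHmsist2}, replacing everywhere a product $IJ$ of two $H$-stable ideals by the square $I^2$ of a single one, and the $Hm$-system condition $A(H\cdot x)A(H\cdot y)\cap\mpzc{M}\neq\emptyset$ by the $Hn$-system condition $A(H\cdot x)A(H\cdot x)\cap\mpzc{N}\neq\emptyset$. First observe that $\mf{s}\neq A$: by definition an $Hn$-system is nonempty, so any ideal disjoint from $\mpzc{N}$ is proper. Thus it remains only to verify the implication in Definition \ref{defHsemiprimo}.

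So let $I$ be an $H$-stable ideal of $A$ with $I\nsubseteq\mf{s}$; I will show $I^2\nsubseteq\mf{s}$. Since $\mf{s}$ and $I$ are $H$-stable ideals, so is $\mf{s}+I$, and $\mf{s}\varsubsetneq\mf{s}+I$. By the maximality of $\mf{s}$ among $H$-stable ideals disjoint from $\mpzc{N}$, the ideal $\mf{s}+I$ must meet $\mpzc{N}$; pick $x\in\mpzc{N}\cap(\mf{s}+I)$. Because $\mpzc{N}$ is an $Hn$-system, $A(H\cdot x)A(H\cdot x)\cap\mpzc{N}\neq\emptyset$. On the other hand, $H\cdot x\subseteq\mf{s}+I$ (as $\mf{s}+I$ is $H$-stable), so
\[
A(H\cdot x)A(H\cdot x)\ \subseteq\ (\mf{s}+I)(\mf{s}+I)\ \subseteq\ \mf{s}+I^2,
\]
using $\mf{s}\,\mf{s},\ \mf{s}I,\ I\mf{s}\subseteq\mf{s}$. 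Therefore $(\mf{s}+I^2)\cap\mpzc{N}\neq\emptyset$, and since $\mf{s}\cap\mpzc{N}=\emptyset$ this forces $I^2\nsubseteq\mf{s}$, as required. (One could equally phrase the conclusion via $H$-stable left ideals using Proposition \ref{caracIdHsemiprimos1}, but the two-sided version above already suffices.)

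There is no genuine obstacle here; the only points requiring a line of justification are that $\mf{s}+I$ is again an $H$-stable ideal — so that both the defining property of the $Hn$-system $\mpzc{N}$ and the maximality of $\mf{s}$ can be applied to it — and the elementary containment $(\mf{s}+I)^2\subseteq\mf{s}+I^2$. Both are immediate, so the proof is a routine transcription of that of Proposition \ref{HprimeHmsist2}.
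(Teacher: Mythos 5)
Your proof is correct and is exactly the adaptation the paper intends: the paper gives no separate argument for this proposition, stating only that it "can be proved in a similar way as Proposition \ref{HprimeHmsist2}," and your transcription (with $I=J$, the containment $(\mf{s}+I)^2\subseteq\mf{s}+I^2$, and the observation that $\mf{s}\neq A$ since $\mpzc{N}\neq\emptyset$) is precisely that adaptation.
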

\esp

The next result establishes a link between $Hm$-systems and $Hn$-systems which allow us a better understanding of the relations between $H$-prime  and $H$-semiprime ideals.

\esp
\begin{prop} \label{HmsisteHnsist}
Let $\mpzc{N} \subseteq A$ be an $Hn$-system. For every $a \in \mpzc{N}$, there exists an $Hm$-system $\mpzc{M} \subseteq \mpzc{N}$ such that $a \in \mpzc{M}$. Equivalently, $\mpzc{N}$ is equal to the union of the $Hm$-systems contained in $\mpzc{N}$.
\end{prop}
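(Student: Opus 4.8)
The plan is to transpose the classical relationship between $m$-systems and $n$-systems (see, e.g., \cite[Section 10]{L1}) to the $H$-equivariant setting. Fix $a \in \mpzc{N}$. Using that $\mpzc{N}$ is an $Hn$-system, I would build recursively a sequence $a = a_0, a_1, a_2, \dots$ of elements of $\mpzc{N}$ by choosing, at each step, some $a_{n+1} \in A(H \cdot a_n)A(H \cdot a_n) \cap \mpzc{N}$ (nonempty by hypothesis). Put $\mpzc{M} := \{ a_n : n \geq 0 \}$. Then $\mpzc{M} \subseteq \mpzc{N}$ and $a = a_0 \in \mpzc{M}$, so the whole statement reduces to verifying that $\mpzc{M}$ is an $Hm$-system; the ``equivalently'' clause is then immediate, since each $a \in \mpzc{N}$ lies in such an $\mpzc{M} \subseteq \mpzc{N}$ while conversely every $Hm$-system inside $\mpzc{N}$ is contained in $\mpzc{N}$.

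The technical device is the observation that, for every $b \in A$, the left ideal $A(H \cdot b)$ of $A$ is $H$-stable --- a short Sweedler computation using \itm{(PA3)} and \itm{(PA4)} --- contains $b$ (by \itm{(PA1)}, $b = 1_A(1_H \cdot b)$), and satisfies $A(H \cdot b)A(H \cdot b) \subseteq A(H \cdot b)$ (being a left ideal). From this and the recursion one obtains, by induction on $m$, that $a_m \in A(H \cdot a_n)$ whenever $m > n$: indeed $a_{n+1} \in A(H \cdot a_n)A(H \cdot a_n) \subseteq A(H \cdot a_n)$, and if $a_m \in A(H \cdot a_n) =: L$ then $H$-stability of $L$ gives $H \cdot a_m \subseteq L$, hence $A(H \cdot a_m) \subseteq L$, whence $a_{m+1} \in A(H \cdot a_m)A(H \cdot a_m) \subseteq L$.

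To check that $\mpzc{M}$ is an $Hm$-system, take $a_i, a_j \in \mpzc{M}$; since the defining condition $A(H \cdot x)A(H \cdot y) \cap \mpzc{M} \neq \emptyset$ is \emph{not} symmetric in $x$ and $y$, I would treat $i \leq j$ and $i > j$ separately. If $i \leq j$, I claim $a_{j+1} \in A(H \cdot a_i)A(H \cdot a_j)$: writing $a_{j+1}$ as a sum of terms $u(g \cdot a_j)v(g' \cdot a_j)$ with $u,v \in A$ and $g,g' \in H$, the factor $u(g \cdot a_j)$ lies in $A \cdot A(H \cdot a_i) = A(H \cdot a_i)$ because $a_j \in A(H \cdot a_i)$ and $A(H \cdot a_i)$ is an $H$-stable left ideal, so $u(g \cdot a_j)$ rewrites as $\sum_t p_t(q_t \cdot a_i)$ and each resulting term $\sum_t p_t(q_t \cdot a_i)\,v\,(g' \cdot a_j)$ lands in $A(H \cdot a_i)A(H \cdot a_j)$. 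If $i > j$ one argues symmetrically with $a_{i+1}$, now absorbing the \emph{last} occurrence of $a_i$ into the $H$-stable left ideal $A(H \cdot a_j)$ (using $a_i \in A(H \cdot a_j)$), to get $a_{i+1} \in A(H \cdot a_i)A(H \cdot a_j)$. In either case the witness lies in $\mpzc{M}$, so $\mpzc{M}$ is an $Hm$-system.

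The main obstacle --- essentially the only point demanding care beyond the classical proof --- is exactly this asymmetry: since $(H \cdot b)A$ need not be $H$-stable (only $A(H \cdot b)$ is), one must route the entire argument through the left ideals $A(H \cdot b)$ rather than the two-sided ideals $A(H \cdot b)A$, so as not to pick up a spurious trailing factor ``$A$'' that would destroy the exact $A(H \cdot x)A(H \cdot y)$ shape required by the definition of $Hm$-system, and one must handle both orderings of each pair separately.
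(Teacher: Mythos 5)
Your proposal is correct and follows essentially the same route as the paper: the same recursive construction of $\mpzc{M}=\{a_1,a_2,\dots\}$ with $a_{n+1}\in A(H\cdot a_n)A(H\cdot a_n)\cap\mpzc{N}$, the same key fact that each $A(H\cdot b)$ is an $H$-stable left ideal (the paper phrases this as $H\cdot a_{t+1}\subseteq H\cdot\big(A(H\cdot a_t)A(H\cdot a_t)\big)\subseteq A(H\cdot a_t)$, hence $A(H\cdot a_{t'})\subseteq A(H\cdot a_t)$ for $t\le t'$), and the same two-case verification producing the witness $a_{j+1}$ when $i\le j$ and $a_{i+1}$ when $i\ge j$. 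No gaps.
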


\begin{proof}
Fix $a \in \mpzc{N}$ and define $\mpzc{M} = \{ a_1, a_2, ... \}$ inductively as follows:
$$a_1 = a, \quad a_2 \in A(H \cdot a_1)A(H \cdot a_1) \cap \mpzc{N}, \quad a_3 \, \in \, A(H \cdot a_2)A(H \cdot a_2) \cap \mpzc{N}, \,\,\, \ldots$$
To see that $\mpzc{M}$ is an $Hm$-system we first observe that, for any $t \geq 1$,
$$H \cdot a_{t+1} \subseteq H \cdot \big( A(H \cdot a_t)A(H \cdot a_t) \big) \subseteq A(H \cdot a_t),$$
which implies $H \cdot a_{t'} \subseteq A(H \cdot a_t)$ whenever $t \leq t'$. Therefore, for any $i,j \geq 1$, we have
$$\left\{ \begin{array}{rc}
a_{j+1} \, \in \, A(H \cdot a_j)A(H \cdot a_j) \, \subseteq \, A(H \cdot a_i)A(H \cdot a_j) & \, \text{if} \, \quad i \leq j, \\
a_{i+1} \: \in \: A(H \cdot a_i)A(H \cdot a_i) \: \subseteq \, A(H \cdot a_i)A(H \cdot a_j) & \, \text{if} \, \quad i \geq j,
\end{array} \right.$$
in particular,
$$A(H \cdot a_i)A(H \cdot a_j) \cap \mpzc{M} \neq \emptyset.$$
Hence $\mpzc{M}$ is an $Hm$-system such that $a \in \mpzc{M} \subseteq \mpzc{N}$.
\end{proof}

\esp

\begin{prop} \label{caracIdHsemiprimos2}
For any $H$-stable ideal $\mf{s} \varsubsetneq A$, the following statements are equivalent:
\begin{itemize}
\item[\itm{(1)}] $\mf{s}$ is $H$-semiprime;
\item[\itm{(2)}] $\mf{s}$ is an intersection of $H$-prime ideals;
\item[\itm{(3)}] $\mf{s} \; = \Hrz{\mf{s}}$.
\end{itemize}
\end{prop}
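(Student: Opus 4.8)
The plan is to establish the cycle of implications $\itm{(2)} \Rightarrow \itm{(1)} \Rightarrow \itm{(3)} \Rightarrow \itm{(2)}$, assembling the results already obtained about $Hm$- and $Hn$-systems. The implication $\itm{(2)} \Rightarrow \itm{(1)}$ is the easiest: I would simply invoke the two remarks made right after Definition \ref{defHsemiprimo}, namely that every $H$-prime ideal is $H$-semiprime and that an arbitrary intersection of $H$-semiprime ideals is again $H$-semiprime. Thus if $\mf{s}$ is an intersection of $H$-prime ideals, it is automatically $H$-semiprime.

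For $\itm{(3)} \Rightarrow \itm{(2)}$ I would appeal directly to Proposition \ref{caracHradical}, which identifies $\Hrz{\mf{s}}$ with the intersection of all $H$-prime ideals of $A$ containing $\mf{s}$. Hence the hypothesis $\mf{s} = \Hrz{\mf{s}}$ exhibits $\mf{s}$ as such an intersection, and $\itm{(2)}$ follows at once. (If one prefers, one can also argue $\itm{(2)} \Rightarrow \itm{(3)}$ together with $\itm{(1)} \Rightarrow \itm{(2)}$, but routing through $\itm{(3)}$ is cleaner given the characterization of $\Hrz{\,\cdot\,}$ already proved.)

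The substantive step is $\itm{(1)} \Rightarrow \itm{(3)}$. One inclusion, $\mf{s} \subseteq \Hrz{\mf{s}}$, holds for any $H$-stable ideal. For the reverse inclusion I would take $a \in \comp{A}{\mf{s}}$ and produce an $Hm$-system containing $a$ that misses $\mf{s}$: since $\mf{s}$ is $H$-semiprime, Proposition \ref{HsemiprimeHnsist1} gives that $\comp{A}{\mf{s}}$ is an $Hn$-system, and then Proposition \ref{HmsisteHnsist} yields an $Hm$-system $\mpzc{M}$ with $a \in \mpzc{M} \subseteq \comp{A}{\mf{s}}$, in particular $\mpzc{M} \cap \mf{s} = \emptyset$. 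By Definition \ref{defHradical} this means $a \notin \Hrz{\mf{s}}$, so $\Hrz{\mf{s}} \subseteq \mf{s}$ and equality holds. The only point requiring real work here — namely the inductive construction of an $Hm$-system inside a given $Hn$-system — has already been carried out in Proposition \ref{HmsisteHnsist}, so in the present proof this step is just a citation. Consequently I expect no genuine obstacle: once the $Hm$/$Hn$-system dictionary is in place, the proposition is a short formal consequence, exactly paralleling the classical result on semiprime ideals in \cite{L1}.
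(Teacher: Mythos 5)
Your proposal is correct and follows essentially the same route as the paper: $\itm{(2)} \Rightarrow \itm{(1)}$ from the remarks after Definition \ref{defHsemiprimo}, $\itm{(3)} \Rightarrow \itm{(2)}$ via Proposition \ref{caracHradical}, and $\itm{(1)} \Rightarrow \itm{(3)}$ by extracting an $Hm$-system inside the $Hn$-system $\comp{A}{\mf{s}}$ using Propositions \ref{HsemiprimeHnsist1} and \ref{HmsisteHnsist}. No gaps.
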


\begin{proof}
$\itm{(3)} \Rightarrow \itm{(2)}$ follows by Proposition \ref{caracHradical} and $\itm{(2)} \Rightarrow \itm{(1)}$ is clear. For $\itm{(1)} \Rightarrow \itm{(3)}$, it is enough to prove that $\Hrz{\mf{s}} \subseteq \mf{s}$. Thus, consider $a \notin \mf{s}$. Then $\comp{A}{\mf{s}}$ is an $Hn$-system containing $a$. By Proposition \ref{HmsisteHnsist}, there exists an $Hm$-system $\mpzc{M} \subseteq \comp{A}{\mf{s}}$ such that $a \in \mpzc{M}$ and so $a \notin \Hrz{\mf{s}}$.
\end{proof}
\esp

In particular, the $H$-semiprime ideals of a partial $H$-module algebra are precisely the $H$-radical ideals. The following result is immediate from Propositions \ref{caracHradical} and \ref{caracIdHsemiprimos2}.

\esp
\begin{cor} \label{HradmenorHsemiprimo}
For any $H$-stable ideal $I$ of $A$, $\Hrz{I}$ is the smallest $H$-semiprime ideal of $A$ containing $I$. In particular, $\Hrz{\Hrz{I}} = \Hrz{I}$.
\end{cor}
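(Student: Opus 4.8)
The plan is to deduce the statement formally from Propositions~\ref{caracHradical} and~\ref{caracIdHsemiprimos2}, with no new computation needed. First I would record the monotonicity of the $H$-radical: if $I \subseteq J$ are $H$-stable ideals of $A$, then every $H$-prime ideal of $A$ containing $J$ also contains $I$, so by Proposition~\ref{caracHradical} the family of $H$-prime ideals whose intersection is $\Hrz{J}$ is a subfamily of the one whose intersection is $\Hrz{I}$; hence $\Hrz{I} \subseteq \Hrz{J}$.

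Next I would check that $\Hrz{I}$ is itself an $H$-semiprime ideal containing $I$. Assuming $I \neq A$, a maximal $H$-stable ideal containing $I$ exists by Zorn's Lemma and is $H$-prime, so the family of $H$-prime ideals containing $I$ is nonempty and $\Hrz{I}$, being their intersection by Proposition~\ref{caracHradical}, is a proper $H$-stable ideal of $A$; by the implication $\itm{(2)} \Rightarrow \itm{(1)}$ of Proposition~\ref{caracIdHsemiprimos2} it is $H$-semiprime. That $I \subseteq \Hrz{I}$ was already noted just after Definition~\ref{defHradical}. For minimality, let $\mf{s}$ be any $H$-semiprime ideal of $A$ with $I \subseteq \mf{s}$; by the implication $\itm{(1)} \Rightarrow \itm{(3)}$ of Proposition~\ref{caracIdHsemiprimos2} we have $\mf{s} = \Hrz{\mf{s}}$, and combining this with the monotonicity above yields $\Hrz{I} \subseteq \Hrz{\mf{s}} = \mf{s}$. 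Thus $\Hrz{I}$ is contained in every $H$-semiprime ideal containing $I$, i.e.\ it is the smallest such ideal.

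Finally, the ``in particular'' assertion follows by applying the equivalence $\itm{(1)} \Leftrightarrow \itm{(3)}$ of Proposition~\ref{caracIdHsemiprimos2} to the $H$-semiprime ideal $\Hrz{I}$, which gives $\Hrz{\Hrz{I}} = \Hrz{I}$. There is no real obstacle here: the corollary is just a repackaging of the two preceding propositions, and the only point to keep an eye on is the degenerate case $I = A$ (where $\Hrz{A} = A$, which is not $H$-semiprime in the sense of Definition~\ref{defHsemiprimo}), which one simply excludes from the statement.
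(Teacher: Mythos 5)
Your proof is correct and follows exactly the route the paper intends: the paper gives no written proof, simply declaring the corollary immediate from Propositions~\ref{caracHradical} and~\ref{caracIdHsemiprimos2}, and your argument is precisely the careful unwinding of that deduction (monotonicity of $\Hrz{\cdot}$, $H$-semiprimality of $\Hrz{I}$ via the characterization as an intersection of $H$-primes, and minimality via $\mf{s}=\Hrz{\mf{s}}$). Your remark about the degenerate case $I=A$ is a sensible extra precaution that the paper glosses over.
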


\vspace{0.1cm}

\begin{defi} \label{defHMAlgHprimoeHsemiprimo}
A partial $H$-module algebra is called $H$-prime (resp. $H$-semiprime) if $0$ is an $H$-prime (resp. $H$-semiprime) ideal.
\end{defi}
\esp

It is clear that an $H$-stable ideal $\mf{p}$ of $A$ is $H$-prime if and only if $A/\mf{p}$ is an $H$-prime partial $H$-module algebra.

\esp
\begin{prop} \label{caracHMAlgHsemiprimo}
For a partial $H$-module algebra $A$, the following statements are equivalent:
\begin{itemize}
\item[\itm{(1)}] $A$ is $H$-semiprime;
\item[\itm{(2)}] $P_H(A) = 0$;
\item[\itm{(3)}] $A$ has no nonzero nilpotent $H$-stable ideal;
\item[\itm{(4)}] $A$ has no nonzero nilpotent $H$-stable left ideal;
\item[\itm{(4')}] $A$ has no nonzero nilpotent $H$-stable right ideal;
\end{itemize}
\end{prop}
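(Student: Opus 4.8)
The plan is to run the standard chain of equivalences for the semiprime property, but at the level of $H$-stable ideals and on top of the machinery already developed in this section. I would establish $\itm{(1)} \Leftrightarrow \itm{(2)}$, then $\itm{(1)} \Rightarrow \itm{(3)}$, then $\itm{(3)} \Rightarrow \itm{(4)}$ and $\itm{(3)} \Rightarrow \itm{(4')}$, and finally $\itm{(4)} \Rightarrow \itm{(1)}$ and $\itm{(4')} \Rightarrow \itm{(1)}$; since a two-sided ideal is in particular a one-sided ideal, $\itm{(3)}$ is a special case of each of $\itm{(4)}$ and $\itm{(4')}$, so this closes all the equivalences. The first equivalence $\itm{(1)} \Leftrightarrow \itm{(2)}$ is essentially free: by Definition \ref{defHMAlgHprimoeHsemiprimo}, $A$ is $H$-semiprime exactly when $0$ is an $H$-semiprime ideal, and Corollary \ref{HradmenorHsemiprimo} (equivalently, Proposition \ref{caracIdHsemiprimos2}) applied to $I = 0$ says this holds iff $0 = \Hrz{0} = P_H(A)$.

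For $\itm{(1)} \Rightarrow \itm{(3)}$ I would use the usual ``nilpotent implies square-zero'' reduction: given a nonzero nilpotent $H$-stable ideal $I$, choose $n \geq 2$ minimal with $I^n = 0$ and put $J := I^{n-1}$. Then $J$ is again an $H$-stable ideal (a product of $H$-stable ideals is $H$-stable, by $\itm{(PA3)}$), it is nonzero by minimality of $n$, and $J^2 = I^{2n-2} \subseteq I^n = 0$ because $2n-2 \geq n$; this contradicts $0$ being $H$-semiprime. For $\itm{(3)} \Rightarrow \itm{(4)}$ (and, symmetrically, $\itm{(3)} \Rightarrow \itm{(4')}$), suppose $I$ is a nonzero nilpotent $H$-stable left ideal with $I^n = 0$. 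Then $IA$ is a two-sided ideal; it is $H$-stable since, for $x \in I$, $a \in A$, $h \in H$, condition $\itm{(PA3)}$ gives $h \cdot (xa) = \sum (h_1 \cdot x)(h_2 \cdot a) \in IA$ (using $h_1 \cdot x \in I$); and it is nilpotent since $AI \subseteq I$ forces $(IA)^n \subseteq I^n A = 0$. As $A$ is unital, $IA = 0$ would give $I = I\,1_A = 0$, so $IA$ is a nonzero nilpotent $H$-stable ideal, contradicting $\itm{(3)}$. The right-ideal case is identical with $AI$ in place of $IA$.

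Finally, $\itm{(4)} \Rightarrow \itm{(1)}$ and $\itm{(4')} \Rightarrow \itm{(1)}$ follow by contraposition: if $A$ is not $H$-semiprime, then applying Proposition \ref{caracIdHsemiprimos1} items $\itm{(5)}$ and $\itm{(5')}$ with $\mf{s} = 0$ produces a nonzero $H$-stable left (resp. right) ideal $I$ with $I^2 = 0$, which is in particular a nonzero nilpotent $H$-stable one-sided ideal, contradicting $\itm{(4)}$ (resp. $\itm{(4')}$).

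I do not expect a genuine obstacle here: this is the classical characterization of semiprimeness, and the substantive input — the $Hm$- and $Hn$-system calculus and the description of $\Hrz{\,\cdot\,}$ — is already available. The only points requiring care are that the constructions used preserve $H$-stability (powers of an $H$-stable ideal, and the two-sided ideal $IA$ attached to an $H$-stable left ideal $I$, both via $\itm{(PA3)}$) and the elementary step from ``nilpotent'' to ``square-zero'', which is exactly where the one-sided/two-sided distinction and the unitality of $A$ enter.
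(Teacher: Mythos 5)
Your proof is correct and follows essentially the same classical argument as the paper: the reduction of a nilpotent ideal to a square-zero one via the minimal nilpotency index, combined with the one-sided characterizations of $H$-semiprimeness from Proposition \ref{caracIdHsemiprimos1}. The only difference is the orientation of the cycle of implications: the paper proves $(1)\Rightarrow(4)$ and $(1)\Rightarrow(4')$ directly and closes the loop with the trivial implications $(4)\Rightarrow(3)$ and $(4')\Rightarrow(3)$ (a two-sided ideal is in particular a one-sided ideal), whereas you prove the harder direction $(3)\Rightarrow(4)$ via the auxiliary two-sided ideal $IA$ --- a correct but unnecessary detour.
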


\begin{proof}
$\itm{(1)} \Leftrightarrow \itm{(2)}$ follows from Corollary \ref{HradmenorHsemiprimo}. For $\itm{(1)} \Rightarrow \itm{(4)}$, let $I$ be a nilpotent $H$-stable left ideal and let $n \geq 1$ be the smallest positive integer such that $I^n = 0$. If $n \geq 2$, then $(I^{n-1})^2 \subseteq I^n = 0$ which implies that $I^{n-1} = 0$, a contradiction. Thus $n = 1$ and $I = 0$. The implication $\itm{(1)} \Rightarrow \itm{(4')}$ is similar and the implications $\itm{(3)} \Rightarrow \itm{(1)}$, $\itm{(4)} \Rightarrow \itm{(3)}$ and $\itm{(4')} \Rightarrow \itm{(3)}$ are easy to see.
\end{proof}
\esp

The next proposition and its corollary establish a relation between the prime (resp. semiprime) ideals and the $H$-prime (resp. $H$-semiprime) ideals of a partial $H$-module algebra. Like in \cite{L1}, we use the notation $\rz{I}$ to indicate the (classical) radical of the ideal $I$ of $A$, which is the intersection of all the prime ideals of $A$ containing $I$. In the case which $I = 0$, we denote by $P(A) := \rz{0}$ the prime radical of $A$.

\esp
\begin{prop} \label{IdprimoIdHprimo}
If $\mc{P}$ is a prime (resp. semiprime) ideal of $A$, then $(\mc{P}:H)$ is $H$-prime (resp. $H$-semiprime). If $H$ is finite-dimensional, then the converse is true, that is, every $H$-prime (resp. $H$-semiprime) ideal of $A$ is of the form $(\mc{P}:H)$ for some prime (resp. semiprime) ideal $\mc{P}$ of $A$.
\end{prop}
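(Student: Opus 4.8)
The statement has two parts. For the first part, I would start with $\mc{P}$ a prime ideal of $A$ and show that $(\mc{P}:H)$ is $H$-prime. By Proposition \ref{maioridealHest}, $(\mc{P}:H)$ is the largest $H$-stable ideal of $A$ contained in $\mc{P}$, so in particular $(\mc{P}:H)$ is $H$-stable and $(\mc{P}:H) \neq A$ (since $\mc{P} \neq A$). To verify $H$-primeness, I would use the characterization in Proposition \ref{caracIdHprimos}: suppose $I, J$ are $H$-stable ideals with $IJ \subseteq (\mc{P}:H) \subseteq \mc{P}$. Since $\mc{P}$ is prime, $I \subseteq \mc{P}$ or $J \subseteq \mc{P}$; say $I \subseteq \mc{P}$. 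But $I$ is $H$-stable, so $I \subseteq (\mc{P}:H)$ by maximality. The semiprime case is identical, replacing $IJ$ with $I^2$ and using Definition \ref{defHsemiprimo} together with the fact that $I^2 \subseteq (\mc{P}:H) \subseteq \mc{P}$ forces $I \subseteq \mc{P}$, hence $I \subseteq (\mc{P}:H)$.

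For the converse, assume $H$ is finite-dimensional and let $\mf{p}$ be an $H$-prime ideal of $A$; I must produce a prime ideal $\mc{P}$ with $(\mc{P}:H) = \mf{p}$. The natural candidate is a prime ideal of $A$ that is minimal over $\mf{p}$, or more precisely one chosen to be maximal among ideals of $A$ disjoint from a suitable multiplicatively-flavored set. I would pass to the partial smash product: by Corollary \ref{idHest2}, $\psmash{\mf{p}}{H}$ is a $\dual{H}$-stable ideal of $\psmash{A}{H}$, and the correspondence $\Phi, \Psi$ preserves products and intersections, so $\psmash{\mf{p}}{H}$ should be $\dual{H}$-prime in $\psmash{A}{H}$ (an $H$-prime ideal of $A$ corresponds to a $\dual{H}$-prime ideal of the smash product). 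Then I would invoke the global-action analogue of this very result — for $H$-module algebras (here $\psmash{A}{H}$ as a $\dual{H}$-module algebra), a $\dual{H}$-prime ideal is of the form $(\mc{Q} : \dual{H})$ for some prime ideal $\mc{Q}$ of $\psmash{A}{H}$, which is the content of the cited reference \cite{MS} (or can be reproved using that $\dual{H}$ is finite-dimensional and semisimplicity is not needed here, only finite-dimensionality, to run an averaging/minimal-prime argument). Finally I would set $\mc{P} := \mc{Q} \cap A$, a prime ideal of $A$, and check that $(\mc{P}:H) = \mf{p}$ by tracing through the bijections: $\mc{Q} \cap A$ being prime in $A$ follows because $A = (\psmash{A}{H})^{coH}$ and contractions of primes along this inclusion are prime (alternatively, $A \hookrightarrow \psmash{A}{H}$ and one checks directly), and the identity $(\mc{P}:H) = \mf{p}$ follows from $\psmash{(\mc{P}:H)}{H} = \psmash{\mc{Q} \cap A}{H} \cap (\text{something}) $ combined with $\Phi\Psi = \mathrm{id}$.

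Alternatively, and perhaps more cleanly, I would avoid the smash product and argue directly: given the $H$-prime ideal $\mf{p}$, consider the set $\mc{S}$ of all ideals $\mc{Q}$ of $A$ with $\mc{Q} \supseteq \mf{p}$ and $(\mc{Q}:H) = \mf{p}$ (this is nonempty since it contains $\mf{p}$ itself, because $\mf{p}$ is $H$-stable so $(\mf{p}:H) = \mf{p}$), and use Zorn's Lemma to pick a maximal element $\mc{P}$. The claim is that $\mc{P}$ is prime. Here finite-dimensionality of $H$ enters: if $\mc{P}$ were not prime, there would be ideals $I \supsetneq \mc{P}$, $J \supsetneq \mc{P}$ with $IJ \subseteq \mc{P}$; applying $(-:H)$ and using that for finite-dimensional $H$ the operation $I \mapsto (I:H)$ interacts well with products — specifically, $H \cdot (IJ) \subseteq I(H\cdot J) \subseteq \ldots$, or better, that $(I:H)(J:H) \subseteq ((IJ):H)$ always and that one can bound $(I:H)$ from below using a "Hopf-theoretic trace" — one derives $(I:H) \cdot (J:H) \subseteq \mf{p}$ with both factors strictly larger than $\mf{p}$, contradicting $H$-primeness of $\mf{p}$ via Proposition \ref{caracIdHprimos}. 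The finite-dimensionality is needed precisely to guarantee that $(I:H) \supsetneq \mf{p}$ whenever $I \supsetneq \mc{P}$, i.e., that $I$ cannot "just barely" fail to be larger than $\mc{P}$ after applying $(-:H)$; this typically rests on a pigeonhole/dimension-count argument showing $(I:H) \cap (I \setminus \mc{P}) \neq \emptyset$.

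\textbf{Main obstacle.} The delicate point is the converse direction, and specifically establishing that the operation $I \mapsto (I:H)$ does not collapse too much when $H$ is finite-dimensional — equivalently, showing that a carefully chosen prime ideal $\mc{P}$ over $\mf{p}$ satisfies $(\mc{P}:H) = \mf{p}$ exactly rather than something strictly larger. I expect the paper handles this either by the smash-product transfer (Corollary \ref{idHest2} plus the known global result of \cite{MS}) or by a direct dimension/finiteness argument; in either case, verifying that $\mc{Q} \cap A$ is prime and that the correspondence matches up $\mf{p}$ with $(\mc{Q}\cap A : H)$ requires care with the explicit forms of $\Phi$ and $\Psi$.
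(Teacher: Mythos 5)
The first half of your proposal (prime $\Rightarrow$ $H$-prime for $(\mc{P}:H)$, and the semiprime analogue) is correct and is exactly the paper's argument. The problems are in the converse, and both of your routes have issues. Route 1 (via the smash product) breaks at the step ``set $\mc{P} := \mc{Q} \cap A$, a prime ideal of $A$'': the contraction of a prime ideal of $\psmash{A}{H}$ to $A$ is in general only an $H$-stable ideal (Proposition \ref{idHest1}), not a prime one. Concretely, let $A = \kk \times \kk$ with $G = \mathbb{Z}/2$ acting by swapping the factors and $H = \kk G$; then $\psmash{A}{H} = A \# H \cong M_2(\kk)$ is simple, so its unique prime ideal is $\mc{Q} = 0$, and $\mc{Q} \cap A = 0$ is not prime in $A$ --- whereas $\mf{p} = 0$ is $H$-prime and does equal $(\mc{P}':H)$ for the prime ideal $\mc{P}' = \kk \times 0$, which your construction does not find. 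So this route cannot be repaired merely by ``tracing through the bijections.''

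Route 2 is essentially the paper's proof and does work, but the step you single out as the ``main obstacle'' is justified by the wrong mechanism, and the place where finite-dimensionality is genuinely needed is left unverified. Once $\mc{P}$ is maximal in $\mc{S} = \conj{\mc{Q} \supseteq \mf{p}}{(\mc{Q}:H) = \mf{p}}$, the claim that $(I:H) \supsetneq \mf{p}$ for every ideal $I \supsetneq \mc{P}$ is immediate from maximality: $I \supseteq \mf{p}$ and $I \notin \mc{S}$ force $(I:H) \neq \mf{p}$, while $\mf{p} \subseteq (I:H)$ always holds because $\mf{p}$ is $H$-stable and contained in $I$. No ``Hopf-theoretic trace'' or pigeonhole/dimension count is involved; then $(I:H)(J:H) \subseteq IJ \subseteq \mc{P}$ together with $H$-stability gives $(I:H)(J:H) \subseteq (\mc{P}:H) = \mf{p}$, contradicting $H$-primeness. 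Where $\dim_{\kk} H < \infty$ actually enters is in verifying the hypothesis of Zorn's Lemma: for a chain $\{\mc{Q}_{\lambda}\}$ in $\mc{S}$ and $x \in (\bigcup \mc{Q}_{\lambda} : H)$, the ideal $A(H \cdot x)A$ is finitely generated, hence lies in a single $\mc{Q}_{\lambda_0}$, giving $x \in (\mc{Q}_{\lambda_0}:H) = \mf{p}$; this is exactly how the paper argues (with $\mc{S}$ written in the equivalent form of its family $\mc{F}$). Finally, you omit the semiprime half of the converse; it follows by writing an $H$-semiprime $\mf{s}$ as $\bigcap \mf{p}_{\alpha}$ over $H$-prime ideals (Proposition \ref{caracIdHsemiprimos2}), applying the prime case to each $\mf{p}_{\alpha} = (\mc{P}_{\alpha}:H)$, and using $\bigcap (\mc{P}_{\alpha}:H) = \paren{\bigcap \mc{P}_{\alpha} : H}$.
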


\begin{proof}
Let $\mc{P}$ be a prime ideal of $A$. If $I$ and $J$ are $H$-stable ideals of $A$ such that $IJ \subseteq (\mc{P}:H) \subseteq \mc{P}$, then we have $I \subseteq \mc{P}$ or $J \subseteq \mc{P}$. The $H$-stability of $I$ and $J$ implies that $I \subseteq (\mc{P}:H)$ or $J \subseteq (\mc{P}:H)$. Hence $(\mc{P}:H)$ is $H$-prime. The ``semiprime'' case is analogous.

Now suppose that $H$ is finite-dimensional and let $\mf{p}$ be an $H$-prime ideal of $A$. Consider the following two family of ideals of $A$:
\begin{eqnarray*}
\mc{G} & = & \conj{J \ideal A}{J \text{ is } H \text{-stable and } J \nsubseteq \mf{p}} \\
\mc{F} & = & \conj{K \ideal A}{\mf{p}\subseteq K \text{ and } J \nsubseteq K, \, \forall \, J \in \mc{G}}.
\end{eqnarray*}
We will use the Zorn's Lemma to prove that $\mc{F}$ has a maximal element. Note that $\mc{F} \neq \emptyset$ since $\mf{p} \in \mc{F}$. Now, let $\{ K_{\lambda} \} \subseteq \mc{F}$ be a chain. Then $\mc{K} = \bigcup K_{\lambda}$ is an ideal of $A$ and $\mf{p} \subseteq \mc{K}$. We claim that $J \nsubseteq \mc{K}$ for every $J \in \mc{G}$. In fact, if $J \in \mc{G}$, then there exists $a \in J$ such that $a \notin \mf{p}$, so $A(H \cdot a)A \nsubseteq \mf{p}$ and $A(H \cdot a)A \in \mc{G}$. This implies that $A(H \cdot a)A \nsubseteq K_{\lambda}$ for every $\lambda$. Because $A(H \cdot a)A$ is a finitely generated ideal (since $H$ is finite-dimensional) and $\{ K_{\lambda} \}$ is a chain, we have that $A(H \cdot a)A \nsubseteq \mc{K}$. In particular $J \nsubseteq \mc{K}$ (note that $A(H \cdot a)A \subseteq J$ since $a \in J$ and $J$ is an $H$-stable ideal). It follows that $\mc{K} \in \mc{F}$ is an upper bound for $\{ K_{\lambda} \}$. By Zorn's Lemma, there exists a maximal element $\mc{P} \in \mc{F}$.

We claim that $\mc{P}$ is a prime ideal. In fact, suppose that $I$ and $J$ are ideals of $A$ not contained in $\mc{P}$. By maximality of $\mc{P}$, there exist $H$-stable ideals $K, L \in \mc{G}$ such that $K \subseteq \mc{P} + I$ and $L \subseteq \mc{P} + J$, and so $KL \subseteq \mc{P} + IJ$. Since $KL \in \mc{G}$ (because $\mf{p}$ is $H$-prime), it follows that $KL \nsubseteq \mc{P}$, and so $IJ \nsubseteq \mc{P}$. Hence $\mc{P}$ is a prime ideal.

Since $\mc{P} \in \mc{F}$ and $\mf{p}$ is $H$-stable, we have $\mf{p} \subseteq (\mc{P}:H)$. On the other hand, if $J$ is an $H$-stable ideal such that $\mf{p} \subsetneq J$ then $J \in \mc{G}$ and so $J \nsubseteq \mc{P}$. Hence $\mf{p} = (\mc{P}:H)$ and every $H$-prime ideal of $A$ is of the form $(\mc{P}:H)$ for some prime ideal $\mc{P}$ of $A$, when $H$ is finite-dimensional.

Finally, assume that  $\mf{s}$ is an $H$-semiprime ideal of $A$ (and $H$ is finite-dimensional). By Proposition \ref{caracIdHsemiprimos2}, $\mf{s} = \bigcap \mf{p}_{\alpha}$, where $\mf{p}_{\alpha}$ runs over all the $H$-prime ideals of $A$ containing $\mf{s}$. From the above part, for every $\alpha$, $\mf{p}_{\alpha} = (\mc{P}_{\alpha}:H)$ for some prime ideal $\mc{P}_{\alpha}$ of $A$. Thus $\mf{s} = \bigcap \mf{p}_{\alpha} = \bigcap (\mc{P}_{\alpha}:H) = \big( \bigcap \mc{P}_{\alpha}:H \big)$ with $\bigcap \mc{P}_{\alpha}$ a semiprime ideal of $A$. This completes the proof.
\end{proof}

\esp

\begin{cor} \label{radprimoeHradprimo}
Suppose that $H$ is finite-dimensional. For any $H$-stable ideal $I$ of $A$ we have $\Hrz{I} = (\rz{I}:H)$. In particular, $P_H(A) = (P(A):H)$ and $A$ is $H$-semiprime if and only if $P(A)$ does not contain nonzero $H$-stable ideals.
\end{cor}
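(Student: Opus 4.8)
The plan is to deduce Corollary \ref{radprimoeHradprimo} directly from Proposition \ref{IdprimoIdHprimo} together with the basic properties of the operation $(-:H)$ established in Proposition \ref{maioridealHest} and of the $H$-radical established in Proposition \ref{caracHradical}. So throughout, $H$ is assumed finite-dimensional.

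First I would prove the inclusion $(\rz{I}:H) \subseteq \Hrz{I}$. Since $\rz{I}$ is the intersection of all prime ideals $\mc{P} \supseteq I$, and intersecting first with $A$ commutes with the operation of taking $(-:H)$ pointwise on a family only up to an inclusion, I instead argue as follows: for each prime ideal $\mc{P}$ containing $I$, Proposition \ref{IdprimoIdHprimo} says $(\mc{P}:H)$ is an $H$-prime ideal, and it contains $(I:H) = I$ (using that $I$ is $H$-stable and Proposition \ref{maioridealHest}). Hence, by Proposition \ref{caracHradical}, $\Hrz{I} \subseteq (\mc{P}:H) \subseteq \mc{P}$. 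Taking the intersection over all such $\mc{P}$ gives $\Hrz{I} \subseteq \rz{I}$; since $\Hrz{I}$ is $H$-stable (Proposition \ref{caracHradical}) and contained in $\rz{I}$, Proposition \ref{maioridealHest} yields $\Hrz{I} \subseteq (\rz{I}:H)$.

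For the reverse inclusion $(\rz{I}:H) \subseteq \Hrz{I}$, I would use the finite-dimensionality of $H$ via the second half of Proposition \ref{IdprimoIdHprimo}: every $H$-prime ideal of $A$ containing $I$ is of the form $(\mc{P}:H)$ for some prime ideal $\mc{P}$ of $A$. Fix such an $H$-prime ideal $\mf{p} = (\mc{P}:H) \supseteq I$. Then $\mc{P} \supseteq (\mc{P}:H) = \mf{p} \supseteq I$, so $\mc{P}$ is a prime ideal containing $I$, whence $\rz{I} \subseteq \mc{P}$ and therefore $(\rz{I}:H) \subseteq (\mc{P}:H) = \mf{p}$ (the operation $(-:H)$ is clearly inclusion-preserving). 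Intersecting over all $H$-prime ideals $\mf{p}$ containing $I$ and invoking Proposition \ref{caracHradical} again gives $(\rz{I}:H) \subseteq \Hrz{I}$. Combining the two inclusions yields $\Hrz{I} = (\rz{I}:H)$.

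The two stated consequences are then immediate: putting $I = 0$ gives $P_H(A) = \Hrz{0} = (\rz{0}:H) = (P(A):H)$. Finally, $A$ is $H$-semiprime iff $P_H(A) = 0$ (Proposition \ref{caracHMAlgHsemiprimo}), i.e.\ iff $(P(A):H) = 0$, and by Proposition \ref{maioridealHest} the ideal $(P(A):H)$ is the largest $H$-stable ideal of $A$ contained in $P(A)$, so it is zero precisely when $P(A)$ contains no nonzero $H$-stable ideal. I do not anticipate a genuine obstacle here; the only point requiring care is keeping straight which direction needs the finite-dimensionality hypothesis (the inclusion $(\rz{I}:H) \subseteq \Hrz{I}$ does, since it relies on the ``converse'' part of Proposition \ref{IdprimoIdHprimo}), and making sure every prime ideal appearing is genuinely shown to contain $I$ before its classical radical is invoked.
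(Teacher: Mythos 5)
Your proof is correct and follows essentially the same route as the paper: both inclusions rest on Proposition \ref{IdprimoIdHprimo}, with its converse (hence the finite-dimensionality of $H$) entering exactly where you say it does. The only cosmetic difference is that you apply the ``prime'' half of that proposition to each prime (resp.\ $H$-prime) ideal containing $I$ and then intersect, whereas the paper applies the ``semiprime'' half once to $\rz{I}$ and to $\Hrz{I}$ directly via Corollary \ref{HradmenorHsemiprimo}.
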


\begin{proof}
By Proposition \ref{IdprimoIdHprimo}, $(\rz{I} : H)$ is an $H$-semiprime ideal. Moreover, $I$ is an $H$-stable ideal contained in $\rz{I}$, so $I \subseteq (\rz{I} : H)$. By Corollary \ref{HradmenorHsemiprimo}, $\Hrz{I} \subseteq (\rz{I} : H)$. On the other hand, again by Proposition \ref{IdprimoIdHprimo}, $\Hrz{I} = (\mc{Q} : H)$ for some semiprime ideal $\mc{Q}$ of $A$. In particular $I \subseteq \mc{Q}$, so $\rz{I} \subseteq \mc{Q}$ and therefore $(\rz{I}:H) \subseteq (\mc{Q}:H) = \Hrz{I}$.
\end{proof}
\esp

Now we establish a link between the $H$-prime (resp. $H$-semiprime) ideals of $A$ and the $\dual{H}$-prime (resp. $\dual{H}$-semiprime) ideals of $\psmash{A}{H}$, when $H$ is finite-dimensional. This allow us give a more precise description of the maps $\Phi$ and $\Psi$ defined in Corollary \ref{idHest2}. In particular, we establish a relation between the $H$-prime radical of $A$ and the $\dual{H}$-prime radical of $\psmash{A}{H}$.

\esp
\begin{teo} \label{HsemiprimoeHcosemiprimo}
Let $H$ be a finite-dimensional Hopf algebra and let $A$ be a partial $H$-module algebra. Then the restrictions of the maps $\Phi$ and $\Psi$ defined in \emph{Corollary \ref{idHest2}} are bijections between the set of the $H$-prime (resp. $H$-semiprime) ideals of $A$ and the set of the $\dual{H}$-prime (resp. $\dual{H}$-semiprime) ideals of $\psmash{A}{H}$. In particular,
$$P_H(A) = P_{\dual{H}} \!\! \paren{\psmash{A}{H}} \cap A \quad \quad \text{ and } \quad \quad P_{\dual{H}} \!\! \paren{\psmash{A}{H}} = \psmash{P_H(A)}{H} \, ,$$
so $A$ is an $H$-semiprime partial $H$-module algebra if and only if $\psmash{A}{H}$ is an $\dual{H}$-semiprime $\dual{H}$-module algebra.
\end{teo}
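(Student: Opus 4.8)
The plan is to deduce everything from the order isomorphism of Corollary~\ref{idHest2} together with the characterization of the $H$-prime radical obtained above, so that essentially no new computation is needed. Write $R := \psmash{A}{H}$, and recall from Corollary~\ref{idHest2} that $\Phi(I) = \psmash{I}{H}$ and $\Psi(\mc{I}) = \mc{I} \cap A$ are mutually inverse bijections between the $H$-stable ideals of $A$ and the $\dual{H}$-stable ideals of $R$, both preserving inclusions, sums, finite products and arbitrary intersections; in particular $\Phi(A) = R$ and $\Phi(0) = 0$. Since $R$ is a global (hence partial) $\dual{H}$-module algebra, Definitions~\ref{defHprimo} and~\ref{defHsemiprimo} apply to $R$ with $H$ replaced by $\dual{H}$, the relevant ideals being exactly the $\dual{H}$-stable ones appearing in Corollary~\ref{idHest2}.

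First I would show that $\Phi$ carries $H$-prime ideals to $\dual{H}$-prime ideals. Let $\mf{p} \ideal A$ be $H$-prime. Then $\Phi(\mf{p})$ is a $\dual{H}$-stable ideal of $R$, and $\Phi(\mf{p}) \neq R$ because $\mf{p} \neq A$ and $\Phi$ is injective with $\Phi(A) = R$. Given $\dual{H}$-stable ideals $\mc{I}, \mc{J}$ of $R$ with $\mc{I}\mc{J} \subseteq \Phi(\mf{p})$, write $\mc{I} = \Phi(I)$ and $\mc{J} = \Phi(J)$ for $H$-stable ideals $I,J$ of $A$; then $\Phi(IJ) = \Phi(I)\Phi(J) \subseteq \Phi(\mf{p})$, and applying $\Psi$ (which preserves inclusions) gives $IJ \subseteq \mf{p}$. $H$-primeness of $\mf{p}$ forces $I \subseteq \mf{p}$ or $J \subseteq \mf{p}$, hence $\mc{I} \subseteq \Phi(\mf{p})$ or $\mc{J} \subseteq \Phi(\mf{p})$. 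The symmetric argument with $\Psi$ in place of $\Phi$ shows that $\Psi$ carries $\dual{H}$-prime ideals to $H$-prime ideals, so $\Phi$ and $\Psi$ restrict to mutually inverse bijections between the two sets of prime ideals. Taking $J = I$ and using $\Phi(I^2) = \Phi(I)^2$, the identical reasoning handles the $H$-semiprime / $\dual{H}$-semiprime case.

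It then remains to record the consequences for the radicals. By Proposition~\ref{caracHradical} (with $I = 0$), $P_H(A)$ is the intersection of all $H$-prime ideals of $A$, and likewise $P_{\dual{H}}(R)$ is the intersection of all $\dual{H}$-prime ideals of $R$; since $\Phi$ restricts to a bijection between these two families and preserves arbitrary intersections, $\Phi(P_H(A)) = P_{\dual{H}}(R)$, i.e.\ $\psmash{P_H(A)}{H} = P_{\dual{H}}\!\paren{\psmash{A}{H}}$, and applying $\Psi$ yields $P_H(A) = P_{\dual{H}}\!\paren{\psmash{A}{H}} \cap A$. Finally, $A$ is $H$-semiprime exactly when $P_H(A) = 0$ and $R$ is $\dual{H}$-semiprime exactly when $P_{\dual{H}}(R) = 0$ (Proposition~\ref{caracHMAlgHsemiprimo}); since $\Phi$ is injective with $\Phi(0) = 0$ and $\Phi(P_H(A)) = P_{\dual{H}}(R)$, these two vanishing conditions are equivalent.

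I do not expect a genuine obstacle: the content is entirely formal once Corollary~\ref{idHest2} and Proposition~\ref{caracHradical} are available. The only points needing care are bookkeeping ones — quantifying over $H$-stable (resp.\ $\dual{H}$-stable) ideals rather than over all ideals when checking (semi)primeness, tracking the condition $\mf{p} \neq A$ under $\Phi$, and using that the word ``intersections'' in Corollary~\ref{idHest2} covers arbitrary (not merely finite) intersections, which is what makes the radical identities go through.
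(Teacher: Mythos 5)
Your proposal is correct and follows essentially the same route as the paper: transport (semi)primeness through the inclusion-, product-, and intersection-preserving bijections $\Phi$ and $\Psi$ of Corollary \ref{idHest2}, then obtain the radical identities from Proposition \ref{caracHradical} and the fact that $\Phi$, $\Psi$ preserve arbitrary intersections. Your explicit checks that $\Phi(\mf{p}) \neq R$ and that only $\dual{H}$-stable ideals need be quantified over are points the paper leaves implicit, but there is no substantive difference.
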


\begin{proof}
We will denote by $R = \psmash{A}{H}$. From Corollary \ref{idHest2}, it is sufficient to prove that, for any $H$-prime (resp. $H$-semiprime) ideal $\mf{p}$ of $A$, its image $\Phi(\mf{p})$ is an $\dual{H}$-prime (resp. $\dual{H}$-semiprime) ideal of $R$ and analogous for $\Psi$.

In fact, let $\mf{p}$ be an $H$-prime ideal of $A$ and let $\mc{I}, \mc{J}$ be $\dual{H}$-stable ideals of $R$ such that $\mc{I} \mc{J} \subseteq \Phi(\mf{p})$. Because $\Phi$ and $\Psi$ preserve inclusions and products, $\Psi(\mc{I})$ and $\Psi(\mc{J})$ are $H$-stable ideals of $A$ such that $\Psi(\mc{I}) \Psi(\mc{J}) = \Psi(\mc{I}\mc{J}) \subseteq \Psi(\Phi(\mf{p})) = \mf{p}$. Since $\mf{p}$ is $H$-prime, $\Psi(\mc{I}) \subseteq \mf{p}$ or $\Psi(\mc{J}) \subseteq \mf{p}$, which implies that $\mc{I} = \Phi(\Psi(\mc{I})) \subseteq \Phi(\mf{p})$ or $\mc{J} = \Phi(\Psi(\mc{J})) \subseteq \Phi(\mf{p})$. Hence $\Phi(\mf{p})$ is $\dual{H}$-prime. The proof that the $H$-semiprimality of an $H$-stable ideal $\mf{p}$ of $A$ implies the $\dual{H}$-semiprimality of $\Phi(\mf{p})$ is analogous and the argumentation for $\Psi$ is similar.

Now the equalities
$$P_H(A) = P_{\dual{H}} \! (R) \cap A \quad \quad \text{and} \quad \quad P_{\dual{H}} \! (R) = \psmash{P_H(A)}{H}$$
follow from $P_H(A)$ being the intersection of the all $H$-prime ideals of $A$, $P_{\dual{H}}(R)$ being the intersection of the all $\dual{H}$-prime ideals of $R$ and also because $\Phi, \Psi$  preserve intersections.
\end{proof}
\esp


\subsection{$H$-primitive ideals and the $H$-Jacobson radical} \label{HradJacobson}

Now we introduce the concept of $H$-primitive ideal in the context of partial actions. The definition of $H$-Jacobson radical of a partial $H$-module algebra  becomes natural from this concept (see Definition \ref{HradJac} below). The aim here is to establish  relations (if any) between the $H$-Jacobson radical of $A$ and the $\dual{H}$-Jacobson radical of $\psmash{A}{H}$, when $H$ is finite-dimensional. We start with the following definition.

\esp
\begin{defi}
Let $M$ be a partial $\modpar{A}{H}$-module.
\begin{itemize}
\item[\itm{(i)}] $M$ is called irreducible if $M \neq 0$ and $M$ has no  other partial $\modpar{A}{H}$-submodules than $0$ or $M$.
\item[\itm{(ii)}] $M$ is called  semisimple (or completely reducible) if $M$ is a direct sum of irreducible partial $\modpar{A}{H}$-submodules.
\end{itemize}
\end{defi}
\esp

The next result is immediate from Proposition \ref{AHmod1}.

\esp
\begin{prop} \label{AHmodsimples}
$M$ is an irreducible (resp. semisimple) partial $\modpar{A}{H}$-module if and only if $M$ is an irreducible (resp. semisimple) $\psmash{A}{H}$-module.
\end{prop}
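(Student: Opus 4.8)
\textbf{Plan of proof for Proposition \ref{AHmodsimples}.}

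The plan is to reduce everything to the correspondence already established in Proposition \ref{AHmod1}, which shows that the categories of right (resp.\ left) partial $\modpar{A}{H}$-modules and of right (resp.\ left) $\psmash{A}{H}$-modules are the same: a module $M$ carries one structure precisely when it carries the other, and the two structures are linked by $m(\psmash{a}{h}) = (ma)\acd h$ together with the inverse formulas $ma = m(\psmash{a}{1_H})$ and $m\acd h = m(\psmash{1_A}{h})$. The one thing that must be checked to deduce the statement about irreducibility is that, under this identification, the notion of partial $\modpar{A}{H}$-submodule coincides with the notion of $\psmash{A}{H}$-submodule.

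So the first step is: let $M$ be a right partial $\modpar{A}{H}$-module (equivalently, a right $\psmash{A}{H}$-module) and let $N \subseteq M$ be a subspace. I claim $N$ is a partial $\modpar{A}{H}$-submodule if and only if $N$ is an $\psmash{A}{H}$-submodule. For the ``if'' direction: if $N\psmash{A}{H} \subseteq N$, then in particular $Na = N(\psmash{a}{1_H}) \subseteq N$ and $N\acd h = N(\psmash{1_A}{h}) \subseteq N$, so $N$ is stable under the $A$-action and the $H$-action, hence a partial $\modpar{A}{H}$-submodule. For the ``only if'' direction: if $N$ is stable under both the $A$-action and the $H$-action, then for any $a\in A$, $h\in H$ and $n\in N$ we have $n(\psmash{a}{h}) = (na)\acd h \in (NA)\acd H \subseteq N$, using first that $NA \subseteq N$ and then that $N\acd H \subseteq N$; since elements $\psmash{a}{h}$ generate $\psmash{A}{H}$, it follows that $N$ is an $\psmash{A}{H}$-submodule. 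The left-module case is entirely analogous, using instead $(\psmash{a}{h})n = a(h\ace n) \in A(H\ace N) \subseteq N$.

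Given this, the statement follows immediately. A module $M$ is nonzero as a partial $\modpar{A}{H}$-module iff it is nonzero as an $\psmash{A}{H}$-module (the underlying vector space is the same), and by the submodule correspondence above, $M$ has no proper nonzero partial $\modpar{A}{H}$-submodules iff it has no proper nonzero $\psmash{A}{H}$-submodules; this proves the irreducible case. For the semisimple case, the submodule correspondence shows that a decomposition of $M$ as an internal direct sum of subspaces is a decomposition into partial $\modpar{A}{H}$-submodules iff it is a decomposition into $\psmash{A}{H}$-submodules, and the summands are irreducible partial $\modpar{A}{H}$-modules iff they are irreducible $\psmash{A}{H}$-modules by the irreducible case just proved; hence $M$ is semisimple as a partial $\modpar{A}{H}$-module iff it is semisimple as an $\psmash{A}{H}$-module.

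There is essentially no obstacle here: the content was already packaged into Proposition \ref{AHmod1}, and the only thing to verify is the elementary submodule-matching observation, whose ``only if'' direction is the slightly less trivial one since it requires rewriting the action of a general generator $\psmash{a}{h}$ as a composite of the two partial actions. In the write-up I would simply remark that this is immediate from Proposition \ref{AHmod1} once one observes that partial $\modpar{A}{H}$-submodules of $M$ are exactly the $\psmash{A}{H}$-submodules of $M$, which is what the bracketed computations above establish.
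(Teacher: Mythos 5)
Your proposal is correct and matches the paper's approach: the paper simply declares the proposition "immediate from Proposition \ref{AHmod1}", and what you have written is exactly the routine verification behind that remark, namely that under the identification of the two module structures the partial $\modpar{A}{H}$-submodules of $M$ are precisely its $\psmash{A}{H}$-submodules. No gaps.
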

\esp

The concept of irreducible partial $\modpar{A}{H}$-module naturally lead us to the concept of $H$-primitive partial $H$-module algebra and $H$-primitive ideal.

\esp
\begin{defi}
Let $A$ be a partial $H$-module algebra.
\begin{itemize}
\item[\itm{(i)}] $A$ is called right (resp. left) $H$-primitive if there exists a faithful irreducible right (resp. left) partial $\modpar{A}{H}$-module.
\item[\itm{(ii)}] An $H$-stable ideal $I$ of $A$ is called right (resp. left) $H$-primitive ideal if $A/I$ is a right (resp. left) $H$-primitive partial $H$-module algebra.
\end{itemize}
\end{defi}
\esp

Evidently an $H$-stable ideal $I$ of $A$ is right (resp. left) $H$-primitive if and only if $I$ is the annihilator of an irreducible right (resp. left) partial $\modpar{A}{H}$-module. From Propositions \ref{AHmod1} and \ref{AHmodsimples} we have the following.

\esp
\begin{prop} \label{HprimitiveDualHprimitiv}
An $H$-stable ideal $I$ of $A$ is right (resp. left) $H$-primitive if and only if $I = \mc{P} \cap A$ for some right (resp. left) primitive ideal $\mc{P}$ of $\psmash{A}{H}$.
\end{prop}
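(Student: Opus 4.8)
The plan is to read the statement directly off the dictionary between partial $\modpar{A}{H}$-modules and $\psmash{A}{H}$-modules set up in Proposition \ref{AHmod1} and Proposition \ref{AHmodsimples}, combined with the observation recorded just before this Proposition: an $H$-stable ideal $I$ of $A$ is right (resp. left) $H$-primitive precisely when $I = \ann_A(M)$ for some irreducible right (resp. left) partial $\modpar{A}{H}$-module $M$. The identity that glues the two pictures together is $\ann_A(M) = \ann_{\psmash{A}{H}}(M) \cap A$, which is also part of Proposition \ref{AHmod1}.

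First I would prove the forward implication. Assume $I$ is a right $H$-primitive ideal of $A$, so that $I = \ann_A(M)$ for some irreducible right partial $\modpar{A}{H}$-module $M$. By Proposition \ref{AHmod1}, $M$ carries a right $\psmash{A}{H}$-module structure, and by Proposition \ref{AHmodsimples} this $\psmash{A}{H}$-module is irreducible; hence $\mc{P} := \ann_{\psmash{A}{H}}(M)$ is, by definition, a right primitive ideal of $\psmash{A}{H}$. The annihilator identity then yields $I = \ann_A(M) = \ann_{\psmash{A}{H}}(M) \cap A = \mc{P} \cap A$, as desired.

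Conversely, let $\mc{P}$ be a right primitive ideal of $\psmash{A}{H}$, say $\mc{P} = \ann_{\psmash{A}{H}}(M)$ with $M$ an irreducible right $\psmash{A}{H}$-module. By Proposition \ref{AHmod1}, $M$ becomes a right partial $\modpar{A}{H}$-module, and by Proposition \ref{AHmodsimples} it is irreducible as such. Using the annihilator identity once more, $\ann_A(M) = \ann_{\psmash{A}{H}}(M) \cap A = \mc{P} \cap A$, which is an $H$-stable ideal of $A$ by Proposition \ref{idHest1} (or Proposition \ref{anulHideal}). Therefore $\mc{P} \cap A$ is the annihilator of an irreducible partial $\modpar{A}{H}$-module, hence a right $H$-primitive ideal of $A$; note that $M$ is automatically faithful as a module over $A/(\mc{P}\cap A)$, since $\ann_{A/(\mc{P}\cap A)}(M) = \ann_A(M)/(\mc{P}\cap A) = 0$. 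The left-handed case is entirely analogous, using the left versions of Propositions \ref{AHmod1} and \ref{AHmodsimples}.

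I do not expect a genuine obstacle here: all the substantive content — that the relevant module categories coincide and that irreducibility is preserved in both directions — has already been isolated in Propositions \ref{AHmod1} and \ref{AHmodsimples}. The only point requiring a little care is to invoke the correct (right versus left) form of the correspondence and to notice that the equality $\ann_A(M) = \ann_{\psmash{A}{H}}(M)\cap A$ is exactly what translates ``$H$-primitive ideal of $A$'' into ``restriction to $A$ of a primitive ideal of $\psmash{A}{H}$''.
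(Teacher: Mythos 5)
Your proof is correct and follows exactly the route the paper intends: the paper states this proposition as an immediate consequence of Propositions \ref{AHmod1} and \ref{AHmodsimples} together with the identity $\ann_A(M)=\ann_{\psmash{A}{H}}(M)\cap A$, which is precisely the argument you spell out. The extra care you take with $H$-stability of $\mc{P}\cap A$ and faithfulness over $A/(\mc{P}\cap A)$ is appropriate and consistent with the paper's definitions.
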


\vspace{0.1cm}

\begin{prop} \label{HJradical1}
Let $\{ I_{\alpha} \}$ be the family of the right $H$-primitive ideals of $A$ and let $\{ K_{\beta} \}$ be the family of the left $H$-primitive ideals of $A$. Then
$$\bigcap I_{\alpha} = J \! \paren{\psmash{A}{H}} \cap A = \bigcap K_{\beta}.$$
\end{prop}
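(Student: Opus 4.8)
The plan is to reduce the statement to the classical fact that the Jacobson radical of the \emph{unital} algebra $R := \psmash{A}{H}$ is simultaneously the intersection of all right primitive ideals of $R$ and the intersection of all left primitive ideals of $R$, and then to transport this description to $A$ via the dictionary between irreducible partial $\modpar{A}{H}$-modules and irreducible $R$-modules.

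First I would collect the tools already in hand. By Proposition \ref{AHmod1}, right partial $\modpar{A}{H}$-modules are exactly right $R$-modules, with $\ann_A(M) = \ann_R(M) \cap A$; by Proposition \ref{AHmodsimples} the irreducible objects on the two sides coincide; and by Proposition \ref{idHest1} the intersection of any ideal of $R$ with $A$ is automatically $H$-stable. Combining these (this is precisely the content of Proposition \ref{HprimitiveDualHprimitiv}) yields an equality of families of ideals of $A$:
$$\{\, I_\alpha \,\} \;=\; \{\, \mc{P} \cap A \;:\; \mc{P} \text{ a right primitive ideal of } R \,\}.$$
Indeed, every right $H$-primitive $I_\alpha$ equals $\mc{P}\cap A$ for some right primitive ideal $\mc{P}$ of $R$, and conversely for every right primitive ideal $\mc{P}$ of $R$ the ideal $\mc{P}\cap A$ is $H$-stable and right $H$-primitive.

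Then I would simply intersect. Since $J(R)$ is the intersection of all right primitive ideals of $R$ and the operation $-\cap A$ commutes with arbitrary intersections,
$$\bigcap_\alpha I_\alpha \;=\; \bigcap_{\mc{P} \text{ right prim.}} (\mc{P}\cap A) \;=\; \Big( \bigcap_{\mc{P} \text{ right prim.}} \mc{P} \Big) \cap A \;=\; J(R)\cap A.$$
Running the identical argument with the left-hand analogues of Propositions \ref{AHmod1}, \ref{AHmodsimples} and \ref{HprimitiveDualHprimitiv}, and using that $J(R)$ is also the intersection of all left primitive ideals of $R$, gives $\bigcap_\beta K_\beta = J(R)\cap A$, so the three quantities coincide.

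I do not expect a genuine obstacle here: the proposition is essentially a translation statement. The only points deserving care are (i) invoking the left--right symmetry of the Jacobson radical of the unital ring $R$, so that the computations from the right and from the left land on the \emph{same} set $J(R)\cap A$; (ii) the routine interchange of an arbitrary intersection with $-\cap A$; and (iii) checking that the assignment $\mc{P}\mapsto \mc{P}\cap A$ both lands among the $I_\alpha$ and exhausts them, which is exactly the two directions of Proposition \ref{HprimitiveDualHprimitiv}.
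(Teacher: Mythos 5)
Your argument is correct and follows the same route as the paper: invoke Proposition \ref{HprimitiveDualHprimitiv} to identify the family $\{I_\alpha\}$ with $\{\mc{P}\cap A\}$ as $\mc{P}$ ranges over the right primitive ideals of $\psmash{A}{H}$, then intersect and use the left--right symmetry of $J(\psmash{A}{H})$ for the second equality. Your point (iii), that the correspondence must both land among and exhaust the $I_\alpha$, is exactly the "moreover" clause the paper relies on, so nothing is missing.
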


\begin{proof}
By Proposition \ref{HprimitiveDualHprimitiv}, for each $\alpha$, $I_{\alpha} = \mc{P}_{\alpha} \cap A$ for some right primitive ideal $\mc{P}_{\alpha}$ of $\psmash{A}{H}$ and, moreover, $\{ \mc{P}_{\alpha} \}$ is the family of the right primitive ideals of $\psmash{A}{H}$. Thus
$$\bigcap I_{\alpha} = \bigcap (\mc{P}_{\alpha} \cap A) = \left( \bigcap \mc{P}_{\alpha} \right) \cap A = J \! \paren{\psmash{A}{H}} \cap A.$$
The proof of the second equality is similar.
\end{proof}
\esp

Thus, the following definitions are completely natural.

\esp
\begin{defi} \label{HradJac}
The $H$-Jacobson radical $J_H(A)$ of a partial $H$-module algebra $A$ is defined as the intersection of all the right (or left) $H$-primitive ideals of $A$.
\end{defi}

\vspace{0.1cm}

\begin{defi} \label{defHsemiprimitiv}
A partial $H$-module algebra $A$ is called $H$-semiprimitive if $J_H(A) = 0$.
\end{defi}
\esp


The next result in which we are interested says that if $\mpzc{P}$ is a right primitive ideal of $A$ then $(\mpzc{P}:H)$ is a right $H$-primitive ideal (Corollary \ref{IdprimitIdHprimit}). For this, we need to construct a right partial $\modpar{A}{H}$-module $W$ from a given right $A$-module $V$ such that $V$ can be seen as an $A$-submodule of $W$ which generates $W$ as a partial $\modpar{A}{H}$-module. We will describe this construction below.

Let $V$ be a right $A$-module. Consider the subspace $W$ of $V \ts_{\kk} H$ which is generated by the elements of the form
$$\sum v(k_1 \cdot x) \ts k_2, \qquad v \in V, \, x \in A, \, k \in H.$$
Then we define the right action of $A$ on $W$ as follows: for $v \in V$, $x,a \in A$ and $k \in H$,
\begin{eqnarray*}
\paren{\sum v(k_1 \cdot x) \ts k_2}a & := & \sum v(k_1 \cdot x)(k_2 \cdot a) \ts k_3 \\
& = & \sum v(k_1 \cdot (xa)) \ts k_2.
\end{eqnarray*}
It is clear that this action define an $A$-module structure on $W$. Moreover, we have $V \cong V \ts_{\kk} 1_H \subseteq W$ as $A$-module because, for any $v \in V$ and $a \in A$, we have
$$(v \ts 1_H)a = (v(1_H \cdot 1_A) \ts 1_H)a = v(1_H \cdot a) \ts 1_H = va \ts 1_H.$$

Now we define the right action of $H$ on $W$ in the following way: for $v \in V$, $x \in A$ and $k, h \in H$,
\begin{eqnarray*}
\paren{\sum v(k_1 \cdot x) \ts k_2} \acd h & := & \sum v(k_1 \cdot x)((k_2 h_1) \cdot 1_A) \ts k_3 h_2 \\
& = & \sum v(k_1 \cdot (x(h_1 \cdot 1_A)) \ts k_2 h_2.
\end{eqnarray*}
It is clear that $w \acd 1_H = w$ for any $w \in W$. Moreover, if $w = \sum v(k_1 \cdot x) \ts k_2$, with $v \in V$, $x \in A$ and $k \in H$, then, for any $a \in A$ and $h,g \in H$, we have
\begin{eqnarray*}
((w \acd h)a) \acd g & = & \paren{\paren{\sum v(k_1 \cdot x) \big( (k_2 h_1) \cdot 1_A \big) \ts k_3 h_2}a} \acd g \\
& = & \paren{\sum v(k_1 \cdot x) \big( (k_2 h_1) \cdot a \big) \ts k_3 h_2} \acd g \\
& = & \sum v(k_1 \cdot x) \big( (k_2 h_1) \cdot a \big) \big( (k_3 h_2 g_1) \cdot 1_A \big) \ts k_4 h_3 g_2 \\
& = & \sum v \Big( k_1 \cdot \big( x(h_1 \cdot a) \big) \Big) \big( (k_2 h_2 g_1) \cdot 1_A \big) \ts k_3 h_3 g_2 \\
& = & \paren{\sum v \Big( k_1 \cdot \big( x(h_1 \cdot a) \big) \Big) \ts k_2} \acd (h_2 g) \\
& = & \paren{\paren{\sum v(k_1 \cdot x) \ts k_2}(h_1 \cdot a)} \acd (h_2 g) \\
& = & (w (h_1 \cdot a)) \acd (h_2 g).
\end{eqnarray*}

Hence $W$ is a right partial $\modpar{A}{H}$-module which contain $V$ as $A$-submodule. Since $W$ is generated by elements of the form
$\sum v(k_1 \cdot x) \ts k_2$, we can see that
\begin{eqnarray*}
\sum v(k_1 \cdot x) \ts k_2 & = & \sum v(k_1 \cdot x)(k_2 \cdot 1_A) \ts k_3 \\
& = & \sum (v(k_1 \cdot x) \ts 1_H) \acd k_2 \\
& = & \sum \big( (v \ts 1_H)(k_1 \cdot x) \big) \acd k_2 \; \in \; (VA) \acd H,
\end{eqnarray*}
$v \in V$, $x \in A$ and $k \in H$, and it follows that $V$ generates $W$ as partial $\modpar{A}{H}$-module.

We are particularly interested in the case when $V$ is an irreducible right $A$-module.

\esp
\begin{prop} \label{AmodsimpAHmodsimp}
Let $V$ be an irreducible right $A$-module. Then there exists an irreducible right partial $\modpar{A}{H}$-module $M$, such that $V$ is (isomorphic to) an $A$-submodule of $M$. Moreover, if $H$ and $V$ are finite-dimensional then $M$ is also finite-dimensional and
$$\dim_{\kk}(M) \leq \dim_{\kk}(H) \dim_{\kk}(V).$$
\end{prop}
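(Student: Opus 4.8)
The plan is to start from the right partial $\modpar{A}{H}$-module $W$ constructed just above, which contains $V$ as an $A$-submodule and is generated by $V$ as a partial $\modpar{A}{H}$-module, and to extract from it an irreducible quotient in a way that keeps a copy of $V$ embedded. First I would note that $W \neq 0$ and consider the family of all partial $\modpar{A}{H}$-submodules $N$ of $W$ with $N \cap V = 0$; this family is nonempty (it contains $0$) and closed under unions of chains, so by Zorn's Lemma it has a maximal element $N_0$. Set $M := W/N_0$. Since $N_0 \cap V = 0$, the composite $V \hookrightarrow W \twoheadrightarrow M$ is injective, so $V$ embeds in $M$ as an $A$-submodule; call its image $\bar V$.

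Next I would prove $M$ is irreducible as a partial $\modpar{A}{H}$-module. Let $\bar P$ be a nonzero partial $\modpar{A}{H}$-submodule of $M$ and let $P$ be its preimage in $W$, so $P \supsetneq N_0$. By maximality of $N_0$ we have $P \cap V \neq 0$; but $P \cap V$ is an $A$-submodule of $V$ (the $H$-action is irrelevant here: $V$ is just an $A$-module inside $W$), and $V$ is irreducible as an $A$-module, so $P \cap V = V$, i.e.\ $V \subseteq P$. Since $V$ generates $W$ as a partial $\modpar{A}{H}$-module — this is exactly the computation displayed above showing every generator $\sum v(k_1 \cdot x)\ts k_2$ lies in $(VA)\acd H$ — and $P$ is a partial $\modpar{A}{H}$-submodule containing $V$, we get $P = W$, hence $\bar P = M$. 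Therefore $M$ has no proper nonzero partial $\modpar{A}{H}$-submodules, and $M \neq 0$ since $\bar V \cong V \neq 0$; thus $M$ is irreducible. (Equivalently, by Proposition \ref{AHmodsimples}, $M$ is an irreducible right $\psmash{A}{H}$-module.)

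For the dimension bound, observe that $W$ is by construction a subspace of $V \ts_{\kk} H$, so when $H$ and $V$ are finite-dimensional we have $\dim_{\kk} W \leq \dim_{\kk}(V\ts_\kk H) = \dim_\kk(H)\dim_\kk(V)$, and since $M$ is a quotient of $W$, $\dim_\kk(M) \leq \dim_\kk(W) \leq \dim_\kk(H)\dim_\kk(V)$, as claimed. The only point that requires a little care — and the one I would treat as the main technical obstacle — is checking that $W$, as defined, is genuinely closed under the right $A$-action and the right $H$-action (so that the Zorn argument takes place inside a legitimate partial $\modpar{A}{H}$-module) and that $V\cong V\ts_\kk 1_H$ really sits inside $W$; but both of these are precisely the identities verified in the paragraphs immediately preceding the statement, so one may simply invoke that construction. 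Everything else is the standard "enlarge to an irreducible quotient via Zorn, keeping a distinguished submodule disjoint from the kernel" argument.
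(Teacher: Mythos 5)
Your proposal is correct and follows essentially the same route as the paper: take the partial $\modpar{A}{H}$-module $W$ built from $V$ in the preceding construction, use Zorn's Lemma to find a partial $\modpar{A}{H}$-submodule maximal among those meeting $V$ trivially, and pass to the quotient; irreducibility then comes from the maximality together with the fact that $V$ is an irreducible $A$-module generating $W$. The only cosmetic difference is that the paper phrases the last step as ``every nonzero element of $V$ generates $W$'' and bounds $\dim_\kk(M)$ via $M = V \acd H$ rather than via $W \subseteq V \ts_\kk H$, but these are the same argument.
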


\begin{proof}
Let $W$ be a right partial $\modpar{A}{H}$-module such that $V$ is an $A$-submodule of $W$ and $V$ generates $W$ as partial $\modpar{A}{H}$-module. Then $W = (VA) \acd H = V \acd H$. Since $V$ is an irreducible $A$-module, for any $0 \neq u \in V$ we have $V = uA$, and so $W = (uA) \acd H$. Thus every nonzero element of $V$ generates $W$ as partial $\modpar{A}{H}$-module.

Now, by Zorn's Lemma, there exists a partial $\modpar{A}{H}$-submodule $U$ of $W$ maximal with respect to the property that $U \cap V = 0$. Then the quotient $M = W/U$ has a natural right partial $\modpar{A}{H}$-module structure. Moreover, from $U \cap V = 0$, we have a natural monomorphism of right $A$-modules:
\begin{eqnarray*}
\ffuni{V}{M = W/U}{v}{\br{v} = v + U}.
\end{eqnarray*}

We claim that $M$ is irreducible (as partial $\modpar{A}{H}$-module). In fact, if $N$ is a nonzero partial $\modpar{A}{H}$-submodule of $M$, then $N = T/U$ for some partial $\modpar{A}{H}$-submodule $T$ of $W$ such that $U \varsubsetneq T$. By maximality of $U$, we have $T \cap V \neq 0$. Fix a nonzero element $u \in T \cap V$. As observed above, $W = (uA) \acd H$, so $M = (\br{u}A) \acd H \subseteq N$ and $N = M$. Hence $M$ is an irreducible right partial $\modpar{A}{H}$-module and $V$ is isomorphic to an $A$-submodule of $M$.

Clearly $M = (VA) \acd H = V \acd H$ and the last statement follows.
\end{proof}

\esp

\begin{cor} \label{IdprimitIdHprimit}
If $\mpzc{P}$ is a right primitive ideal of $A$, then $(\mpzc{P}:H)$ is right $H$-primitive.
\end{cor}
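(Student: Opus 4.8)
The plan is to exhibit $(\mpzc{P}:H)$ as the annihilator of an irreducible right partial $\modpar{A}{H}$-module; by the remark following the definition of $H$-primitive ideal, this is precisely what is needed. Write $\mpzc{P} = \ann_A(V)$ for some irreducible right $A$-module $V$. Applying Proposition \ref{AmodsimpAHmodsimp} I obtain an irreducible right partial $\modpar{A}{H}$-module $M$ in which $V$ embeds as an $A$-submodule and which is generated by $V$ as a partial $\modpar{A}{H}$-module, so that $M = (VA)\acd H = V \acd H$. By Proposition \ref{anulHideal} the ideal $\ann_A(M)$ is $H$-stable, and being the annihilator of an irreducible right partial $\modpar{A}{H}$-module it is right $H$-primitive; hence everything reduces to the identity $\ann_A(M) = (\mpzc{P}:H)$.

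For the inclusion $\ann_A(M) \subseteq (\mpzc{P}:H)$: from $V \subseteq M$ as $A$-modules one gets $\ann_A(M) \subseteq \ann_A(V) = \mpzc{P}$, and since $\ann_A(M)$ is $H$-stable, Proposition \ref{maioridealHest} — which identifies $(\mpzc{P}:H)$ as the largest $H$-stable ideal contained in $\mpzc{P}$ — yields $\ann_A(M) \subseteq (\mpzc{P}:H)$.

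For the reverse inclusion I would take $x \in (\mpzc{P}:H)$, so that $h \cdot x \in \mpzc{P}$ for every $h \in H$. Since $M = V \acd H$, it suffices to check that $x$ annihilates a typical generator $v \acd h$ with $v \in V$ and $h \in H$. Using \itm{(PM3)} one computes $(v \acd h)x = \sum (v(h_1 \cdot x)) \acd h_2$; expanding $\Delta(h)$ as a genuine finite sum $\sum_i g_i \otimes k_i$, each $g_i \cdot x$ lies in $\mpzc{P} = \ann_A(V)$, so $v(g_i \cdot x) = 0$ and the whole expression vanishes. Thus $x \in \ann_A(M)$, and combining with the previous paragraph we conclude $\ann_A(M) = (\mpzc{P}:H)$, so $(\mpzc{P}:H)$ is right $H$-primitive.

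I do not expect a genuine obstacle here: the delicate module-theoretic step (producing $M$) has already been carried out in Proposition \ref{AmodsimpAHmodsimp}, and the only point requiring a little care is to treat the Sweedler expression $\sum (v(h_1 \cdot x)) \acd h_2$ as an honest finite sum over elements of $H$ before invoking membership in $(\mpzc{P}:H)$, so that the vanishing is a real computation rather than a merely symbolic one.
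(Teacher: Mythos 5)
Your proposal is correct and follows essentially the same route as the paper: both realize $(\mpzc{P}:H)$ as $\ann(M)$ for the irreducible partial $\modpar{A}{H}$-module $M$ produced by Proposition \ref{AmodsimpAHmodsimp}, proving $(\mpzc{P}:H)\subseteq\ann(M)$ by the same \itm{(PM3)} computation $(v\acd h)x=\sum(v(h_1\cdot x))\acd h_2=0$ and the reverse inclusion from the $H$-stability of $\ann(M)$ together with $\ann(M)\subseteq\mpzc{P}$. The only differences are cosmetic (order of the two inclusions and your explicit expansion of the Sweedler sum).
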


\begin{proof}
Let $V$ be an irreducible right $A$-module such that $\mpzc{P} = \ann(V)$ and let $M$ be an irreducible right partial $\modpar{A}{H}$-module such that $V$ is an $A$-submodule of $M$, as in the Proposition \ref{AmodsimpAHmodsimp}. Denote $I := (\mpzc{P}:H)$. For any $v \in V$, $x \in I$ and $h \in H$, we have
$$(v \acd h)x = \sum (v(h_1 \cdot x)) \acd h_2 \; \in \; (V(H \cdot I)) \acd H \subseteq (V \mpzc{P}) \acd H = 0,$$
therefore $MI = (V \acd H)I = 0$ and $(\mpzc{P}:H) = I \subseteq \ann(M)$. On the other hand, $\ann(M)$ is an $H$-stable ideal of $A$ (Proposition \ref{anulHideal}) which is contained in $\mpzc{P}$ (since $V \subseteq M$), so $\ann(M) \subseteq (\mpzc{P}:H)$. Hence $(\mpzc{P}:H) = \ann(M)$ is a right $H$-primitive ideal of $A$.
\end{proof}
\esp


From now on we restrict ourselves to the case when $H$ is finite-dimensional. In this situation, we can obtain more details about the $H$-Jacobson radical of $A$. First, however, as made before for right partial $\modpar{A}{H}$-module, we will present a construction of a left partial $\modpar{A}{H}$-module $W$ from a given left $A$-module $V$, such that $V$ can be seen as an $A$-submodule of $W$ which generates $W$ as a partial $\modpar{A}{H}$-module. This construction is an adaptation for partial actions of one which appeared in \cite{LM} and we will need a nonzero integral element in $\dual{H}$ so that
$H$ need to be finite-dimensional.


Suppose that $H$ is a finite-dimensional Hopf algebra and let $A$ be a partial $H$-module algebra. In this case,  analogously to the global case, the (left) partial $H$-module algebra structure of $A$ induces a structure of (right) partial $\dual{H}$-comodule algebra $\rho : A \rightarrow A \otimes H^*$ such that
\begin{eqnarray} \label{actcoact}
\rho(a) = \sum a_0 \otimes a_1 \qquad \Longleftrightarrow \qquad h \cdot a = \sum a_0 a_1(h) \, , \,\,\, \forall \, h \in H.
\end{eqnarray}
(see \cite{CJ} or \cite{AB3}). Remember also that $\dual{H}$ is an $H$-module algebra (global action) via $h \rightharpoonup \varphi = \sum \varphi_1 \varphi_2(h)$, for $h \in H$ and $\varphi \in \dual{H}$. Then, for $a \in A$ and $h \in H$, we have
\begin{eqnarray*}
\rho(h \cdot a) & = & \sum \rho(a_0) a_1(h) \qquad \qquad \text{(by (1))} \\
& = & \sum a_{00} \otimes a_{01} a_1(h) \\
& = & \sum 1_0 a_0 \otimes 1_1 a_{11} a_{12}(h) \qquad \qquad \text{(by \itm{(PC3)}, Definition \ref{defHcomodAlgPar})} \\
& = & \sum 1_0 a_0 \otimes 1_1 (h \rightharpoonup a_1) \\
& = & \rho(1_A) (id_A \ts (h \rightharpoonup)) \rho(a)
\end{eqnarray*}

Now let $V$ be a left $A$-module and consider the subspace $W = \rho(A) (V \ts \dual{H})$ of $V \ts \dual{H}$ generated by elements of the form
$$\rho(x)(v \ts \varphi) = \sum x_0 v \ts x_1 \varphi \, , \qquad x \in A, \, v \in V, \, \varphi \in \dual{H}.$$
We define the action of $A$ and $H$ on $W$ as follows: for $w \in W$, $a \in A$ and $h \in H$,
$$a \bullet w = \rho(a) w \qquad \text{and} \qquad h \ace w = \rho(1_A) (id_V \ts (h \rightharpoonup))(w),$$
that is, if $w = \sum x_0 v \ts x_1 \varphi$ with $x \in A$, $v \in V$ and $\varphi \in \dual{H}$, then
$$a \bullet w := \sum a_0 x_0 v \ts a_1 x_1 \varphi = \rho(ax)(v \ts \varphi) \qquad \qquad \text{(by \itm{(PC2)}, Definition \ref{defHcomodAlgPar})}$$
and
$$h \ace w = \rho(1_A) \left( \sum x_0 v \ts h \rightharpoonup (x_1 \varphi) \right) = \sum 1_0 x_0 v \ts 1_1 (h \rightharpoonup (x_1 \varphi)).$$

Evidently $\bullet$ defines a left $A$-module structure on $M$. Also it is clear that \itm{(PM1)} of the Definition \ref{defAHmod} is satisfied. For \itm{(PM2)}, let $w = \sum x_0 v \ts x_1 \varphi \in W$, with $x \in A$, $v \in V$, $\varphi \in \dual{H}$, and let $a \in A$, $h,g \in H$. Then we have
\begin{eqnarray*}
h \ace(a \bullet (g \ace w)) & = & h \ace \paren{a \bullet \paren{\sum 1_0 x_0 v \ts 1_1 (g \rightharpoonup (x_1 \varphi))}} \\
& = & h \ace \paren{\sum a_0 x_0 v \ts a_1 (g \rightharpoonup (x_1 \varphi))} \\
& = & \sum 1_0 a_0 x_0 v \ts 1_1 h \rightharpoonup (a_1 (g \rightharpoonup (x_1 \varphi))) \\
& = & \sum 1_0 a_0 x_0 v \otimes 1_1 (h_1 \rightharpoonup a_1) ((h_2 g) \rightharpoonup (x_1 \varphi)) \\
& = & \sum \rho(h_1 \cdot a) [x_0 v \otimes (h_2 g) \rightharpoonup (x_1 \varphi)] \\
& = & \sum (h_1 \cdot a) \bullet ((h_2 g) \ace w).
\end{eqnarray*}
Hence $W$ is a left partial $\modpar{A}{H}$-module algebra.

Now, let $\lambda$ be a nonzero left integral in $\dual{H}$. Then we have an isomorphism of left $A$-modules $V \cong V \ts \lambda$. In fact, by \itm{(PC1)} of Definition \ref{defHcomodAlgPar}, for every $v \in V$ and $a \in A$,
$$v \ts \lambda = 1 v \ts \lambda = 1_0 v \ts \ep{\dual{H}}(1_1) \lambda = \sum 1_0 v \ts 1_1 \lambda = \rho(1_A)(v \ts \lambda) \, \in \, W$$
and, by the same reason,
$$a \bullet (v \ts \lambda) = av \ts \lambda.$$
Moreover, by a result of Larson and Sweedler \cite{LS}, the map $H \to \dual{H}$, given by $h \mapsto (h \rightharpoonup \lambda)$, is an isomorphism. Consequently, for every $x \in A$, $v \in V$ and $\varphi \in \dual{H}$, there exists $h \in H$ such that $h \rightharpoonup \lambda = \varphi$, and then
$$\sum x_0 v \ts x_1 \varphi = \rho(x)(v \ts (h \rightharpoonup \lambda)) = x \bullet (h \ace (v \ts \lambda)) \, \in \, A (H \ace V),$$
via $V \cong V \ts \lambda$, so $V$ generates $W$ as left partial $\modpar{A}{H}$-module.

\esp
\begin{remark} \label{leftAHmodext}
Observe that if $V$ and $W$ are as above and $V$ is irreducible, then $W = A(H \ace (v \ts \lambda))$ for any nonzero $v \in V$ and we can obtain the analogous of the \emph{Proposition \ref{AmodsimpAHmodsimp}} for irreducible left $A$-modules. The proof is essentially the same (with the appropriate adaptations), except for the inequality $\dim_{\kk}(M) \leq \dim_{\kk}(H) \dim_{\kk}(V)$. This latter follows since $H$ is finite-dimensional and $M = A(H \ace (v \ts \lambda))$, so there exists an epimorphism of $A$-modules $V^n \to M$. With regard to the \emph{Corollary \ref{IdprimitIdHprimit}} we can only deduce that $\ann(M) \subseteq (\mpzc{P}:H)$. However, this inclusion is enough for our purposes (see \emph{Theorem \ref{Semiprimitiv1}}).
\end{remark}
\esp


In the sequel we give an important characterization for the $H$-Jacobson radical when $H$ is finite-dimensional.

\esp
\begin{prop} \label{HJradical2}
Suppose that $H$ is finite-dimensional. Then $J_H(A) = (J(A):H)$. In particular, $A$ is $H$-semiprimitive if and only if $J(A)$ does not contain nonzero $H$-stable ideals.
\end{prop}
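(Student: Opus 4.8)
The plan is to combine Proposition~\ref{HJradical1}, which already yields $J_H(A)=J(R)\cap A$ for $R:=\psmash{A}{H}$, with a coinduction argument carrying information from irreducible $A$-modules to irreducible $R$-modules. Put $I:=(J(A):H)$, the largest $H$-stable ideal of $A$ contained in $J(A)$ by Proposition~\ref{maioridealHest}; by Theorem~\ref{idProdSmashHcom} the subspace $\psmash{I}{H}=\Phi(I)$ is a two-sided ideal of $R$, and $I=\psmash{I}{1_H}\subseteq\psmash{I}{H}$. One inclusion is immediate: $J_H(A)=J(R)\cap A$ is an intersection of $H$-primitive, hence $H$-stable, ideals of $A$, so it is $H$-stable, and it is contained in $J(A)$ by Remark~\ref{JacA}; therefore $J_H(A)\subseteq(J(A):H)=I$ by Proposition~\ref{maioridealHest}.

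For the reverse inclusion, since $I\subseteq A$ it suffices to show that $I$ annihilates every irreducible left $R$-module $M$. First, $R$ is finitely generated as a left $A$-module: fixing a basis $h_1,\dots,h_n$ of $H$ and using that $\psmash{a}{h}$ is linear in $h$ together with $\psmash{a}{h_i}=(\psmash{a}{1_H})(\psmash{1_A}{h_i})$, one gets $R=\sum_{i=1}^{n}A\,\psmash{1_A}{h_i}$. Hence, for any $0\neq m_0\in M$, the module $M=Rm_0$ is finitely generated over $A$, so Nakayama's Lemma gives $J(A)M\neq M$ and $S:=M/J(A)M$ is a nonzero left $A$-module with $IS=0$ (as $I\subseteq J(A)$). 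Write $\pi\colon M\to S$ for the canonical surjection of left $A$-modules.

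Now let $\widehat{S}:=\Hom_A({}_AR,\,{}_AS)$ be the coinduced left $R$-module: the space of left $A$-linear maps $R\to S$, with $R$ acting by $(u\cdot f)(v):=f(vu)$. The assignment $\Theta\colon M\to\widehat{S}$ defined by $\Theta(m)(v):=\pi(vm)$ is left $R$-linear, and $\ker\Theta=\{\,m\in M:Rm\subseteq J(A)M\,\}$ is a \emph{proper} $R$-submodule of $M$ (it cannot equal $M$, for then $M=R\,1_R\,M\subseteq J(A)M$, contradicting Nakayama), so irreducibility of $M$ forces $\ker\Theta=0$ and $M$ embeds as a left $R$-module into $\widehat{S}$. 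Finally $I$ annihilates $\widehat{S}$: given $x\in I$, $f\in\widehat{S}$ and $v\in R$, we have $v\,(\psmash{x}{1_H})\in R\,\psmash{I}{H}\subseteq\psmash{I}{H}$, hence $v\,(\psmash{x}{1_H})=\sum_k\psmash{x_k}{g_k}=\sum_k x_k\,(\psmash{1_A}{g_k})$ with all $x_k\in I$, and therefore, using the left $A$-linearity of $f$ and $IS=0$, $(x\cdot f)(v)=f\bigl(v\,(\psmash{x}{1_H})\bigr)=\sum_k x_k\,f(\psmash{1_A}{g_k})=0$. Thus $IM\subseteq I\widehat{S}=0$, i.e.\ $I\subseteq\ann_R(M)$ for every irreducible left $R$-module $M$, whence $I\subseteq J(R)\cap A=J_H(A)$ and $J_H(A)=(J(A):H)$. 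The final assertion then follows from Proposition~\ref{maioridealHest} and Definition~\ref{defHsemiprimitiv}: $A$ is $H$-semiprimitive iff $J_H(A)=(J(A):H)=0$, i.e.\ iff $J(A)$ contains no nonzero $H$-stable ideal.

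The crux, and the step I expect to be the real obstacle, is precisely this transfer from $A$ to $R$. The naive candidate $\psmash{J(A)}{H}$ is of no use, since $J(A)$ need not be $H$-stable and $\psmash{J(A)}{H}$ is then not an ideal of $R$; one is forced to pass to the $H$-stable core $I$, and the coinduction functor --- combined with the identity $R\,\psmash{I}{H}\subseteq\psmash{I}{H}$ --- is exactly the device that promotes ``$I$ kills every simple left $A$-module'' to ``$\psmash{I}{H}$ kills every simple left $R$-module''. The technical points one must check carefully are that $\widehat{S}$ really is a left $R$-module under $(u\cdot f)(v)=f(vu)$, and that $\Theta$ is a well-defined $R$-linear monomorphism.
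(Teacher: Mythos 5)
Your proof is correct, but the crucial step --- the reverse inclusion $(J(A):H)\subseteq J_H(A)$ --- is handled by a genuinely different device than the paper's. Both arguments begin identically: $J_H(A)=J(\psmash{A}{H})\cap A$ is $H$-stable and sits inside $J(A)$, so it lies in $(J(A):H)$; and both exploit that an irreducible left $\psmash{A}{H}$-module $M$ is finitely generated over $A$ so that Nakayama gives $J(A)M\neq M$. But where you pass to the quotient $S=M/J(A)M$ and embed $M$ into the coinduced module $\Hom_A(\psmash{A}{H},S)$ to pull the annihilation of $I:=(J(A):H)$ back to $M$, the paper argues directly on $M$ viewed as a left partial $\modpar{A}{H}$-module: since $I$ is $H$-stable, $IM$ is a partial $\modpar{A}{H}$-submodule (equivalently, $IM=(I\,\psmash{A}{H})M$ is an $\psmash{A}{H}$-submodule, because $\psmash{I}{H}=I\,\psmash{A}{H}$ is a two-sided ideal), so irreducibility together with $IM\subseteq J(A)M\varsubsetneq M$ forces $IM=0$ at once. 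Your coinduction argument is sound --- the module structure on $\Hom_A(R,S)$, the $R$-linearity and injectivity of $\Theta$, and the computation $(x\cdot f)(v)\in IS=0$ all check out --- and it has the virtue of making explicit how information travels from simple $A$-modules to simple $\psmash{A}{H}$-modules; but it is a detour, since the observation that $IM$ is already an $R$-submodule of the irreducible module $M$ renders the auxiliary module $\widehat{S}$ unnecessary. The paper's finite generation of $M$ over $A$ is also obtained slightly differently (from $M=A(H\ace m)$ with $H$ finite-dimensional rather than from $\psmash{A}{H}=\sum_i A\,(\psmash{1_A}{h_i})$), but these are equivalent.
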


\begin{proof}
By Proposition \ref{HJradical1} and Remark \ref{JacA}, we have that $J_H(A) = J(\psmash{A}{H}) \cap A \subseteq J(A)$. Since $J_H(A)$ is $H$-stable then the inclusion
$J_H(A) \subseteq (J(A):H)$ follows. Also, we observe that it is possible to  deduce this same inclusion from Corollary \ref{IdprimitIdHprimit}.

To prove the reverse inclusion, we will show that $\mpzc{J} := (J(A):H) \subseteq \ann(M)$, for any irreducible left partial $\modpar{A}{H}$-module $M$. In fact, if $0 \neq m \in M$, then $A(H \ace m)$ is a nonzero partial $\modpar{A}{H}$-submodule of $M$, so $A(H \ace m) = M$. Since $H$ is finite-dimensional, this implies that $M$ is finitely generated as $A$-module. From $\mpzc{J} \subseteq J(A)$, by Nakayama's Lemma, $\mpzc{J} M \varsubsetneq M$. Moreover, $\mpzc{J}$ is an $H$-stable ideal, so $\mpzc{J} M$ is a partial $\modpar{A}{H}$-submodule of $M$. Hence $\mpzc{J} M = 0$ and $(J(A):H) = \mpzc{J} \subseteq \ann(M)$.
\end{proof}
\esp

We observe that if $H$ is finite-dimensional and the action of $H$ on $A$ is global, then we can recover \cite[Corollary 2.6 (2)]{LMS} by using
the Proposition \ref{HJradical2} combined with the Definitions \ref{HradJac} and \ref{defHsemiprimitiv}.

From Corollary \ref{IdprimitIdHprimit} (resp. Remark \ref{leftAHmodext}) and Proposition \ref{HJradical2}, when $H$ is finite-dimensional, it is sufficient to take the intersection of all $H$-stable ideals of $A$ which are of the form $(\mpzc{P}:H)$, where $\mpzc{P}$ is a right (resp. left) primitive ideal of $A$, to obtain $J_H(A)$ as it will be shown in the next result.

\esp
\begin{cor} \label{JHAidprimit}
Let $\{ \mpzc{P}_{\alpha} \}$ be the family of all right (resp. left) primitive ideals of $A$. If $H$ is finite-dimensional, then $J_H(A) = \bigcap (\mpzc{P}_{\alpha}:H)$.
\end{cor}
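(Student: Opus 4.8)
The plan is to deduce this at once from Proposition \ref{HJradical2}, which already identifies $J_H(A)$ with $(J(A):H)$ when $H$ is finite-dimensional, together with the classical fact that the Jacobson radical $J(A)$ is the intersection of all the right (resp. left) primitive ideals of $A$.

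First I would record the elementary observation that the operation $(\,-\,:H)$ commutes with arbitrary intersections of subspaces: for any family $\{ X_{\alpha} \}$ of subspaces of $A$ one has $\left( \bigcap_{\alpha} X_{\alpha} : H \right) = \bigcap_{\alpha} (X_{\alpha} : H)$, since an element $x$ lies in either side exactly when, for every $\alpha$, both $x \in X_{\alpha}$ and $h \cdot x \in X_{\alpha}$ for all $h \in H$. Applying this with $\{ \mpzc{P}_{\alpha} \}$ the family of all right primitive ideals of $A$, using $J(A) = \bigcap_{\alpha} \mpzc{P}_{\alpha}$ and Proposition \ref{HJradical2}, we get
\[
J_H(A) \,=\, (J(A):H) \,=\, \Big( \bigcap_{\alpha} \mpzc{P}_{\alpha} : H \Big) \,=\, \bigcap_{\alpha} (\mpzc{P}_{\alpha}:H),
\]
which is the asserted equality in the ``right'' case; the ``left'' case is verbatim the same, using that $J(A)$ is also the intersection of all left primitive ideals of $A$.

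A more structural variant, closer in spirit to the preceding results, would be a double inclusion. On one hand $J_H(A)$ is $H$-stable (it is an intersection of $H$-stable ideals) and is contained in $J(A) \subseteq \mpzc{P}_{\alpha}$ for every $\alpha$, so by Proposition \ref{maioridealHest} it lies in $(\mpzc{P}_{\alpha}:H)$, the largest $H$-stable ideal of $A$ inside $\mpzc{P}_{\alpha}$; hence $J_H(A) \subseteq \bigcap_{\alpha} (\mpzc{P}_{\alpha}:H)$. (In the right case one may alternatively invoke Corollary \ref{IdprimitIdHprimit}, so that each $(\mpzc{P}_{\alpha}:H)$ is itself a right $H$-primitive ideal, together with Definition \ref{HradJac}.) Conversely $\bigcap_{\alpha} (\mpzc{P}_{\alpha}:H) \subseteq \bigcap_{\alpha} \mpzc{P}_{\alpha} = J(A)$ is again an $H$-stable ideal, hence contained in $(J(A):H) = J_H(A)$ by Proposition \ref{maioridealHest} and Proposition \ref{HJradical2}. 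I do not expect a genuine obstacle here; the only points meriting care are to invoke the classical radical identity with the matching handedness (right vs. left primitive ideals) so that it lines up with the statement, and, for the left case, to recall — as in Remark \ref{leftAHmodext} — that the left-module construction only yields $\ann(M) \subseteq (\mpzc{P}:H)$, which is nonetheless sufficient, since the reverse containment needed for the double inclusion is furnished directly by $J_H(A) = (J(A):H)$.
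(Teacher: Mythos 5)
Your proposal is correct, and your primary argument is a genuinely cleaner route than the one in the paper. The paper proves the equality by double inclusion: for $J_H(A) \subseteq \bigcap (\mpzc{P}_{\alpha}:H)$ it invokes Corollary \ref{IdprimitIdHprimit} (resp.\ Remark \ref{leftAHmodext}), so that each $(\mpzc{P}_{\alpha}:H)$ contains an $H$-primitive ideal and hence contains $J_H(A)$; for the reverse inclusion it observes, exactly as you do in your ``structural variant,'' that $\bigcap (\mpzc{P}_{\alpha}:H) \subseteq \bigcap \mpzc{P}_{\alpha} = J(A)$ is $H$-stable and applies Proposition \ref{HJradical2}. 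Your main argument instead notes the purely set-theoretic identity $\left( \bigcap_{\alpha} X_{\alpha} : H \right) = \bigcap_{\alpha} (X_{\alpha}:H)$ and concludes in one line from $J_H(A) = (J(A):H)$ and $J(A) = \bigcap_{\alpha} \mpzc{P}_{\alpha}$; this is valid (the identity holds at the level of subsets, before any question of $H$-stability arises) and has the advantage of bypassing Corollary \ref{IdprimitIdHprimit} and the left/right asymmetry of the module constructions entirely, since Proposition \ref{HJradical2} and the classical description of $J(A)$ are both handedness-independent. What the paper's route buys in exchange is the extra structural information already isolated in Corollary \ref{IdprimitIdHprimit}, namely that in the right-handed case each $(\mpzc{P}_{\alpha}:H)$ is itself a right $H$-primitive ideal, so that $J_H(A)$ is realized as an intersection over a distinguished subfamily of $H$-primitive ideals; your computation establishes the stated equality without reproving that fact. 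Your second, double-inclusion variant coincides with the paper's proof up to the optional substitution of Proposition \ref{maioridealHest} for Corollary \ref{IdprimitIdHprimit} in the forward inclusion, which is equally legitimate since $J_H(A)$ is an $H$-stable ideal contained in $J(A) \subseteq \mpzc{P}_{\alpha}$.
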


\begin{proof}
By Corollary \ref{IdprimitIdHprimit} (resp. Remark \ref{leftAHmodext}), it is clear that $J_H(A) \subseteq \bigcap (\mpzc{P}_{\alpha}:H)$. On the other hand, $\bigcap (\mpzc{P}_{\alpha}:H) \subseteq \bigcap \mpzc{P}_{\alpha} = J(A)$. Since $\bigcap (\mpzc{P}_{\alpha}:H)$ is $H$-stable, by Proposition \ref{HJradical2}, $\bigcap (\mpzc{P}_{\alpha}:H) \subseteq (J(A):H) = J_H(A)$.
\end{proof}
\esp


We finish this section by establishing a relation between the $H$-Jacobson radical of $A$ and the $\dual{H}$-Jacobson radical of $\psmash{A}{H}$, when $H$ is finite-dimensional.

\esp
\begin{teo} \label{Hsemiprimitiv}
Let $H$ be a finite-dimensional Hopf algebra and let $A$ be a partial $H$-module algebra. Then
$$J_H(A) = J_{\dual{H}} \!\! \paren{\psmash{A}{H}} \cap A \quad \quad \text{and} \quad \quad J_{\dual{H}} \!\! \paren{\psmash{A}{H}} = \psmash{J_H(A)}{H}.$$
In particular, $A$ is $H$-semiprimitive if and only if $\psmash{A}{H}$ is $\dual{H}$-semiprimitive.
\end{teo}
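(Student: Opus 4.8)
The plan is to transport everything through the bijection $\Phi,\Psi$ of Corollary \ref{idHest2} and to invoke Proposition \ref{HJradical2} twice: once for the pair $(A,H)$, and once for the pair $\big(\psmash{A}{H},\dual{H}\big)$. Write $R:=\psmash{A}{H}$. The second application is legitimate because $\dual{H}$ is finite-dimensional whenever $H$ is, and $R$ is an $\dual{H}$-module algebra (global action) with $R^{\dual{H}}=A$. From Proposition \ref{HJradical1} together with Definition \ref{HradJac} we have $J_H(A)=J(R)\cap A$, and by Proposition \ref{HJradical2} this ideal equals $(J(A):H)$; in particular $J_H(A)$ is an $H$-stable ideal of $A$, so it lies in the domain of $\Phi$. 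Applying Proposition \ref{HJradical2} to $(R,\dual{H})$ yields $J_{\dual{H}}(R)=(J(R):\dual{H})$, which by Proposition \ref{maioridealHest} is the largest $\dual{H}$-stable ideal of $R$ contained in $J(R)$.

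First I would prove the inclusion $\psmash{J_H(A)}{H}\subseteq J_{\dual{H}}(R)$. Recall from the proof of Theorem \ref{idProdSmashHcom} that $\psmash{I}{H}=IR$ for every $H$-stable ideal $I$ of $A$; hence
$$\psmash{J_H(A)}{H}=J_H(A)\,R=(J(R)\cap A)\,R\subseteq J(R)\,R\subseteq J(R),$$
since $J(R)$ is a two-sided ideal of $R$. By Corollary \ref{idHest2} the ideal $\psmash{J_H(A)}{H}=\Phi(J_H(A))$ is $\dual{H}$-stable, so being contained in $J(R)$ it is contained in the largest such ideal, namely $J_{\dual{H}}(R)$.

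For the reverse inclusion, observe that $J_{\dual{H}}(R)=(J(R):\dual{H})\subseteq J(R)$, whence $\Psi\big(J_{\dual{H}}(R)\big)=J_{\dual{H}}(R)\cap A\subseteq J(R)\cap A=J_H(A)$. Since $\Phi$ preserves inclusions and $\Phi\Psi=\mathrm{id}$ on the $\dual{H}$-stable ideals of $R$ (Corollary \ref{idHest2}), we get $J_{\dual{H}}(R)=\Phi\big(\Psi(J_{\dual{H}}(R))\big)\subseteq\Phi(J_H(A))=\psmash{J_H(A)}{H}$. Combining the two inclusions gives $J_{\dual{H}}(R)=\psmash{J_H(A)}{H}$, and then $J_{\dual{H}}(R)\cap A=\psmash{J_H(A)}{H}\cap A=\Psi\Phi(J_H(A))=J_H(A)$, which are the two asserted identities. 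Finally, since $\Phi$ is a bijection with $\Phi(0)=0$, we have: $A$ is $H$-semiprimitive $\iff J_H(A)=0 \iff \psmash{J_H(A)}{H}=0 \iff J_{\dual{H}}(R)=0 \iff R$ is $\dual{H}$-semiprimitive.

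I do not expect a genuine obstacle: once Proposition \ref{HJradical2} is available on both sides, the argument is a short chase through the correspondence of Corollary \ref{idHest2}. The only points requiring care are verifying that Proposition \ref{HJradical2} really applies to $(R,\dual{H})$ (finite-dimensionality of $\dual{H}$, globality of the $\dual{H}$-action, $R^{\dual{H}}=A$) and keeping the identifications $A=\psmash{A}{1_H}\subseteq R$ and $\psmash{I}{H}=IR$ consistent throughout. A slightly longer alternative would avoid invoking Proposition \ref{HJradical2} for $R$ and instead run the irreducible-module machinery of Subsection \ref{HradJacobson} directly for the $\dual{H}$-module algebra $R$, using Proposition \ref{HprimitiveDualHprimitiv}.
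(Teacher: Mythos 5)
Your proof is correct and follows essentially the same route as the paper: both arguments use $J_H(A)=J(R)\cap A$ (Proposition \ref{HJradical1}), the identification $J_{\dual{H}}(R)=(J(R):\dual{H})$ from Proposition \ref{HJradical2} applied to the global $\dual{H}$-action on $R$, and the correspondence $\Phi,\Psi$ of Corollary \ref{idHest2} to pass between the two inclusions. The only difference is cosmetic (the order in which the two inclusions are established), so there is nothing to add.
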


\begin{proof}
We will denote by $R := \psmash{A}{H}$. Let $\Phi$ and $\Psi$  as in the Corollary \ref{idHest2}. Since $J_{\dual{H}}(R)$ is $\dual{H}$-stable and $J_{\dual{H}}(R) \subseteq J(R)$,  by Proposition \ref{HJradical1}, we have
\begin{eqnarray*}
J_{\dual{H}}(R) & = & \Phi(\Psi(J_{\dual{H}}(R))) \,\, = \,\, \psmash{(J_{\dual{H}}(R) \cap A)}{H} \\
& \subseteq & \psmash{(J(R) \cap A)}{H} \,\, = \,\, \psmash{J_H(A)}{H}.
\end{eqnarray*}
On the other hand, also by Proposition \ref{HJradical1},
$$\psmash{J_H(A)}{H} = J_H(A)R \subseteq J(R)R = J(R).$$
Since $\psmash{J_H(A)}{H}$ is an $\dual{H}$-stable ideal, by Proposition \ref{HJradical2}, it follows that $$\psmash{J_H(A)}{H} \subseteq (J(R):\dual{H}) = J_{\dual{H}}(R).$$
Thus the second equality holds. The first one follows from the properties of $\Phi$ and $\Psi$ because
$$J_H(A) = \Psi(\Phi(J_H(A))) = \psmash{J_H(A)}{H} \cap A = J_{\dual{H}}(R) \cap A.$$
\end{proof}
\esp


\section{On the semiprimitivity and the semiprimality problems for the partial smash products} \label{semiprimitivProdSmash}

In this section we apply the results of the Sections \ref{IdHesteAHmodpar} and \ref{Hradicais} to investigate the semiprimitivity and the semiprimality problems mentioned in the introduction, for  partial smash products. Most of the results here generalize to the case of partial actions, the corresponding results on the semiprimitivity of the (global) smash product which appear in \cite[Section 4]{LMS}, improving them when $\kk$ has positive characteristic. We also prove a result about the semiprimality of the partial smash product (Theorem \ref{semiprimal1}), which generalizes \cite[Theorem 3.4]{LM} for the case of partial actions.


\esp
\begin{teo} \label{Semiprimitiv1} \emph{(See \cite[Theorem 4.1]{LMS})}
Let $H$ be a semisimple Hopf algebra and let $A$ be an $H$-semiprimitive partial $H$-module algebra. If every irreducible right (or left) $A$-module is finite-dimensional, then $\psmash{A}{H}$ is semiprimitive.
\end{teo}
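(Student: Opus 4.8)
\medskip
\noindent\textbf{Proof strategy.} Write $R := \psmash{A}{H}$. Since a semisimple Hopf algebra is finite-dimensional, every result of Sections \ref{IdHesteAHmodpar}--\ref{Hradicais} that requires $\dim_{\kk} H < \infty$ is available, and $\dual{H}$ acts globally on $R$ with $R^{\dual{H}} = A$. We must show $J(R) = 0$, and the plan proceeds in three steps.

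\medskip
\noindent\textit{Step 1: the radical meets $A$ trivially.} By Proposition \ref{idHest1}, $J(R) \cap A$ is an $H$-stable ideal of $A$, and by Remark \ref{JacA} it is contained in $J(A)$. Since $A$ is $H$-semiprimitive, Proposition \ref{HJradical2} shows that $J(A)$ contains no nonzero $H$-stable ideal, so $J(R) \cap A = 0$. Equivalently, by Theorem \ref{Hsemiprimitiv} this says that $J_{\dual{H}}(R) = (J(R):\dual{H}) = 0$, i.e.\ $J(R)$ contains no nonzero $\dual{H}$-stable ideal of $R$.

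\medskip
\noindent\textit{Step 2: every irreducible $R$-module is finite-dimensional.} Let $M$ be an irreducible $R$-module; by Proposition \ref{AHmodsimples} it is an irreducible partial $\modpar{A}{H}$-module. As a left $A$-module, $R$ is generated by $\{\psmash{1_A}{h_i}\}$ for a basis $\{h_i\}$ of $H$ (indeed $\psmash{a}{h} = (\psmash{a}{1_H})(\psmash{1_A}{h})$), so $M = Rm$ is a finitely generated $A$-module for every $0 \neq m \in M$; hence $M$ has a maximal $A$-submodule $N$, and $V := M/N$ is an irreducible $A$-module, finite-dimensional by hypothesis. Put $N' := \conj{x \in N}{H \ace x \subseteq N}$; using \itm{(PM3)} and \itm{(PM4)} one checks that $N'$ is a partial $\modpar{A}{H}$-submodule of $M$ contained in $N \subsetneq M$, so $N' = 0$ by irreducibility. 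Consequently the $A$-linear map $M \to V \ts \dual{H}$, $m \mapsto \sum_i \overline{h_i \ace m} \ts h_i^{*}$ (for the dual basis $\{h_i^{*}\}$ and the natural $A$-action on $V \ts \dual{H} \cong \Hom_{\kk}(H,V)$ given by $(a\cdot f)(h) = \sum (h_1 \cdot a) f(h_2)$), has kernel $N' = 0$, whence $\dim_{\kk} M \leq \dim_{\kk} H \cdot \dim_{\kk} V < \infty$. (Alternatively one realizes $M$ starting from a finite-dimensional irreducible $A$-module via Proposition \ref{AmodsimpAHmodsimp} or Remark \ref{leftAHmodext}, according to whether the ``right'' or the ``left'' form of the hypothesis is used.)

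\medskip
\noindent\textit{Step 3: conclusion, and the main obstacle.} It remains to upgrade $J(R) \cap A = 0$ to $J(R) = 0$. The idea is to prove that $J(R)$ is $\dual{H}$-stable; granting this, $J(R) \subseteq (J(R):\dual{H}) = J_{\dual{H}}(R) = 0$ by Step 1, so $R$ is semiprimitive (equivalently, $H$-semiprimitivity of $A$ is promoted to $\dual{H}$-semiprimitivity of $\psmash{A}{H}$, which here forces semiprimitivity). This $\dual{H}$-stability is the partial-action analogue of the stability property of the Jacobson radical exploited in \cite{LMS}: the finiteness of $\dim_{\kk} M$ for every irreducible $R$-module obtained in Step 2 is precisely what keeps $J(R) = \bigcap_M \ann_R(M)$ from escaping under the $\dual{H}$-action, since then $\dual{H} \acind \ann_R(M)$ is controlled by finitely many finite-dimensional representations. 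I expect this last step --- the stability argument carrying $H$-semiprimitivity of $A$ over to ordinary semiprimitivity of $\psmash{A}{H}$ --- to be the core difficulty; Steps 1 and 2 are essentially routine given the machinery already in place. Two points to watch along the way are the finite generation of $R$ over $A$ on the side dictated by the hypothesis, and checking that (unlike in \cite{LMS}) no restriction on $\car \kk$ is required.
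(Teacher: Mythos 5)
Your Steps 1 and 2 are correct: Step 1 is exactly Theorem \ref{Hsemiprimitiv} combined with Proposition \ref{HJradical2}, and Step 2 is a sound, self-contained verification that every irreducible $R$-module is finite-dimensional (the submodule $N'$ is indeed a partial $\modpar{A}{H}$-submodule by \itm{(PM3)} and \itm{(PM4)}, and its vanishing gives the embedding into $V \ts \dual{H}$). The proof is nonetheless incomplete, because Step 3 --- the $\dual{H}$-stability of $J\!\ppsmash{A}{H}$ --- is where all the substance lies, and you offer only a heuristic (``controlled by finitely many finite-dimensional representations'') in place of an argument. Knowing that each irreducible $R$-module $M$ is finite-dimensional does not by itself say anything about whether $\varphi \acind \ann_R(M)$ again annihilates an irreducible module. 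The indispensable missing ingredient is \cite[Corollary 3.2]{LM}, a Linchenko-type stability theorem: if a cosemisimple Hopf algebra acts globally on a \emph{finite-dimensional} algebra, the Jacobson radical of that algebra is stable under the action. This is a genuinely nontrivial theorem, not a formal consequence of your Steps 1 and 2, and without invoking it (or reproving it) Step 3 cannot be closed.

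For comparison, the paper performs the finite-dimensional reduction on the $A$-side rather than the $R$-side: it takes the irreducible $A$-modules $V_{\alpha}$, enlarges each to an irreducible partial $\modpar{A}{H}$-module $M_{\alpha}$ with $\dim_{\kk} M_{\alpha} \leq \dim_{\kk}(H)\dim_{\kk}(V_{\alpha})$ (Proposition \ref{AmodsimpAHmodsimp} or Remark \ref{leftAHmodext}), and sets $I_{\alpha} = \ann(M_{\alpha})$, which is $H$-stable with $\bigcap I_{\alpha} \subseteq \bigcap (\ann(V_{\alpha}):H) = J_H(A) = 0$ by Corollary \ref{JHAidprimit}. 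Thus $A$ is a subdirect product of the finite-dimensional $H$-primitive algebras $A_{\alpha} = A/I_{\alpha}$, hence by Proposition \ref{ProdSubdirSmash} $R$ is a subdirect product of the finite-dimensional algebras $R_{\alpha} = \psmash{A_{\alpha}}{H}$; each $R_{\alpha}$ is $\dual{H}$-semiprimitive by Theorem \ref{Hsemiprimitiv}, and \cite[Corollary 3.2]{LM} applied to the finite-dimensional $\dual{H}$-module algebra $R_{\alpha}$ gives $J(R_{\alpha}) = (J(R_{\alpha}):\dual{H}) = J_{\dual{H}}(R_{\alpha}) = 0$. If you prefer to stay with $R$ and its irreducible modules as in your sketch, you would still have to pass from each primitive ideal $\mc{P} = \ann_R(M)$ to $(\mc{P}:\dual{H})$, identify it as the annihilator of a finite-dimensional irreducible $(R,\dual{H})$-module, and apply the same stability theorem to the finite-dimensional quotient $R/(\mc{P}:\dual{H})$. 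Either way, a concrete finite-dimensional reduction plus the external stability result are required, and neither appears in your proposal.
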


\begin{proof} We will denote by $R := \psmash{A}{H}$. Let $\{ V_{\alpha} \}$ be the family of all irreducible (right) $A$-modules. By Proposition \ref{AmodsimpAHmodsimp}, for each $\alpha$, there exists an irreducible (right) partial $\modpar{A}{H}$-module $M_{\alpha}$ such that $V_{\alpha}$ is an $A$-submodule of $M_{\alpha}$. For each $\alpha$, we will denote by $\mpzc{P}_{\alpha} := \ann(V_{\alpha})$, $I_{\alpha} := \ann(M_{\alpha}) \subseteq (\mpzc{P}_{\alpha}:H)$ (see the proof of the Corollary \ref{IdprimitIdHprimit} and Remark \ref{leftAHmodext}) and $A_{\alpha} := A/I_{\alpha}$. By Corollary \ref{JHAidprimit}, we have
$$\bigcap I_{\alpha} \subseteq \bigcap (\mpzc{P}_{\alpha}:H) = J_H(A) = 0.$$
Thus $A$ is a subdirect product of the partial $H$-module algebras $A_{\alpha}$. By Proposition \ref{ProdSubdirSmash}, $R$ is a subdirect product of the algebras $R_{\alpha} := \psmash{A_{\alpha}}{H}$. Therefore, it is enough to show that each $R_{\alpha}$ is semiprimitive (see \cite[Proposition 2.3.4]{GW}).

Since $V_{\alpha}$ is finite-dimensional, so is $M_{\alpha}$ (Proposition \ref{AmodsimpAHmodsimp} or Remark \ref{leftAHmodext}). Moreover, from $I_{\alpha} = \ann(M_{\alpha})$ it follows that $A_{\alpha} = A/I_{\alpha} \subseteq \End_{\kk}(M_{\alpha})$ as algebras. Therefore $\dim_{\kk}(A_{\alpha}) \leq (\dim_{\kk}(M_{\alpha}))^2 < \infty$ and so $R_{\alpha}$ is finite-dimensional.

Since $A_{\alpha}$ is $H$-primitive it follows  by Theorem \ref{Hsemiprimitiv} that $R_{\alpha}$ is $\dual{H}$-semiprimitive. Moreover, because $\dual{H}$ is cosemisimple Hopf algebra acting on $R_\alpha$ globally and $R_{\alpha}$ is finite-dimensional, we deduce that $J(R_{\alpha})$ is an $\dual{H}$-stable ideal, by \cite[Corollary 3.2]{LM}. Hence
$$J(R_{\alpha}) = (J(R_{\alpha}):H) = J_{\dual{H}}(R_{\alpha}) = 0$$
and $R_{\alpha}$ is semiprimitive as desired.
\end{proof}
\esp


Now we observe that if $H$ is a Hopf algebra over a field $\kk$, $A$ is a partial $H$-module algebra and $\kk \subseteq \FF$ is any field extension,
then $H^{\FF} := H \ts_{\kk} \FF$ becomes a Hopf algebra over $\FF$ with coalgebra structure given by
$$\Delta_{H^{\FF}}(h \ts_{\kk} \alpha) = \sum (h_1 \ts_{\kk} \alpha) \ts_{\FF} (h_2 \ts_{\kk} 1_{\FF}) = \sum (h_1 \ts_{\kk} 1_{\FF}) \ts_{\FF} (h_2 \ts_{\kk} \alpha)$$
and
$$\varepsilon_{H^{\FF}}(h \ts_{\kk} \alpha) = \varepsilon_H(h) \alpha,$$
for $h \in H$ and $\alpha \in \FF$, where $\varepsilon_H$ is the counit of $H$. The antipode of $H^{\FF}$ is given by
$$S_{H^{\FF}}(h \ts_{\kk} \alpha) = S_H(h) \ts_{\kk} \alpha, \qquad h \in H, \,\, \alpha \in \FF,$$
where $S_H$ is the antipode of $H$. Moreover, $A^{\FF} := A \ts_{\kk} \FF$ becomes a partial $H^{\FF}$-module algebra by the
$H^{\FF}$-action defined as
$$(h \tssp \alpha) \cdot (a \tssp \beta) := (h \cdot a) \tssp (\alpha \beta), \qquad \alpha, \beta \in \FF, \,\, a \in A, \,\, h \in H.$$

With the above notations we can prove the following result.

\esp
\begin{lema} \label{extsepar}
Let $H$ be a finite-dimensional Hopf algebra over a field $\kk$ and let $A$ be a partial $H$-module algebra. If $\kk \subseteq \FF$ is a separable algebraic field extension, then
$$J_{H^{\FF}}(A^{\FF}) = J_H(A) \ts_{\kk} \FF.$$
In particular, $A$ is $H$-semiprimitive if and only if $A^{\FF}$ is $H^{\FF}$-semiprimitive.
\end{lema}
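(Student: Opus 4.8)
The plan is to reduce the statement about the $H^{\FF}$-Jacobson radical to the characterization $J_H(A) = (J(A):H)$ from Proposition \ref{HJradical2}, and similarly for $A^{\FF}$, so that the problem becomes controlling the ordinary Jacobson radical under the base field extension $\kk \subseteq \FF$. The key classical input is that for a separable algebraic extension $\kk \subseteq \FF$ one has $J(A^{\FF}) = J(A) \ts_{\kk} \FF$; this is a standard fact (it holds because separable algebraic extensions do not enlarge the Jacobson radical — one can reduce to the finite case and use that $\FF$ is then a separable, hence semisimple, $\kk$-algebra, so $A^{\FF}$ is a "nice" scalar extension of $A$). I would cite this rather than reprove it.

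First I would record the compatibility between the two constructions: an ideal $I \ideal A$ is $H$-stable if and only if $I \ts_{\kk} \FF$ is $H^{\FF}$-stable, and more generally $(X \ts_{\kk} \FF : H^{\FF}) = (X:H) \ts_{\kk} \FF$ for a $\kk$-subspace $X$ of $A$ (using that the $H^{\FF}$-action on $A^{\FF}$ is just the $H$-action extended $\FF$-linearly, and that $\{h_i \ts 1_{\FF}\}$ is an $\FF$-basis of $H^{\FF}$ when $\{h_i\}$ is a $\kk$-basis of $H$). The one subtlety here is checking $(X \ts \FF : H^{\FF}) \subseteq (X:H)\ts \FF$: given $u = \sum a_j \ts \alpha_j \in A^{\FF}$ with the $\alpha_j$ linearly independent over $\kk$ and $(h \ts 1)\cdot u \in X \ts \FF$ for all $h$, one expands $(h\ts 1)\cdot u = \sum (h\cdot a_j)\ts\alpha_j$ and uses linear independence of the $\alpha_j$ to conclude $h \cdot a_j \in X$ for each $j$ and each $h$, i.e. each $a_j \in (X:H)$.

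Then I would simply compute, using Proposition \ref{HJradical2} twice (it applies since $H^{\FF}$ is again finite-dimensional over $\FF$) together with the classical scalar-extension fact and the subspace identity above:
$$J_{H^{\FF}}(A^{\FF}) = \big(J(A^{\FF}) : H^{\FF}\big) = \big(J(A)\ts_{\kk}\FF : H^{\FF}\big) = (J(A):H)\ts_{\kk}\FF = J_H(A)\ts_{\kk}\FF.$$
The final "in particular" is immediate: $J_{H^{\FF}}(A^{\FF}) = 0$ if and only if $J_H(A)\ts_{\kk}\FF = 0$ if and only if $J_H(A) = 0$, since $\FF$ is faithfully flat (indeed free) over $\kk$.

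The main obstacle I expect is pinning down the correct hypotheses and reference for $J(A^{\FF}) = J(A)\ts_{\kk}\FF$: in general $J(A\ts_{\kk}\FF)$ can be strictly larger than $J(A)\ts_{\kk}\FF$ (this is exactly where the separability is used, and why algebraicity is also invoked so one may reduce to finite subextensions), so the argument hinges on quoting this in the right generality — it is the one place where the proof is not a formal manipulation. Everything else is routine bookkeeping with the base-change construction and the already-established identity $J_H(-) = (J(-):H)$.
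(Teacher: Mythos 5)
Your proposal is correct and follows essentially the same route as the paper: both reduce to $J_{H^{\FF}}(A^{\FF}) = (J(A^{\FF}):H^{\FF})$ via Proposition \ref{HJradical2}, invoke the classical fact $J(A^{\FF}) = J(A)\ts_{\kk}\FF$ for separable algebraic extensions (the paper cites \cite[Theorem 5.17]{L1}), and handle the nontrivial inclusion by expanding an element over $\FF$-coefficients linearly independent over $\kk$ and applying the $H$-action componentwise. Your packaging of that last step as the general identity $(X\ts_{\kk}\FF : H^{\FF}) = (X:H)\ts_{\kk}\FF$ is a slightly tidier bookkeeping of exactly the argument the paper carries out for $X = J(A)$.
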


\begin{proof}
Since $\kk \subseteq \FF$ is a separable algebraic extension we have $J(A^{\FF}) = J(A) \ts_{\kk} \FF$ (see \cite[Theorem 5.17]{L1}). Thus
$$H^{\FF} \cdot (J_H(A) \ts_{\kk} \FF) = (H \cdot J_H(A)) \ts_{\kk} \FF \subseteq J(A) \ts_{\kk} \FF = J(A^{\FF}).$$
By Proposition \ref{HJradical2}, $J_H(A) \ts_{\kk} \FF \subseteq (J(A^{\FF}):H^{\FF}) = J_{H^{\FF}}(A^{\FF})$.

On the other hand, if $u \in J_{H^{\FF}}(A^{\FF}) \subseteq J(A^{\FF}) = J(A) \ts_{\kk} \FF$, then there are $x_1, \ldots, x_n \in J(A)$ and $\beta_1, \ldots, \beta_n \in \FF$, with $\{ \beta_i \}$ linearly independent over $\kk$, such that $u = \sum x_i \ts \beta_i$. So, for any $h \in H$,
$$\sum (h \cdot x_i) \ts \beta_i = (h \ts 1_{\FF}) \cdot u \,\,\, \in \, J_{H^{\FF}}(A^{\FF}) \, \subseteq \, J(A) \ts_{\kk} \FF$$
(since $J_{H^{\FF}}(A^{\FF})$ is $H^{\FF}$-stable). From the linear independence of $\{ \beta_i \}$, we can deduce that $h \cdot x_i \in J(A)$, for every $1 \leq i \leq n$ and every $h \in H$. Therefore $x_i \in (J(A):H) = J_H(A)$ for every $i$, and $u \in J_H(A) \ts_{\kk} \FF$. Hence, also the inclusion $J_{H^{\FF}}(A^{\FF}) \subseteq J_H(A) \ts_{\kk} \FF$ holds.
\end{proof}

Now we are able to present our second result about the semiprimitivity of the partial smash product.

\esp

\begin{teo} \label{Semiprimitiv2} \emph{(See \cite[Theorem 4.2]{LMS})}
Let $H$ be a semisimple Hopf algebra over a field $\kk$ and let $A$ be an $H$-semiprime partial $H$-module algebra satisfying a polynomial identity. If $\kk$ is perfect and $A$ is affine over $\kk$, then $R = \psmash{A}{H}$ is semiprimitive.
\end{teo}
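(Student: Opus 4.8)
The plan is to deduce this from Theorem \ref{Semiprimitiv1} after extending scalars to the algebraic closure $\br{\kk}$ of $\kk$; the perfectness of $\kk$ is exactly what makes this scalar extension well behaved with respect to Jacobson radicals.

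First I would remark that, for an affine PI-algebra, the hypotheses \emph{$H$-semiprime} and \emph{$H$-semiprimitive} coincide. Indeed, by Braun's theorem the Jacobson radical $J(A)$ of an affine PI-algebra is nilpotent, so $J(A) \subseteq P(A)$; since also $P(A) \subseteq J(A)$ always, we get $J(A) = P(A)$. As $H$ is semisimple it is finite-dimensional, so Corollary \ref{radprimoeHradprimo} gives $P_H(A) = (P(A):H)$ and Proposition \ref{HJradical2} gives $J_H(A) = (J(A):H)$, whence $J_H(A) = P_H(A)$. Since $A$ is $H$-semiprime this common ideal is $0$, i.e. $A$ is $H$-semiprimitive.

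Next I would pass to $\FF := \br{\kk}$. Because $\kk$ is perfect, $\FF/\kk$ is a separable algebraic extension, and one checks that: $H^{\FF} = H \ts_\kk \FF$ is again a semisimple (in particular finite-dimensional) Hopf algebra over $\FF$; $A^{\FF} = A \ts_\kk \FF$ is a partial $H^{\FF}$-module algebra which is affine over $\FF$ and still satisfies a polynomial identity (a PI-algebra satisfies a multilinear identity, and such an identity is preserved by any base change); and the partial smash product commutes with base change, $\ppsmash{A^{\FF}}{H^{\FF}} \cong \ppsmash{A}{H} \ts_\kk \FF$. By Lemma \ref{extsepar}, $A^{\FF}$ is $H^{\FF}$-semiprimitive, and, using again that $\FF/\kk$ is separable algebraic, $J\!\paren{\ppsmash{A}{H} \ts_\kk \FF} = J\!\ppsmash{A}{H} \ts_\kk \FF$ by \cite[Theorem 5.17]{L1}. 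Hence $\ppsmash{A}{H}$ is semiprimitive as soon as $\ppsmash{A^{\FF}}{H^{\FF}}$ is, and we may assume that $\kk$ is algebraically closed.

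Over an algebraically closed field, every irreducible module over an affine PI-algebra is finite-dimensional. Indeed, if $\mpzc{P}$ is a primitive ideal of $A$ then $A/\mpzc{P}$ is primitive PI, hence simple Artinian, so $A/\mpzc{P} \cong M_n(D)$ for a division ring $D$ finite-dimensional over its centre $C$; since $M_n(D)$ is affine over $\kk$ and finitely generated as a $C$-module, $C$ is an affine $\kk$-algebra by the Artin--Tate lemma, and being a field it must equal $\kk$ by Zariski's lemma; as $\kk$ is algebraically closed $D = \kk$, so $A/\mpzc{P} \cong M_n(\kk)$, whose unique simple module has dimension $n$. Thus every irreducible right (or left) $A$-module is finite-dimensional, and Theorem \ref{Semiprimitiv1} yields that $\ppsmash{A}{H}$ is semiprimitive, as desired.

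The bulk of the work is the (routine but indispensable) bookkeeping around the scalar extension --- checking that $H^{\FF}$ remains semisimple, that $A^{\FF}$ remains affine PI, and that $\ppsmash{\cdot}{\cdot}$ commutes with $- \ts_\kk \FF$ --- together with pinning down a precise reference for the fact that affine PI-algebras over $\br{\kk}$ have only finite-dimensional simple modules; once these are settled the statement follows at once from Theorem \ref{Semiprimitiv1} and the earlier results on the $H$-radicals. The one genuinely essential use of the perfectness hypothesis is in this scalar-extension step, where it guarantees that $\br{\kk}/\kk$ is separable so that both Lemma \ref{extsepar} and the identity $J(-\ts_\kk\br{\kk}) = J(-)\ts_\kk\br{\kk}$ are available.
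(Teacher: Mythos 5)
Your proof is correct and follows essentially the same route as the paper: use Braun's theorem to upgrade $H$-semiprime to $H$-semiprimitive, extend scalars to $\br{\kk}$ via Lemma \ref{extsepar} (where perfectness gives separability), and then apply Theorem \ref{Semiprimitiv1} because irreducible modules over affine PI-algebras are finite-dimensional. The only difference is that you spell out the Kaplansky/Artin--Tate/Zariski argument for that last fact, where the paper simply cites \cite[Lemma 3.7]{LMS}.
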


\begin{proof}
Since $A$ is a PI-algebra which is affine over $\kk$, we have that $J(A) = P(A)$ (see \cite[Corollary 4.4.6]{R}). Thus, $A$ is actually $H$-semiprimitive. Denote by $\bar{\kk}$ the algebraic closure of $\kk$. As $\kk$ is a perfect field it follows that the extension $\kk \subseteq \bar{\kk}$ is separable. Taking $\FF = \bar{\kk}$ in the Lemma \ref{extsepar}, we have that $\bar{A}$ is $\bar{H}$-semiprimitive, where $\bar{H} = H \ts_{\kk} \bar{\kk}$ and $\bar{A} = A \ts_{\kk} \bar{\kk}$.

Moreover, $\bar{A}$ is a PI-algebra (see \cite[Theorem 6.1.1]{R}) which is affine over $\bar{\kk}$, and so every irreducible right $\bar{A}$-module is finite-dimensional (see \cite[Lemma 3.7]{LMS}). Since $\bar{H}$ is semisimple (see \cite[Corollary 2.2.2]{M}), it follows from Theorem \ref{Semiprimitiv1} that $R \ts_{\kk} \bar{\kk} \cong \psmash{\bar{A}}{\bar{H}}$ is semiprimitive. Thus, by \cite[Theorem 5.17]{L1}, $R$ is semiprimitive.
\end{proof}
\esp


For the next result we will need one more lemma. Before, observe that if $I$ is a right ideal of $A$, then so is $H \cdot I$. This follows because, for any $h \in H$, $x \in I$ and $a \in A$,
$$(h \cdot x)a = \sum (h_1 \cdot x)((h_2 S(h_3)) \cdot a) = \sum h_1 \cdot (x (S(h_2) \cdot a)) \,\, \in \, H \cdot (IA) \, \subseteq \, H \cdot I,$$
so $(H \cdot I)A \, \subseteq H \cdot I$.

\esp
\begin{lema} \label{JacHest2} \emph{(See \cite[Remark 3.9]{LMS})}
Let $H$ be a finite-dimensional cosemisimple Hopf algebra and let $A$ be an $H$-module algebra (global action). If $A$ is locally finite, then $J(A)$ is $H$-stable.
\end{lema}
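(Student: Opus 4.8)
The plan is to prove that the two-sided ideal $K := A(H\cdot J(A))A$ of $A$ generated by $H\cdot J(A)$ is a \emph{nil} ideal. Since every nil ideal is contained in the Jacobson radical and, by \itm{(PA1)}, $J(A)=1_H\cdot J(A)\subseteq K$, this forces $K=J(A)$ and hence $H\cdot J(A)\subseteq J(A)$, which is precisely the assertion of the lemma.

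Two preliminary observations will be used repeatedly. First, since $A$ is locally finite it is algebraic over $\kk$, and it is well known (and easy to see, since each $x\in J(A)$ lies in the nilpotent radical of the finite-dimensional commutative algebra $\kk[x]$, so that $x$ is nilpotent) that $J(A)$ is then a nil ideal. Second, for every finite subset $S\subseteq A$ there is a \emph{finite-dimensional $H$-stable subalgebra} $B$ of $A$ with $S\subseteq B$: put $V:=H\cdot(\kk S)$, which is finite-dimensional; because the action is global one has $H\cdot V\subseteq V$ (as $h\cdot(g\cdot s)=(hg)\cdot s$), so $V$ is an $H$-submodule; and the subalgebra $B$ generated by $V$ is finite-dimensional by local finiteness and $H$-stable, since $h\cdot 1_A=\varepsilon(h)1_A$ and, iterating \itm{(PA3)}, $h\cdot(v_1\cdots v_k)=\sum(h_1\cdot v_1)\cdots(h_k\cdot v_k)\in V^k\subseteq B$ for $v_1,\dots,v_k\in V$.

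Now fix an arbitrary element $z\in K$ and write $z=\sum_{i=1}^{n}a_i(h_i\cdot x_i)b_i$ with $x_i\in J(A)$, $a_i,b_i\in A$ and $h_i\in H$. Applying the second observation to $S=\{x_i,a_i,b_i:1\le i\le n\}$, we obtain a finite-dimensional $H$-stable subalgebra $B$ of $A$ containing all the $x_i$, $a_i$, $b_i$, and also all the $h_i\cdot x_i$ (these lie in $H\cdot x_i\subseteq V\subseteq B$). By the first observation $J(A)\cap B$ is a nil ideal of $B$, hence $J(A)\cap B\subseteq J(B)$; in particular each $x_i\in J(B)$. Since $B$ is a finite-dimensional $H$-module algebra (global action) over the cosemisimple Hopf algebra $H$, \cite[Corollary 3.2]{LM} yields that $J(B)$ is $H$-stable, so $h_i\cdot x_i\in J(B)$ and therefore $z=\sum a_i(h_i\cdot x_i)b_i\in B\,J(B)\,B\subseteq J(B)$. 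As $B$ is finite-dimensional, $J(B)$ is nilpotent, so $z$ is nilpotent. Hence $K$ is nil, and the proof is complete.

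The main obstacle is exactly the interaction between $A$ and its finite-dimensional subalgebras: one cannot simply reduce to the finite-dimensional case \cite[Corollary 3.2]{LM} by intersecting with a subalgebra $B$, because in general neither $J(B)\subseteq J(A)$ nor $J(A)\cap B\subseteq J(B)$ holds. The device of working elementwise inside the ideal $K$ and invoking local finiteness twice — once to manufacture $B$ finite-dimensional and $H$-stable, and once (through algebraicity, which makes $J(A)$ nil) to guarantee $J(A)\cap B\subseteq J(B)$ — is what makes the reduction legitimate.
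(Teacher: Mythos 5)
Your proof is correct and follows essentially the same strategy as the paper: reduce to the finite-dimensional $H$-stable subalgebra generated by $\sum H\cdot x_i$, use local finiteness to see that $J(A)$ is nil so that the $x_i$ land in $J(B)$, invoke \cite[Corollary 3.2]{LM} for the $H$-stability of $J(B)$, and conclude that the elements in question are nilpotent. The only (immaterial) difference is packaging: you show the two-sided ideal $A(H\cdot J(A))A$ is nil, whereas the paper first observes that $H\cdot J(A)$ is itself a right ideal and shows directly that it is a nil right ideal, hence contained in $J(A)$.
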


\begin{proof}
Fix an element $y \in H \cdot J(A)$, and let $x_1, \ldots, x_n \in J(A)$ and $h_1, \ldots, h_n \in H$  such that $y = \sum h_i \cdot x_i$. Since $\sum H \cdot x_i$ is a finite-dimensional subspace of $A$, it generates a finite-dimensional subalgebra $B$ of $A$. Clearly $H \cdot B \subseteq B$, so $B$ is an $H$-module subalgebra of $A$.

Since $A$ is locally finite, $J(A)$ is a nil ideal of $A$, therefore each $B x_i B \subseteq J(A)$ is a nil ideal of $B$ and so it is contained in $J(B)$. In particular, each $x_i \in J(B)$. Since $B$ is finite-dimensional, it follows from \cite[Corollary 3.2]{LM} that $H \cdot J(B) \subseteq J(B)$. Thus, we have that $y = \sum h_i \cdot x_i \in J(B)$ is a nilpotent element.

From this and the previous observation, it follows that $H \cdot J(A)$ is a nil right ideal of $A$, so it is contained in $J(A)$. Hence $J(A)$ is $H$-stable.
\end{proof}

\esp

\begin{teo} \label{Semiprimitiv3} \emph{(See \cite[Corollary 4.4]{LMS})}
Let $H$ be a semisimple Hopf algebra and let $A$ be an $H$-semiprimitive partial $H$-module algebra. If $A$ is locally finite, then $\psmash{A}{H}$ is semiprimitive.
\end{teo}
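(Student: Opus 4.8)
The plan is to deduce the semiprimitivity of $R := \psmash{A}{H}$ from the identity $J_{\dual{H}}(R) = J(R)$ together with Theorem \ref{Hsemiprimitiv}. Since $A$ is $H$-semiprimitive we have $J_H(A) = 0$, so Theorem \ref{Hsemiprimitiv} gives $J_{\dual{H}}(R) = \psmash{J_H(A)}{H} = 0$. Hence it suffices to show that $J(R)$ is $\dual{H}$-stable: once this is known, Proposition \ref{HJradical2} applied to $R$ (which carries the global $\dual{H}$-action, and $\dual{H}$ is finite-dimensional) gives $J_{\dual{H}}(R) = (J(R):\dual{H})$, and $\dual{H}$-stability forces $(J(R):\dual{H}) = J(R)$; therefore $J(R) = J_{\dual{H}}(R) = 0$.

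To obtain the $\dual{H}$-stability of $J(R)$ I would invoke Lemma \ref{JacHest2} with the Hopf algebra $\dual{H}$ — which is finite-dimensional and cosemisimple because $H$ is semisimple — acting globally on $R$. Every hypothesis of that lemma is then in place except one: one must check that $R = \psmash{A}{H}$ is locally finite. This is the technical heart of the argument.

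For local finiteness of $R$, recall that $R$ is spanned by the elements $\psmash{a}{h}$, $a\in A$, $h\in H$, so a given finite subset of $R$ involves only finitely many $a_1,\dots,a_n\in A$. Put $V := \kk\text{-span}\bigl(\{a_1,\dots,a_n\}\cup(H\cdot a_1)\cup\dots\cup(H\cdot a_n)\cup(H\cdot 1_A)\bigr)$; since $H$ is finite-dimensional, $V$ is finite-dimensional and contains $1_A$. Let $B$ be the subalgebra of $A$ generated by $V$. Using \itm{(PA3)} one checks that $\{a\in A: H\cdot a\subseteq B\}$ is a subalgebra of $A$, and using \itm{(PA4)} (which writes $h\cdot(g\cdot x) = \sum(h_1\cdot 1_A)\bigl((h_2 g)\cdot x\bigr)$) one checks it contains $V$, because $H\cdot(H\cdot a_i)\subseteq(H\cdot 1_A)(H\cdot a_i)\subseteq B$ and $H\cdot(H\cdot 1_A)\subseteq(H\cdot 1_A)(H\cdot 1_A)\subseteq B$. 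Thus $H\cdot B\subseteq B$, so $B$ is a partial $H$-module subalgebra of $A$ containing $1_A$ and all the $a_i$; since $A$ is locally finite and $V$ is finite-dimensional, $B$ is finite-dimensional. Now $B\#H$ is a subalgebra of $A\#H$ (its multiplication only uses the restricted action $H\cdot B\subseteq B$), and $\psmash{B}{H}=(B\#H)(1_A\#1_H)$ is a unital subalgebra of $R$ with $\dim_{\kk}\psmash{B}{H}\le(\dim_{\kk}B)(\dim_{\kk}H)<\infty$; it contains all the $\psmash{a_i}{h}$ and hence the chosen finite subset of $R$. Therefore every finitely generated subalgebra of $R$ is finite-dimensional, i.e. $R$ is locally finite.

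The main obstacle is precisely this last step: building, inside the locally finite algebra $A$, a finite-dimensional partial $H$-module subalgebra $B$ that is ``saturated'' enough to detect the given elements of $R$; the delicate point is keeping $H\cdot B\subseteq B$, which is where \itm{(PA3)}--\itm{(PA4)} (rather than the honest module-algebra axioms) must be used — in particular the term $h_1\cdot 1_A$ in \itm{(PA4)} forces one to put $H\cdot 1_A$ into $V$ from the start. Once $R$ is known to be locally finite, the theorem follows by combining Lemma \ref{JacHest2}, Proposition \ref{HJradical2}, and Theorem \ref{Hsemiprimitiv} exactly as indicated in the first paragraph.
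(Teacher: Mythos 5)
Your proposal is correct and follows essentially the same route as the paper: reduce to showing $\psmash{A}{H}$ is locally finite, apply Lemma \ref{JacHest2} to the global $\dual{H}$-action to get $\dual{H}$-stability of $J\!\ppsmash{A}{H}$, and conclude via Proposition \ref{HJradical2} and Theorem \ref{Hsemiprimitiv}. Your construction of the finite-dimensional $H$-stable subalgebra $B$ (seeded with $H\cdot 1_A$ and the $H$-orbits of the generators) is just a more detailed version of the paper's own local-finiteness argument.
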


\begin{proof}
By Theorem \ref{Hsemiprimitiv}, the smash product $\psmash{A}{H}$ is an $\dual{H}$-semiprimitive $\dual{H}$-module algebra (global action). Also, $\dual{H}$ is a finite-dimensional cosemisimple Hopf algebra, because $H$ is semisimple. Moreover, $\psmash{A}{H}$ is locally finite, because if $x_1, \ldots, x_n \in \psmash{A}{H}$ and  $x_i = \sum_j \psmash{a_{ij}}{h_{ij}}$, $1\leq i \leq n$, then there is a finite-dimensional $H$-stable subalgebra $B$ of $A$ generated by $\sum_{i,j} (H \cdot 1_A)(H \cdot a_{ij})$, so that $x_1, \ldots, x_n$ are elements of the finite-dimensional subalgebra $\psmash{B}{H}$ of $\psmash{A}{H}$. By Lemma \ref{JacHest2}, $J(\psmash{A}{H})$ is $\dual{H}$-stable. Hence, by Proposition \ref{HJradical2},
$$J \! \paren{\psmash{A}{H}} = \paren{J \! \paren{\psmash{A}{H}}:\dual{H}} = J_{\dual{H}} \!\! \paren{\psmash{A}{H}} = 0$$
and $\psmash{A}{H}$ is semiprimitive.
\end{proof}

\esp

\begin{cor} \label{Semiprimitiv31} \emph{(See \cite[Theorem 4.2]{LMS})}
Let $H$ be a semisimple Hopf algebra over a field $\kk$ and let $A$ be an $H$-semiprime partial $H$-module algebra satisfying a polynomial identity. If $A$ is algebraic over $\kk$, then $\psmash{A}{H}$ is semiprimitive.
\end{cor}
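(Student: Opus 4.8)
The plan is to deduce this from Theorem \ref{Semiprimitiv3}: once I know that $A$ is locally finite and that $A$ is in fact $H$-semiprimitive, that theorem gives the conclusion immediately. So the work splits into two classical inputs from PI-theory plus a short radical computation.

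First I would argue that $A$ is locally finite. Any finitely generated subalgebra of $A$ is an affine PI-algebra all of whose generators are algebraic over $\kk$, and such an algebra is finite-dimensional (Shirshov's height theorem; see \cite{R}). As a by-product, $J(A)$ is a nil ideal: an element $a\in J(A)$ is algebraic, and if $t^{k}q(t)$ with $q(0)\neq 0$ is a polynomial it annihilates, then $a^{k}q(a)=0$ with $q(a)$ a unit of $A$ (a unit of $\kk$ plus an element of $J(A)$), so $a^{k}=0$.

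Next I would upgrade ``$H$-semiprime'' to ``$H$-semiprimitive''. Since $H$ is finite-dimensional, Proposition \ref{HJradical2} identifies $J_H(A)$ with $(J(A):H)$, an $H$-stable ideal sitting inside the nil ideal $J(A)$. The key step is the PI fact that a nil ideal of a PI-ring is contained in the prime radical: for any prime ideal $\mf{p}$ of $A$ the quotient $A/\mf{p}$ is a prime PI-ring, hence a semiprime Goldie ring by Posner's theorem, and therefore has no nonzero nil ideals, so $J(A)\subseteq\mf{p}$; intersecting over all primes, $J(A)\subseteq P(A)$. Being $H$-stable, $J_H(A)$ then lies in $(P(A):H)=P_H(A)$, which vanishes because $A$ is $H$-semiprime (Corollary \ref{radprimoeHradprimo} together with Proposition \ref{caracHMAlgHsemiprimo}). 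Thus $J_H(A)=0$, i.e.\ $A$ is $H$-semiprimitive.

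Having established that $A$ is locally finite and $H$-semiprimitive and that $H$ is semisimple, I would finish by quoting Theorem \ref{Semiprimitiv3} to conclude that $\psmash{A}{H}$ is semiprimitive. I expect the main (and essentially the only) obstacle to be locating and assembling the two PI-theoretic facts used above --- the local finiteness of algebraic PI-algebras and the coincidence of the nil and prime radicals in a PI-ring; the radical bookkeeping developed in Section \ref{Hradicais} then does the rest. Note that the ``algebraic'' hypothesis enters precisely to make $J(A)$ nil (hence $J(A)=P(A)$), which is what allows $H$-semiprimality to be upgraded to $H$-semiprimitivity; dropping it would return one to the affine situation treated in Theorem \ref{Semiprimitiv2}.
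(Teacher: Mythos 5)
Your proposal is correct and follows essentially the same route as the paper: establish that $A$ is locally finite and that $J(A)=P(A)$ (so $H$-semiprimality upgrades to $H$-semiprimitivity via Proposition \ref{HJradical2} and Corollary \ref{radprimoeHradprimo}), then invoke Theorem \ref{Semiprimitiv3}. The only difference is that you sketch proofs of the two classical PI facts (local finiteness via Shirshov, and $J(A)\subseteq P(A)$ via nilness of $J(A)$ plus Posner), whereas the paper simply cites them.
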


\begin{proof}
Under these conditions, we have $J(A) = P(A)$ (see \cite[Corollary 4.19]{L1} and \cite[Theorem 3, p. 36]{J}), thus $A$ is actually $H$-semiprimitive. Moreover, $A$ is locally finite (see \cite[Theorem 6.4.3]{H}), therefore the result follows from Theorem \ref{Semiprimitiv3}.
\end{proof}

\esp

\begin{cor} \label{Semiprimitiv4}
Let $H$ be a semisimple Hopf algebra over a field $\kk$ and let $A$ be a partial $H$-module algebra. If
\begin{itemize}
\item[\itm{(1)}] every irreducible right $A$-module is finite-dimensional, or
\item[\itm{(2)}] $\kk$ is perfect and $A$ is a PI-algebra which is affine over $\kk$, or
\item[\itm{(3)}] $A$ is locally finite (in particular, if $A$ is a PI-algebra which is algebraic over $\kk$),
\end{itemize}
then $J \! \paren{\psmash{A}{H}} = \psmash{J_H(A)}{H}$.
\end{cor}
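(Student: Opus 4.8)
The statement to be proved is Corollary~\ref{Semiprimitiv4}, which asserts that under any of the three hypotheses \itm{(1)}, \itm{(2)}, \itm{(3)}, we have $J(\psmash{A}{H}) = \psmash{J_H(A)}{H}$. The plan is to reduce to the $H$-semiprimitive case by passing to the quotient $A/J_H(A)$, apply the corresponding theorem among \ref{Semiprimitiv1}, \ref{Semiprimitiv2}, \ref{Semiprimitiv3} to conclude that the partial smash product of that quotient is semiprimitive, and then identify the Jacobson radical of $\psmash{A}{H}$ with $\psmash{J_H(A)}{H}$ via the isomorphism $\quoc{\paren{\psmash{A}{H}}}{\paren{\psmash{J_H(A)}{H}}} \cong \psmash{(\quoc{A}{J_H(A)})}{H}$ already recorded in Section~\ref{IdHesteAHmodpar}.

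First I would set $\bar{A} := A/J_H(A)$ with the induced partial $H$-action (this is legitimate since $J_H(A)$ is an $H$-stable ideal, by the discussion preceding Proposition \ref{HJradical2}), and observe that $\bar{A}$ is $H$-semiprimitive: indeed $J_H(\bar{A}) = J_H(A)/J_H(A) = 0$, which follows from $J_H(A) = (J(A):H)$ together with the fact that taking the $H$-radical is idempotent in the appropriate sense; alternatively one checks directly that $J(\bar A)$ contains no nonzero $H$-stable ideal because $J(\bar A) = J(A)/J_H(A)$ contains no nonzero $H$-stable ideal by Proposition \ref{HJradical2}. Next I would verify that whichever of the hypotheses \itm{(1)}, \itm{(2)}, \itm{(3)} holds for $A$ is inherited by $\bar{A}$: an irreducible $\bar{A}$-module is an irreducible $A$-module (so \itm{(1)} passes); a quotient of an affine PI-algebra over $\kk$ is again an affine PI-algebra over $\kk$ (so \itm{(2)} passes, the perfectness of $\kk$ being unchanged); and a quotient of a locally finite algebra is locally finite, with the parenthetical case reducing to this via \cite[Theorem 6.4.3]{H} as in Corollary \ref{Semiprimitiv31} (so \itm{(3)} passes). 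Applying Theorem \ref{Semiprimitiv1}, \ref{Semiprimitiv2}, or \ref{Semiprimitiv3} respectively, I conclude that $\psmash{\bar{A}}{H}$ is semiprimitive, i.e. $J(\psmash{\bar{A}}{H}) = 0$.

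Finally I would transport this back: using the isomorphism $\psmash{\bar{A}}{H} \cong \quoc{\paren{\psmash{A}{H}}}{\paren{\psmash{J_H(A)}{H}}}$, the vanishing of the Jacobson radical of the left-hand side gives $J(\psmash{A}{H}) \subseteq \psmash{J_H(A)}{H}$. For the reverse inclusion, by Theorem \ref{Hsemiprimitiv} we have $\psmash{J_H(A)}{H} = J_{\dual{H}}(\psmash{A}{H}) \subseteq J(\psmash{A}{H})$, the containment being the standard one between an $H$-Jacobson radical and the ordinary Jacobson radical (as used inside the proof of Theorem \ref{Hsemiprimitiv} via Proposition \ref{HJradical1} and Remark \ref{JacA}). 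Combining the two inclusions yields the claimed equality.

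The only point requiring genuine care — rather than being purely formal — is checking that each hypothesis \itm{(1)}--\itm{(3)} really does descend to $\bar A$, and in case \itm{(2)} that $\bar A$ remains $H$-semiprime after the reduction (so that Theorem \ref{Semiprimitiv2} applies with its $H$-semiprime hypothesis, which holds because $J_H(A) \supseteq P_H(A)$ forces $\bar A = A/J_H(A)$ to have $P_H(\bar A) = 0$, i.e. $\bar A$ is $H$-semiprime by Proposition \ref{caracHMAlgHsemiprimo}). Everything else is a direct assembly of Theorems \ref{Semiprimitiv1}--\ref{Semiprimitiv3}, Theorem \ref{Hsemiprimitiv}, and the quotient isomorphism for partial smash products; I do not anticipate any serious obstacle.
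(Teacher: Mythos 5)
Your proof is correct and follows essentially the same route as the paper: the inclusion $\psmash{J_H(A)}{H} = J_{\dual{H}}(\psmash{A}{H}) \subseteq J(\psmash{A}{H})$ from Theorem \ref{Hsemiprimitiv}, and the reverse inclusion by applying Theorems \ref{Semiprimitiv1}--\ref{Semiprimitiv3} to the $H$-semiprimitive quotient $A/J_H(A)$ via the isomorphism $\paren{\psmash{A}{H}}/\paren{\psmash{J_H(A)}{H}} \cong \psmash{(A/J_H(A))}{H}$. The extra care you take in checking that the hypotheses descend to the quotient is a worthwhile elaboration of what the paper leaves implicit (only note that $P_H(\bar A)=0$ is most cleanly seen from $P_H(\bar A)\subseteq J_H(\bar A)=0$, not merely from $P_H(A)\subseteq J_H(A)$).
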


\begin{proof}
By Theorem \ref{Hsemiprimitiv}, $\psmash{J_H(A)}{H} = J_{\dual{H}} \!\! \paren{\psmash{A}{H}} \subseteq J \! \paren{\psmash{A}{H}}$. On the other hand, $A/J_H(A)$ is $H$-semiprimitive (so, in particular, $H$-semiprime). By Theorems \ref{Semiprimitiv1}, \ref{Semiprimitiv2} or \ref{Semiprimitiv3} (according the hypotheses), it follows that the factor algebra $\paren{\psmash{A}{H}}/ \big( \psmash{J_H(A)}{H} \big) \cong \psmash{(A/J_H(A))}{H}$ is semiprimitive, so that $J \! \paren{\psmash{A}{H}} \subseteq \psmash{J_H(A)}{H}$.
\end{proof}
\esp


The next theorem generalizes \cite[Theorem 3.4]{LM} for the case of partial actions. The proof follows almost the same steps as in \cite{LM}, but we include here our proof for the sake of completeness.

\esp
\begin{teo} \label{semiprimal1}
Let $H$ be a semisimple Hopf algebra and let $A$ be an $H$-semiprime partial $H$-module algebra satisfying a polynomial identity. Then $\psmash{A}{H}$ is semiprime.
\end{teo}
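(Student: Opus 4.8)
The plan is to pass to the partial smash product $R:=\psmash{A}{H}$ and to reduce the assertion ``$R$ is semiprime'' to the statement that the prime radical $P(R)$ is stable under the canonical global action of $\dual{H}$ on $R$. First I would record that $R$ is a PI\nobreakdash-algebra: since $H$ is semisimple it is finite-dimensional, so $A\#H$ is free of rank $n=\dim_{\kk}H$ as a left $A$-module and its action on itself by right multiplication embeds $A\#H$ into $\End_A\big({}_A(A\#H)\big)\cong M_n(A^{\mathrm{op}})$; as $A$, hence $A^{\mathrm{op}}$, hence $M_n(A^{\mathrm{op}})$, satisfies a polynomial identity, so does $A\#H$, and therefore so does its subring $R=(A\#H)(1_A\#1_H)$.

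Next, since $A$ is $H$-semiprime and $H$ is finite-dimensional, Theorem \ref{HsemiprimoeHcosemiprimo} gives that $R$ is an $\dual{H}$-semiprime $\dual{H}$-module algebra (global action) and that $P_{\dual{H}}(R)=\psmash{P_H(A)}{H}=0$. Applying Corollary \ref{radprimoeHradprimo} to the finite-dimensional Hopf algebra $\dual{H}$ acting on $R$, we get $(P(R):\dual{H})=P_{\dual{H}}(R)=0$; in other words, $P(R)$ contains no nonzero $\dual{H}$-stable ideal. Consequently it suffices to prove that $P(R)$ is itself $\dual{H}$-stable, for then $P(R)=(P(R):\dual{H})=0$ and $R$ is semiprime.

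The remaining step --- showing $P(R)$ is $\dual{H}$-stable --- is the crux, and here I would follow the argument of \cite[Theorem 3.4]{LM} adapted to the partial setting. One uses that $\dual{H}$ is a finite-dimensional \emph{cosemisimple} Hopf algebra (because $H$ is semisimple) acting globally on the PI\nobreakdash-algebra $R$, together with the PI structure theory (local nilpotence of nil one-sided ideals in PI\nobreakdash-rings, Rowen's theorem that a nonzero ideal of a semiprime PI\nobreakdash-ring meets the center, and Posner's theorem) in order to reduce the stability question for the nil ideal $P(R)$ to finite-dimensional quotient situations, where \cite[Corollary 3.2]{LM} already yields that the Jacobson radical --- which there coincides with the prime radical --- is $\dual{H}$-stable; one then transports this conclusion back through the correspondences of Section \ref{IdHesteAHmodpar} and the radical identities of Section \ref{Hradicais}. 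The main obstacle is precisely this reduction: unlike the locally finite and the affine PI cases (Theorems \ref{Semiprimitiv3} and \ref{Semiprimitiv2}, and Lemma \ref{JacHest2}), where one may simply cut $A$ down to a finite-dimensional subalgebra and invoke \cite[Corollary 3.2]{LM} directly, here $R$ need be neither affine nor locally finite, so the passage to the finite-dimensional case must be engineered through PI theory, and carrying this out through the partial smash product construction is where the real work lies.
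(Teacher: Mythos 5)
Your reduction is sound as far as it goes: $R=\psmash{A}{H}$ is indeed a PI-algebra, Theorem \ref{HsemiprimoeHcosemiprimo} gives $P_{\dual{H}}(R)=0$, and by Corollary \ref{radprimoeHradprimo} it would therefore suffice to show that $P(R)$ is $\dual{H}$-stable. But that last step \emph{is} the theorem, and you have not proved it: you only announce that you would ``follow the argument of \cite[Theorem 3.4]{LM} adapted to the partial setting,'' with a sketch (local nilpotence, Rowen, Posner, reduction to finite-dimensional quotients) that is not an argument and does not reflect how the reduction is actually achieved. Nothing established in the paper up to this point yields $\dual{H}$-stability of $P(R)$, or of any nil ideal of $R$, for a general PI-algebra that is neither affine nor locally finite, so the proposal is incomplete exactly where the difficulty lies.

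The device that closes this gap is Amitsur's polynomial-ring trick, which converts the semiprimality question into a semiprimitivity one. Extend the partial action to $A[t]$ by $h\cdot t:=(h\cdot 1_A)t$ and check that $(I[t]:H)=(I:H)[t]$ for every ideal $I$ of $A$. By Amitsur's theorem, $J(A[t])=N[t]$ for some nil ideal $N$ of $A$; since $A$ is PI, $N\subseteq P(A)$, hence $(N:H)\subseteq (P(A):H)=P_H(A)=0$ and so $J_H(A[t])=(N:H)[t]=0$. Thus $A[t]$ is an $H$-semiprimitive PI-algebra, and Theorem \ref{Hsemiprimitiv} makes $\psmash{A[t]}{H}$ an $\dual{H}$-semiprimitive (global) $\dual{H}$-module PI-algebra. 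The only external input then needed is \cite[Corollary 3.2]{LM}: since $\dual{H}$ is cosemisimple, every nilpotent ideal $\mc{I}$ of $\psmash{A[t]}{H}$ satisfies $\dual{H}\cdot\mc{I}\subseteq J\!\paren{\psmash{A[t]}{H}}$, whence $\mc{I}\subseteq \paren{J\!\paren{\psmash{A[t]}{H}}:\dual{H}}=J_{\dual{H}}\!\paren{\psmash{A[t]}{H}}=0$. Therefore $\psmash{A[t]}{H}\cong\paren{\psmash{A}{H}}[t]$ is semiprime, and so is $\psmash{A}{H}$. If you wish to keep your formulation, the same trick proves directly that $P(R)$ contains no nonzero nilpotent ideal of $R$; but some passage through $A[t]$ (or $R[t]$) is needed and cannot be replaced by a reduction to finite-dimensional quotients.
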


\begin{proof}
Since $A$ is a PI-algebra, if $I$ is a nil ideal of $A$ then $I \subseteq P(A)$, so $(I:H) \subseteq (P(A):H) = P_H(A) = 0$.

The polynomial ring $A[t]$ has a structure of partial $H$-module algebra if we extend the action of $H$ by $h \cdot t := (h \cdot 1_A)t$, for $h \in H$. Moreover, for any ideal $I$ of $A$, $(I[t]:H) = (I:H)[t]$, since
$$p(t) = \sum a_i t^i \; \in \, (I[t]:H) \quad \Longleftrightarrow \quad H \cdot p(t) \subseteq I[t] \quad \Longleftrightarrow \quad H \cdot a_i \subseteq I, \; \forall \, i$$
$$\Longleftrightarrow \quad a_i \in (I:H), \; \forall \, i \quad \Longleftrightarrow \quad p(t) \in (I:H)[t].$$
By a theorem of Amitsur (see \cite[Theorem 5.10]{L1}), $J(A[t]) = N[t]$ for some nil ideal $N$ of $A$, so
$$J_H(A[t]) = (J(A[t]):H) = (N[t]:H) = (N:H)[t] = 0$$
as observed above. Therefore $A[t] \cong A \ts_{\kk} \kk[t]$ is an $H$-semiprimitive partial $H$-module algebra which satisfies a polynomial identity (see \cite[Theorem 6.1.1]{R}).

It follows from Theorem \ref{Hsemiprimitiv} that $R := \psmash{A[t]}{H}$ is $\dual{H}$-semiprimitive. Moreover, $R$ satisfies a polynomial identity, because $R$ is finitely generated as $A[t]$-module (see \cite[Corollary 13.4.9]{MR}). Since $\dual{H}$ is cosemisimple, it follows from \cite[Corollary 3.2]{LM} that if $\mc{I}$ is a nilpotent ideal of $R$, then $\dual{H} \cdot \mc{I} \subseteq J(R)$, so $\mc{I} \subseteq (J(R):\dual{H}) = J_{\dual{H}}(R) = 0$. Thus $R = \psmash{A[t]}{H} \cong \psmash{A}{H}[t]$ is semiprime and therefore $\psmash{A}{H}$ is also semiprime (see \cite[Proposition 10.18]{L1}).
\end{proof}

\esp

\begin{cor} \label{semiprimal2}
Let $H$ be a semisimple Hopf algebra and let $A$ be a partial $H$-module algebra satisfying a polynomial identity. Then
$$P \! \paren{\psmash{A}{H}} = \psmash{P_H(A)}{H}.$$
\end{cor}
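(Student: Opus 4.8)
The plan is to obtain the corollary by assembling Theorem \ref{semiprimal1} with the identifications of the $H$-prime radicals already proved in Section \ref{Hradicais}. Write $R := \psmash{A}{H}$. Since a semisimple Hopf algebra is finite-dimensional, all the results of Section \ref{Hradicais} that require $H$ finite-dimensional are at our disposal, and $R$ carries a global $\dual{H}$-action with $R^{\dual{H}} = A$; so Corollary \ref{radprimoeHradprimo} may be applied both to the partial $H$-module algebra $A$ and to the (global) $\dual{H}$-module algebra $R$.

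First I would establish the inclusion $\psmash{P_H(A)}{H} \subseteq P(R)$. By Theorem \ref{HsemiprimoeHcosemiprimo} we have $\psmash{P_H(A)}{H} = P_{\dual{H}}(R)$, and Corollary \ref{radprimoeHradprimo} applied to $R$ with the Hopf algebra $\dual{H}$ gives $P_{\dual{H}}(R) = \paren{P(R):\dual{H}} \subseteq P(R)$. Note this inclusion does not even use the PI hypothesis.

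For the reverse inclusion I would pass to the quotient $\bar{A} := A/P_H(A)$. Since $P_H(A) = \Hrz{0}$ is an $H$-stable ideal (Proposition \ref{caracHradical}), the partial action of $H$ descends to $\bar{A}$, and since $P_H(A)$ is an $H$-semiprime ideal (Corollary \ref{HradmenorHsemiprimo}), $\bar{A}$ is an $H$-semiprime partial $H$-module algebra; moreover $\bar{A}$ still satisfies a polynomial identity, being a homomorphic image of $A$. Hence Theorem \ref{semiprimal1} applies to $\bar{A}$ and yields that $\psmash{\bar{A}}{H}$ is semiprime. Invoking the algebra isomorphism $\psmash{\bar{A}}{H} \cong R/\paren{\psmash{P_H(A)}{H}}$ recorded before Proposition \ref{ProdSubdirSmash} (with $I = P_H(A)$), we deduce that $R/\paren{\psmash{P_H(A)}{H}}$ is semiprime, so its prime radical vanishes; equivalently $P(R) \subseteq \psmash{P_H(A)}{H}$. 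Combining the two inclusions gives $P\!\paren{\psmash{A}{H}} = \psmash{P_H(A)}{H}$.

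The only points needing care — and the closest thing to a genuine obstacle — are the verifications that $\bar{A}$ really satisfies the hypotheses of Theorem \ref{semiprimal1}: that the induced map on $\bar{A}$ is a well-defined partial $H$-action (immediate once $P_H(A)$ is known to be $H$-stable), that $\bar{A}$ is $H$-semiprime (which amounts to $P_H(\bar{A}) = 0$, following from $\Hrz{\Hrz{0}} = \Hrz{0}$ via the correspondence between $H$-stable ideals of $\bar{A}$ and $H$-stable ideals of $A$ containing $P_H(A)$), and that the displayed quotient of smash products is an isomorphism of algebras. All of these are supplied by earlier results in the excerpt, so the proof is essentially a bookkeeping assembly rather than a new argument.
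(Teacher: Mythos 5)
Your proposal is correct and follows essentially the same route as the paper: the inclusion $\psmash{P_H(A)}{H} = P_{\dual{H}}\!\paren{\psmash{A}{H}} \subseteq P\!\paren{\psmash{A}{H}}$ via Theorem \ref{HsemiprimoeHcosemiprimo}, and the reverse inclusion by applying Theorem \ref{semiprimal1} to the $H$-semiprime PI-algebra $A/P_H(A)$ together with the isomorphism $\paren{\psmash{A}{H}}/\paren{\psmash{P_H(A)}{H}} \cong \psmash{(A/P_H(A))}{H}$. Your extra justification of the first inclusion via Corollary \ref{radprimoeHradprimo} and the explicit verification of the hypotheses for $A/P_H(A)$ are fine and merely spell out what the paper leaves implicit.
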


\begin{proof}
By Theorem \ref{HsemiprimoeHcosemiprimo}, $\psmash{P_H(A)}{H} = P_{\dual{H}} \!\! \paren{\psmash{A}{H}} \subseteq P \! \paren{\psmash{A}{H}}$. On the other hand, $A/P_H(A)$ is $H$-semiprime (and satisfies a polynomial identity). It follows from Theorem \ref{semiprimal1} that the factor algebra $\paren{\psmash{A}{H}}/ \big( \psmash{P_H(A)}{H} \big) \cong \psmash{(A/P_H(A))}{H}$ is semiprime, so also $P \! \paren{\psmash{A}{H}} \subseteq \psmash{P_H(A)}{H}$.
\end{proof}
\esp


The hypothesis ``$H$-semiprimitive'' or ``$H$-semiprime'' in the above theorems are essentials (by Theorems \ref{HsemiprimoeHcosemiprimo} and \ref{Hsemiprimitiv}, Corollary \ref{radprimoeHradprimo} and Proposition \ref{HJradical2}). Also, it is clear that the semisimplicity of $H$ is a
necessary condition for the Question \ref{qsemiprimit} in the introduction, because if $H$ acts trivially on $A$, then $A\# H = A\otimes H$ is not semiprime if $H$ is not semisimple.


\esp

\begin{ex}
Let $B$, $A$ and $H$ be as in \emph{Example \ref{AcParc1}}. If $B$ is finite-dimensional and semiprimitive, then so is $A$. In particular, every irreducible $A$-module is finite-dimensional. By \emph{Theorem \ref{Semiprimitiv1}}, the partial smash product $\psmash{A}{H}$ is semiprimitive. This also follows from \emph{Theorem \ref{semiprimal1}} since, for finite-dimensional algebra, the Jacobson radical coincides with the prime radical.
\end{ex}

\esp

{\bf Acknowledgments.}  The authors would like to thank  Antonio Paques who gave us the definition of left partial $(A,H)$-module.


\bibliographystyle{plain}

\bibliography{Bibliografia}

\end{document}